
\documentclass[11pt]{article}
\usepackage{latexsym}
\usepackage{mathrsfs}
\usepackage{indentfirst}
\usepackage{amsmath,amsthm, amssymb}
\usepackage{graphicx}

\usepackage{color}

\usepackage{comment}

\topmargin -15mm
\oddsidemargin -10mm
\evensidemargin -10mm
\textwidth 160mm
\textheight 250mm
\newtheorem{theorem}{Theorem}[section]
\newtheorem{lemma}[theorem]{Lemma}
\newtheorem{corollary}[theorem]{Corollary}

\setlength{\parindent}{1em}

\newtheorem{claim}{Claim}
\newtheorem{observation}{Observation}

\begin{document}
\title{Directed Hamilton cycles in digraphs and matching alternating Hamilton cycles in bipartite graphs
\thanks{Research supported by National Natural Science Foundation of China (10801077 and 11201158), Natural Science Foundation of Guangdong Province, China (9451030007003340) and the Natural Science
Foundation of the Jiangsu Higher Education Institutions of China (08KJB110008).}}
\author{Zan-Bo Zhang$^1$, Xiaoyan Zhang$^{2,3}$\thanks{Corresponding author. Email address:
royxyzhang@gmail.com.}, Xuelian Wen$^4$
\\ \small $^1$Department of Computer Engineering, Guangdong Industry Technical College, \\ \small Guangzhou 510300,
China
\\ \small $^2$School of Mathematical Science and Institute of Mathematics, Nanjing Normal University, \\ \small Nanjing 210023, China
\\ \small $^3$Faculty of Electrical Engineering, Mathematics and Computer Science, University of Twente,
\\ \small P.O. Box 217, 7500 AE Enschede, The Netherlands
\\ \small $^4$School of Economics and Management, South China Normal University,
\\ \small Higher education mega center, Panyu, Guangzhou 510006, China}

\date{}
\maketitle
\begin{abstract}
In 1972, Woodall raised the following Ore type condition for
directed Hamilton cycles in digraphs: Let $D$ be a digraph. If for
every vertex pair $u$ and $v$, where there is no arc from $u$ to
$v$, we have $d^+(u)+d^-(v)\geq |D|$, then $D$ has a directed
Hamilton cycle. By a correspondence between bipartite graphs and
digraphs, the above result is equivalent to the following result
of Las Vergnas:
Let $G = (B,W)$ be a balanced bipartite graph. If for any $b \in
B$ and $w \in W$, where $b$ and $w$ are nonadjacent, we have $d(w)
+d(b) \geq |G|/2 + 1$, then every perfect matching of $G$ is
contained in a Hamilton cycle.

The lower bounds in both results are tight. In this paper, we
reduce both bounds by $1$, and prove that the conclusions still
hold, with only a few exceptional cases that can be clearly
characterized.
$\newline$ $\newline$\noindent\textbf{Key words}: degree sum,
 Matching alternating Hamilton cycle, Hamilton cycle.
\end{abstract}
\section{Introduction}
Hamiltonian problems, and their many variations, have been studied
extensively for more than half a century.
The readers could refer to the surveys of Gould (\cite{Gould1991}
and \cite{Gould2003}), Kawarabayashi (\cite{Kaw2001}) and Broersma
(\cite{Broersma2002}) to trace the development in this field.
Recently, approximate solutions of many traditional Hamiltonian
problems and conjectures in digraphs came forth (\cite{KKO2008},
\cite{KKO2009}, \cite{CKKO2010} and \cite{KOT2010}), which are
surveyed by K\"{u}hn and Osthus (\cite{KuhOstpreprint}).

Hamiltonicity and related properties are also important in
practical applications. For example, in network design, the
existence of Hamilton cycles in the underlying topology of an
interconnection network provide advantage for the routing
algorithm to make use of a ring structure, while the existence of
a hamiltonian decomposition allows the load to be equally
distributed, making network robust (\cite{BDDP1998}). 

There are lots of degree or degree sum conditions for
hamiltonicity. Often, the lower bounds in such conditions are best
possible. However, we could still reduce the bounds and try to
identify all exceptional graphs, that is, the extremal graphs for
the conditions. Such kind of research often leads to the discovery
of interesting topology structures. In this paper, we apply this
idea to Woodall's condition for the existence of directed Hamilton
cycles in digraphs.
\section{Terminology, notations and preliminary results}

In this paper we consider finite, simple and connected graphs, and
finite and simple digraphs. For the terminology not defined in
this paper, the reader is referred to \cite{BM1976} and
\cite{BG2001}.

Let $G$ be a graph with vertex set $V(G)$ and edge set $E(G)$. We
denote by $\nu$ or $|G|$ the order of $V(G)$. For $u\in V(G)$, we
denote by $d(u)$ the degree of $u$, and $N(u)$ or $N_G(u)$ the set
of neighbors of $u$ in $G$. For a subgraph $H$ of $G$ and a vertex
$u\in V(G-H)$, we also denote by $N_{H}(u)$ the set of
neighbors of $u$ in $H$. For any two disjoint vertex sets $X$, $Y$
of $G$ we denote by $e(X,Y)$ the number of edges of $G$ from $X$
to $Y$. For $u, v\in V(G)$, we denote by $d(u,v)$ the distance
between $u$ and $v$, that is, the length of the shortest path
connecting $u$ and $v$. By $uv+$ ($uv-$) we mean the vertices $u$
and $v$ are adjacent (nonadjacent). If a vertex $u$ sends (no)
edges to $X$, where $X$ is a subgraph or a vertex subset of $G$,
we write $u\rightarrow X$ ($u\nrightarrow X$). By $nK_2$, we
denote a graph consisting of $n$ independent edges.

Let $D$ be a digraph with vertex set $V(D)$ and arc set $A(D)$,
$u$, $v$ and $w$ distinct vertices of $D$. We denote by $|D|$ the
order of $V(D)$, $d^+(u)$ and $d^-(u)$ the out-degree and
in-degree of $u$, respectively. The degree of $u$ is the sum of
its out-degree and in-degree. The minimum out-degree and in-degree
of the vertices in $D$, is denoted by $\delta^+(D)$ and
$\delta^-(D)$. We let $\delta^0(D)=min\{\delta^+(D),\
\delta^-(D)\}$. Let $(u,v)$ denote an arc from $u$ to $v$. If
$(u,v)\in A(D)$ or $(v,u)\in A(D)$, we say that $u$ and $v$ are
adjacent. If $(w,u)\in A(D)$ and $(w,v)\in A(D)$, then we say that
the pair $\{u,v\}$ is dominated, if $(u,w)\in A(D)$ and $(v,w)\in
A(D)$, then we say that the pair $\{u,v\}$ is dominating. The
complete digraph on $n\geq 1$ vertices, denoted by
$\overleftrightarrow{K}_n$, is obtained from the complete graph
$K_n$ by replacing every edge $xy$ with two arcs $(x, y)$ and $(y,
x)$. Without causing ambiguity, we use $I_n$ to denote a graph or
a digraph consisting of $n$ independent vertices. A
\emph{transitive tournament} is an orientation of complete graph
for which the vertices can be numbered in such a way that $(i,j)$
is an edge if and only if $i<j$.

Let $C=u_{0}u_{1}\ldots u_{m-1}u_{0}$ be a cycle in a graph $G$.
Throughout this paper, the subscript of $u_{i}$ is reduced modulo
$m$. We always orient $C$ such that $u_{i+1}$ is the successor of
$u_{i}$.
For $0\leq i,j\leq m-1$, the path $u_{i}u_{i+1}\ldots u_{j}$ is
denoted by $u_{i}C^{+}u_{j}$, while the path $u_{i}u_{i-1}\ldots
u_{j}$ is denoted by $u_{i}C^{-}u_{j}$. For a path
$P=v_{0}v_{1}\ldots v_{p-1}$ and $0\leq i,j\leq p-1$, the segment
of $P$ from $v_{i}$ to $v_{j}$ is denoted by $v_{i}Pv_{j}$.

A \emph{matching} $M$ of $G$ is a subset of $E(G)$ in which no two
elements are adjacent. If every $v\in V(G)$ is covered by an edge
in $M$ then $M$ is said to be a \emph{perfect matching} of $G$.
For a matching $M$, an \emph{$M$-alternating path}
(\emph{$M$-alternating cycle}) is a path (cycle) of which the
edges appear alternately in $M$ and $E(G)\backslash M$. We call an
edge in $M$ or an $M$-alternating path starting and ending with
edges in $M$ a \emph{closed $M$-alternating path}, while an edge
in $E(G)\backslash M$ or an $M$-alternating path starting and
ending with edges in $E(G)\backslash M$ an \emph{open
$M$-alternating path}. $\newline$

The following results of Dirac and Ore for the existence of
Hamilton cycles in graphs are basic and famous.
\begin{theorem} (Dirac, 1952 \cite{Dirac1952})
If $G$ is a simple graph with $|G|\geq 3$ and every vertex of $G$
has degree at least $|G|/2$, then $G$ has a Hamilton cycle.
\end{theorem}

\begin{theorem} (Ore, 1960 \cite{Ore1960})
Let $G$ be a simple graph. If for every distinct nonadjacent vertices
$u$, $v$ of $G$, we have $d(u) + d(v) \geq |G|$, then $G$ has a
Hamilton cycle.
\end{theorem}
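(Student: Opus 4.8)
The plan is to argue by contradiction, following the classical maximal-counterexample method used in Bondy--Murty \cite{BM1976}. We may assume $|G| \geq 3$, since a Hamilton cycle needs at least three vertices. Suppose $G$ satisfies the degree condition but has no Hamilton cycle. Adding an edge can only raise degrees, so the hypothesis is preserved under edge addition; hence we may pass to an edge-maximal counterexample, a graph $G$ that is itself non-Hamiltonian but for which $G + uv$ is Hamiltonian for every pair of nonadjacent vertices $u,v$. Since $K_{|G|}$ is Hamiltonian, $G$ is not complete, so such a nonadjacent pair $u,v$ exists.

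First I would extract a Hamiltonian path. Because $G + uv$ has a Hamilton cycle while $G$ does not, every Hamilton cycle of $G + uv$ must traverse the new edge $uv$; deleting that edge leaves a Hamiltonian path $P = v_1 v_2 \cdots v_n$ of $G$ with $v_1 = u$ and $v_n = v$, where $n = |G|$.

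The heart of the argument is a crossing/pigeonhole step. Define the index sets
\[
S = \{\, i : 1 \le i \le n-1,\ v_1 v_{i+1} \in E(G) \,\}, \qquad
T = \{\, i : 1 \le i \le n-1,\ v_n v_i \in E(G) \,\}.
\]
Then $|S| = d(u)$ and $|T| = d(v)$, and both sets lie inside $\{1,\ldots,n-1\}$. If some index $i$ lies in $S \cap T$, then $v_{i+1}$ is a neighbor of $v_1$ and $v_i$ is a neighbor of $v_n$, so
\[
v_1 v_2 \cdots v_i\, v_n v_{n-1} \cdots v_{i+1}\, v_1
\]
is a Hamilton cycle of $G$, contradicting the choice of $G$. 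Hence $S \cap T = \emptyset$, and since both are subsets of an $(n-1)$-element set,
\[
d(u) + d(v) = |S| + |T| = |S \cup T| \le n - 1 < n,
\]
which contradicts the degree hypothesis applied to the nonadjacent pair $u,v$. This contradiction shows $G$ must have a Hamilton cycle.

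The step I expect to be the crux is verifying that the reversed segment $v_n v_{n-1} \cdots v_{i+1}$ genuinely closes $P$ into a \emph{spanning} cycle: one must check that the two crossing edges $v_1 v_{i+1}$ and $v_i v_n$ reconnect the two pieces of $P$ with no vertex omitted or repeated, and confirm that the degenerate index $i = n-1$ cannot occur in $S$ (it would force $v_1 v_n \in E(G)$, contradicting that $u,v$ are nonadjacent). Once this rerouting is set up correctly, the pigeonhole count is immediate. An alternative route that avoids the maximality reduction is to take a longest path directly, first showing $G$ is connected (a nonadjacent pair in distinct components would violate the degree bound) and then applying the same crossing argument; I would fall back on this if the maximal-graph formulation proved awkward.
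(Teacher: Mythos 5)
Your proof is correct: it is the classical edge-maximal counterexample argument (as in Bondy--Murty), with the crossing/pigeonhole step and the degenerate index $i=n-1$ handled properly. Note, however, that the paper does not prove this statement at all --- Ore's theorem appears there only as a cited preliminary result (\cite{Ore1960}), so there is no proof in the paper to compare against; your argument stands on its own as the standard one.
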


Below are some of their digraph versions.

\begin{theorem} \label{theorem:Ghouila-Houri} (Ghouila-Houri, 1960
\cite{Ghouila1960}) Let $D$ be a strong digraph. If the degree of
every vertex of $D$ is at least $|D|$, then $D$ has a directed
Hamilton cycle.
\end{theorem}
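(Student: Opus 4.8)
The plan is to argue by contradiction using a longest directed cycle. Suppose $D$ is strong with $d(v)\geq |D|$ for every vertex $v$, yet $D$ has no directed Hamilton cycle. Write $n=|D|$ and let $C=u_0u_1\cdots u_{k-1}u_0$ be a longest directed cycle in $D$, oriented so that $u_{i+1}$ is the successor of $u_i$. Since $C$ is not Hamiltonian, $k<n$ and the set $R=V(D)\setminus V(C)$ is nonempty. Because $D$ is strong, there are arcs both leaving and entering $R$, so among all directed paths that start on $C$, run through $R$, and return to $C$ with every internal vertex in $R$, I can choose one, say $P=u_i\,x_1x_2\cdots x_r\,u_j$ with $x_1,\dots,x_r\in R$ and $r\geq 1$, having the maximum possible number of internal vertices.

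First I would extract the structural consequences of the two maximality choices. Splicing $P$ into $C$ in place of the forward segment $u_iC^{+}u_j$ yields the directed cycle $u_i\,P\,u_j\,C^{+}u_i$; since $C$ is a longest cycle, its length $(r+1)+(k-\ell)$ cannot exceed $k$, where $\ell$ denotes the forward length of $u_iC^{+}u_j$ along $C$, which forces $\ell\geq r+1$. Thus the two endpoints of $P$ are far apart on $C$. From the maximality of the number of internal vertices of $P$, every in-neighbor of $x_1$ and every out-neighbor of $x_r$ lying in $R$ must already lie on $P$; together with strong connectivity this confines the external neighbors of each $x_t$ to $V(C)\cup V(P)$.

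The heart of the argument is a counting step that converts the maximality of $C$ into an arc deficit. I would show that an internal vertex of $P$ cannot be attached to the cycle across a short gap without producing a directed cycle longer than $C$: roughly, if $x_t\to u_a$ and $u_b\to x_{t'}$ with $u_a,u_b$ close together on $C$ in the appropriate orientation, then rerouting through $x_1,\dots,x_r$ beats $k$. Formalizing this gives, for each vertex (or each consecutive pair) of $C$, an upper bound on how many of the arcs incident to the internal vertices of $P$ it can absorb. Summing the degrees $d(x_1)+\cdots+d(x_r)$ (together, if needed, with the degree of a suitably chosen endpoint on $C$) and comparing with the number of admissible arcs into $V(C)\cup V(P)$, I expect to obtain $\sum_{t}d(x_t)<r\,n$, contradicting $d(x_t)\geq n$ for every $t$.

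The main obstacle is precisely this final counting: translating \emph{``$C$ is longest''} into a sharp, simultaneously valid bound on the arcs between the off-cycle path $P$ and the cycle $C$, and organizing the bookkeeping so that the accumulated deficit strictly beats the threshold $n$. Strong connectivity must be used carefully and enters twice, once to guarantee that the detour path $P$ exists at all, and once to exclude degenerate configurations in which $R$ attaches to $C$ on only one side; the hypothesis $d(v)\geq n$ is what finally closes the gap after the admissible arcs have been counted.
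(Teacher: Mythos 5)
The paper does not actually prove this statement: Ghouila-Houri's theorem is quoted there as a classical background result, with a citation and no argument, so your attempt can only be judged on its own merits. Judged that way, it has a genuine gap, and the gap sits exactly where you planted your flag: the ``final counting'' is not a bookkeeping chore to be smoothed out in the write-up, it \emph{is} the theorem. Everything before it (a longest cycle $C$, a $C$-to-$C$ path $P$ through $R=V(D)\setminus V(C)$, the observation that $\ell\geq r+1$) is the easy, standard setup common to all proofs in this area; everything after is a promissory note (``Formalizing this gives\ldots'', ``I expect to obtain $\sum_t d(x_t)<rn$'') with no derivation. And the derivation cannot be the naive one: with a total-degree hypothesis $d^+(v)+d^-(v)\geq n$, the insertion argument for a single vertex $x\notin V(C)$ only yields $d_C(x)\leq |C|$ (from ``not both $u_i\to x$ and $x\to u_{i+1}$''), hence $d(x)\leq |C|+2(n-|C|-1)$, which is perfectly compatible with $d(x)\geq n$ whenever $|C|\leq n-2$. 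The missing degree must be recovered by controlling arcs \emph{inside} $R$, and that is precisely what your sketch cannot do.

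Concretely, your confinement claim is unjustified and in general false. Maximality of the number of internal vertices of $P=u_ix_1\cdots x_ru_j$ forbids nothing about arcs between an internal vertex $x_t$ with $1<t<r$ and $R\setminus V(P)$: the path $u_ix_1\cdots x_ty$ does not return to $C$, and even if $y$ has an out-arc to $C$, the rerouted path has only $t+1$ internal vertices, which beats $r$ only when $t=r$. Even for $x_1$, absorbing a new in-neighbor $y\in R$ requires an arc from $C$ to $y$, which strong connectivity alone does not supply (the route from $C$ to $y$ may pass through $P$ itself). So $\sum_t d(x_t)$ can contain many arcs within $R$ that your admissible-arc count never sees, and the target inequality $\sum_t d(x_t)<rn$ does not follow from the bookkeeping you describe. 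Closing this gap is exactly what the known proofs add: either Ghouila-Houri's original induction (analyzing strong components after deleting a vertex of small out- or in-degree), or the Bondy--Thomassen multi-insertion lemma used to prove Meyniel's theorem (degree sum at least $2n-1$ for nonadjacent pairs), of which this statement is an immediate corollary. Your outline reproduces the setup of those proofs and stops at the step that makes them work.
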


\begin{theorem} \label{theorem:BanGut2001} (\cite{BG2001}, Corollary 5.6.3)
If $D$ is a digraph with $\delta^0(D)\geq |D|/2$, then $D$ has a
directed Hamilton cycle.
\end{theorem}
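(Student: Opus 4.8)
The plan is to deduce this statement from the Ghouila-Houri theorem (Theorem~\ref{theorem:Ghouila-Houri}), which is already at our disposal. Write $n=|D|$. The hypothesis $\delta^0(D)\geq n/2$ says exactly that $d^+(v)\geq n/2$ and $d^-(v)\geq n/2$ for every vertex $v$, so the total degree satisfies $d(v)=d^+(v)+d^-(v)\geq n$ automatically. Hence the degree requirement of Theorem~\ref{theorem:Ghouila-Houri} is free of charge, and the whole problem collapses to verifying its only other hypothesis, namely that $D$ is strongly connected. So the real content of the proof is to establish strong connectivity from $\delta^0(D)\geq n/2$.

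To show $D$ is strong I would argue by contradiction through its strong components. First I would check that $D$ is even weakly connected: if $D$ had two distinct weak components, then every vertex would send all of its out-arcs into its own component, forcing each component to have order at least $n/2+1$, which two disjoint sets cannot both achieve. Granted weak connectivity, I would then pass to the acyclic condensation of $D$ into strong components; if $D$ were not strong this condensation would have at least two vertices and, being acyclic and weakly connected, would contain a source component $S$ (receiving no arc from outside) and a sink component $T$ (sending no arc to outside), with $S\neq T$. A vertex $u\in S$ has all its in-neighbours inside $S$, so $n/2\leq d^-(u)\leq |S|-1$ and thus $|S|\geq n/2+1$; symmetrically every $w\in T$ has all out-neighbours inside $T$, giving $|T|\geq n/2+1$. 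Since $S$ and $T$ are disjoint, $|S|+|T|\geq n+2>n$, a contradiction. Therefore the condensation is a single vertex and $D$ is strong.

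With strong connectivity in hand and the degree bound $d(v)\geq n$ already noted, Theorem~\ref{theorem:Ghouila-Houri} applies directly and produces the desired directed Hamilton cycle. The only delicate point in the whole argument is the strong-connectivity step, and within it the one place deserving care is the claim that the source and sink components of the condensation are genuinely distinct; this is precisely where weak connectivity and acyclicity of the condensation are used, and it is what makes the two size estimates $|S|\geq n/2+1$ and $|T|\geq n/2+1$ refer to disjoint sets. (One could instead give a self-contained Dirac-style proof by taking a longest directed cycle and deriving a contradiction from the out/in-degree bounds, but the reduction to Theorem~\ref{theorem:Ghouila-Houri} is cleaner and reuses a result already established.)
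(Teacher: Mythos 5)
Your proposal is correct: the degree bound $d(v)=d^+(v)+d^-(v)\geq |D|$ follows immediately from $\delta^0(D)\geq |D|/2$, and your condensation argument for strong connectivity is sound (a source component $S$ and a sink component $T$ of the condensation are distinct and must each contain at least $|D|/2+1$ vertices, which is impossible since they are disjoint), after which Theorem~\ref{theorem:Ghouila-Houri} applies. Note that the paper itself gives no proof of this statement --- it is quoted as Corollary 5.6.3 of \cite{BG2001} --- so there is nothing internal to compare against; your reduction to Ghouila-Houri is the standard derivation and is essentially how the cited book obtains the result. One small remark: the preliminary step establishing weak connectivity is not strictly needed, since in any acyclic digraph on at least two vertices the first and last vertices of a topological ordering already give a source and a sink that are distinct; but including it does no harm and makes the distinctness of $S$ and $T$ transparent.
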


\begin{theorem} \label{theorem:Woodall} (Woodall, 1972 \cite{Woodall1972})
Let $D$ be a digraph. If for every vertex pair $u$ and $v$, where
there is no arc from $u$ to $v$, we have $d^+(u)+d^-(v)\geq |D|$,
then $D$ has a directed Hamilton cycle.
\end{theorem}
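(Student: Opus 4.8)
The plan is to prove the theorem by a longest-cycle argument, after first reducing to the strongly connected case. Throughout write $n=|D|$, and call the hypothesis ``the degree condition''. I would first establish that $D$ is strong, then show that a longest directed cycle must already span $V(D)$; if it did not, a carefully chosen ordered pair of vertices would violate the degree condition.

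To see that $D$ is strong, suppose otherwise and consider the condensation of $D$ into its strongly connected components, linearly ordered as $S_1,\dots,S_t$ with $t\ge 2$, so that every inter-component arc runs from a lower to a higher index. Pick $u\in S_t$ and $v\in S_1$. No arc leaves the last component $S_t$ and none enters the first component $S_1$ from outside, so $d^+(u)\le |S_t|-1$ and $d^-(v)\le |S_1|-1$; moreover there is no arc from $u$ to $v$. Hence $d^+(u)+d^-(v)\le |S_t|+|S_1|-2\le n-2<n$, contradicting the degree condition. Thus $D$ is strong.

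Now let $C=c_0c_1\cdots c_{k-1}c_0$ be a longest directed cycle and suppose for contradiction that $k<n$, so $R=V(D)\setminus V(C)\ne\varnothing$. The engine of the argument is the \emph{insertion obstruction}: since $C$ is longest, for any path $y_0y_1\cdots y_s$ with all vertices in $R$ and any index $j$, one cannot have both $(c_{j-1},y_0)\in A(D)$ and $(y_s,c_j)\in A(D)$, as together they would splice the $R$-path into $C$ between $c_{j-1}$ and $c_j$ and lengthen it. I would take $P=y_0y_1\cdots y_s$ to be a longest path inside the induced subdigraph $D[R]$. Because $P$ is longest in $R$, every out-neighbour of $y_s$ in $R$ and every in-neighbour of $y_0$ in $R$ lies on $P$; because $C$ is longest, the insertion obstruction forces the positions $j$ with $(y_s,c_j)\in A(D)$ to be disjoint from the positions $j$ with $(c_{j-1},y_0)\in A(D)$, which gives $d^+_C(y_s)+d^-_C(y_0)\le k$. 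The aim is then to combine this with a refined count of the $R$-contributions so that, when $(y_s,y_0)$ is \emph{not} an arc, the total $d^+(y_s)+d^-(y_0)$ is forced below $n$, contradicting the degree condition; the remaining case, when $(y_s,y_0)$ is an arc, produces a directed cycle inside $R$, and there I would use strong connectivity to locate arcs between $C$ and this $R$-cycle and merge the two disjoint cycles into a longer one, again contradicting maximality of $C$.

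The hard part will be exactly the $R$-side counting and the extremal analysis, not the setup. The naive confinement bound on the $R$-neighbours of $y_s$ and $y_0$ only yields $d^+(y_s)+d^-(y_0)\le k+2s$, which exceeds $n=k+|R|$ in general, so one cannot simply add the two contributions; the directed setting blocks the usual Ore-style reversal that would close a path through $R$ into a spanning cycle, and a more delicate rerouting is needed to push the $R$-contribution down and, when equality $d^+(y_s)+d^-(y_0)=n$ occurs, to extract a longer cycle from the rigid ``every cyclic gap on $C$ is covered by $y_s$ or $y_0$'' structure. This saturated case is precisely where the bound $n$ is tight, and—looking ahead to the paper's improvement—where the exceptional digraphs for the weakened bound $n-1$ will be isolated.
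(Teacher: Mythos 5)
Your outline is not yet a proof: the strong-connectivity reduction and the insertion obstruction $d^+_C(y_s)+d^-_C(y_0)\le k$ are correct, but everything after that---which you yourself label ``the aim'' and ``the hard part''---is exactly the content of the theorem, and it is missing. The idea you need is a refinement of the $C$-side bound, not of the $R$-side bound. Among all $c_b\in N^-_C(y_0)$ and $c_a\in N^+_C(y_s)$, choose the pair for which the forward segment of $C$ from $c_b$ to $c_a$ is shortest, and let $g$ be the number of its interior vertices. Splicing $P$ into this gap would produce a cycle on $k-g+(s+1)$ vertices, so maximality of $C$ forces $g\ge s+1$; minimality of the gap forces its interior to contain no vertex of $N^-_C(y_0)\cup N^+_C(y_s)$. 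Repeating your position-by-position count outside the gap then improves the bound to $d^+_C(y_s)+d^-_C(y_0)\le k-s$. Combined with $d^+_R(y_s)\le s-1$ and $d^-_R(y_0)\le s$ (here one uses $(y_s,y_0)\notin A(D)$ and the maximality of $P$ in $D[R]$), this gives $d^+(y_s)+d^-(y_0)\le k+s-1\le k+|R|-2=n-2<n$, the contradiction you were after. This is precisely the second assertion of Lemma \ref{lemma:CycleToPath} of the paper, stated there in the equivalent bipartite language. You would also still have to treat the degenerate cases $N^+_C(y_s)=\emptyset$ or $N^-_C(y_0)=\emptyset$, where the gap argument cannot even start and the crude bound $2s-1$ need not be smaller than $n$; the paper handles these by applying the obstruction to \emph{subpaths} of $P$, which yields a vertex of $C$ of small degree that is nonadjacent (in the required direction) to an end of $P$, and then invoking the degree condition on that pair.

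Your second branch is also broken as stated. When $(y_s,y_0)\in A(D)$ you propose to use strong connectivity to ``merge'' $C$ with the resulting cycle $C_1$ inside $R$; but two vertex-disjoint cycles joined by arcs in both directions do not in general merge into a cycle longer than $C$: any merged cycle uses only one segment of $C$ and one segment of $C_1$, and can easily be shorter than $C$. What actually works is again a counting statement---for every arc $(c_{i-1},c_i)$ of $C$, it cannot happen that both $c_{i-1}$ has an out-neighbour on $C_1$ and $c_i$ has an in-neighbour on $C_1$ (this is Lemma \ref{lemma:CycleToCycle})---followed by an application of the degree condition to a suitable pair consisting of one vertex of $C$ and one vertex of $C_1$. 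Finally, for the record: the paper never proves Theorem \ref{theorem:Woodall} directly; it cites it and observes that, under the matching-contraction correspondence, it is equivalent to Las Vergnas' Theorem \ref{theorem:Las}. But the machinery of Section 4---Claim \ref{|G|/2+1} together with Lemmas \ref{lemma:CycleToPath} and \ref{lemma:CycleToCycle}, developed there for the stronger bound $|D|-1$---is exactly the set of tools your outline identifies as needed but does not supply.
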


It is not hard to verify  that the bounds in above theorems are
tight. Nash-Williams \cite{Nash-Williams1969} raised the problem
of describing all the extremal digraphs in Theorem
\ref{theorem:Ghouila-Houri}, that is, all digraphs with minimum
degree at least $|D|-1$, who do not have a directed Hamilton
cycle. As a partial solution to this problem, Thomassen proved a
structural theorem on the extremal graphs.
\begin{theorem} \label{theorem:Thomassen} (Thomassen, 1981 \cite{Thomassen1981})
Let $D$ be a strong non-Hamiltonian digraph, with minimum degree
$|D|-1$. Let $C$ be a longest directed cycle in $D$. Then any two
vertices of $D-C$ are adjacent, every vertex of $D-C$ has degree
$|D|-1$ (in $D$), and every component of $D-C$ is complete.
Furthermore, if $D$ is strongly $2$-connected, then $C$ can be
chosen such that $D-C$ is a transitive tournament.
\end{theorem}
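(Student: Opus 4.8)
The plan is to work with a longest directed cycle $C=v_0v_1\cdots v_{c-1}v_0$ and the nonempty digraph $H=D-C$ (nonempty because $D$ is non-Hamiltonian while $C$ is longest), writing $n=|D|$ and $h=|H|=n-c\ge 1$; throughout, indices of $C$ are read modulo $c$. The single tool from which everything flows is the \emph{insertion lemma}: if $P$ is any directed path lying entirely in $H$, with start $a$ and end $b$ (possibly the trivial path with $a=b$), then there is no index $i$ with $v_i\to a$ and $b\to v_{i+1}$, for otherwise $v_0\cdots v_i\,P\,v_{i+1}\cdots v_0$ would be a directed cycle longer than $C$. I record two specializations: (a) for a single vertex $x\in H$, the set $\{v_{i+1}:v_i\to x\}$ is disjoint from $\{v_j:x\to v_j\}$, so that $d_C^-(x)+d_C^+(x)\le c$; and (b) if $x,y$ lie in a common strong component of $H$, so that a directed $x$--$y$ path exists inside $H$, then $\{v_{i+1}:v_i\to x\}$ is disjoint from $\{v_j:y\to v_j\}$.

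Combining (a) with the hypothesis $d(x)\ge n-1$ gives $d_H(x)=d(x)-d_C(x)\ge (n-1)-c=h-1$ for every $x\in H$, so each vertex of $H$ is incident with at least $h-1$ arcs inside $H$. The next, and genuinely hardest, step is to upgrade these averaged estimates into the exact statement of the theorem: that any two vertices of $H$ are adjacent, that each vertex of $H$ has degree exactly $n-1$, and that each strong component of $H$ is complete. For this I would argue by contradiction on a hypothetical nonadjacent pair $x,y\in H$ (the case $h=2$ already follows from $d_H(x)\ge 1$), feeding a detour from $x$ to $y$ supplied by strong connectivity of $D$ back into the insertion lemma. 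The disjointness in (b), applied across a whole strong component together with the crude count $|A(H)|\le h(h-1)$, should pin down the neighbourhoods on $C$ so tightly that any missing adjacency inside $H$, or any excess of degree beyond $n-1$, exhibits a gap $v_iv_{i+1}$ of $C$ into which a path of $H$ can be spliced, contradicting maximality. I expect the bulk of the work, and the main obstacle, to be precisely this bookkeeping: converting the many local ``no splice'' constraints into simultaneous equalities, while correctly handling the interaction between several strong components of $H$, whose condensation the argument must force to be a transitive tournament (so that the underlying graph of $H$ is complete even though distinct components are joined by arcs in one direction only).

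Finally, for the strongly $2$-connected refinement I would show that $C$ can be re-chosen so that $H=D-C$ contains no directed cycle, whence, being also complete as an underlying graph, it is a transitive tournament. The idea is that if $D-C$ contained a directed cycle $Z$, then $2$-connectivity furnishes two vertex-disjoint connections between $C$ and $Z$, entering and leaving $C$ at distinct vertices, which can be used either to absorb part of $Z$ into $C$ and produce a longer cycle, or else to replace $C$ by a cycle of the same length whose complement carries strictly fewer arcs on directed cycles. The delicate point here is to carry out the rerouting without shortening $C$ and without destroying the properties already established, so the argument should be organized as a descent on a carefully chosen parameter, such as the number of arcs of $H$ that lie on directed cycles of $H$.
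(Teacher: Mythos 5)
First, a point of reference: the paper does not prove this statement at all --- it is quoted as known background (Thomassen's theorem, \cite{Thomassen1981}), so there is no internal proof to compare yours against, and your attempt must stand on its own. Judged that way, it is a plan rather than a proof, and the plan stops exactly where the theorem begins. Your insertion lemma and its consequence $d_H(x)\ge h-1$ for every $x\in D-C$ are correct and are indeed the natural opening move, but every assertion the theorem actually makes --- that any two vertices of $D-C$ are adjacent, that each such vertex has degree exactly $|D|-1$, and that each component of $D-C$ is complete --- is what you yourself label ``the genuinely hardest step'' and ``bookkeeping'' that you ``expect'' to go through, without carrying it out. The gap is not cosmetic. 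The bound $d_H(x)\ge h-1$ cannot by itself yield adjacency, since a vertex can accumulate degree $h-1$ inside $H$ via double arcs to fewer than $h-1$ vertices; and your specialization (b) of the insertion lemma is unavailable precisely in the problematic configuration of two vertices lying in distinct strong components of $H$ with no directed path between them in either direction, which is where a genuine argument (routing through $C$ and playing the degree hypothesis off both vertices simultaneously) is needed. Likewise ``degree exactly $|D|-1$'' does not follow from the inequalities you state: $d_C(x)\le c$ together with the trivial $d_H(x)\le 2(h-1)$ gives only $d(x)\le |D|+h-2$, so the exact value requires the same fine structural analysis you deferred.

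The strongly $2$-connected refinement has the same status. The descent idea (absorb part of a cycle $Z\subseteq D-C$ into $C$, or exchange $C$ for an equally long cycle whose complement carries fewer arcs on directed cycles) is a plausible heuristic, but you specify no rerouting, do not verify that the exchanged cycle has the same length, and do not show that your chosen parameter strictly decreases --- you flag this yourself as ``the delicate point.'' In Thomassen's original paper this is a substantial argument, not a routine exchange. In summary: you have correctly identified the basic tool and, to your credit, correctly identified where the difficulties lie, but what is actually established is only the preliminary estimate $d_H(x)\ge h-1$; the content of the theorem remains unproved.
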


Darbinyan characterized the digraphs of even order that are
extremal for both Theorem \ref{theorem:Ghouila-Houri} and Theorem
\ref{theorem:BanGut2001}.
\begin{theorem} \label{theorem:Darbinyan} (Darbinyan, 1986 \cite{Darbinyan1986})
Let $D$ be a digraph of even order such that the degree of every
vertex of $D$ is at least $|D|-1$ and $\delta^0(D)\geq |D|/2-1$.
Then either $D$ is hamiltonian or $D$ belongs to a non-empty
finite family of non-hamiltonian digraphs.
\end{theorem}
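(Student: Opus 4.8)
The plan is to suppose $D$ is non-Hamiltonian and show that the two hypotheses, namely that every vertex has degree at least $n-1$ and $\delta^0(D)\ge n/2-1$ where $n=|D|$ is even, force $D$ into one of finitely many explicitly described families. I would first treat the case in which $D$ is not strongly connected. Passing to the acyclic condensation, pick a terminal strong component $T$ (no arcs leave it) and an initial strong component $S$ (no arcs enter it). Every vertex of $T$ sends all of its out-arcs inside $T$, so $\delta^+(D)\ge n/2-1$ gives $|T|\ge n/2$, and symmetrically $|S|\ge n/2$; since $S$ and $T$ are disjoint this forces $|S|=|T|=n/2$, the condensation to have exactly two components, and both semidegree bounds to be met with equality inside each block. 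Combined with the degree-$(n-1)$ condition this pins $S$ and $T$ down to complete digraphs $\overleftrightarrow{K}_{n/2}$ with every cross-arc directed from $S$ to $T$, a transparently non-Hamiltonian configuration that I would record as one branch of the exceptional family.

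For the strongly connected case I would lean on Thomassen's structural theorem (Theorem \ref{theorem:Thomassen}): a longest directed cycle $C$ satisfies $D-C\ne\emptyset$, every vertex of $D-C$ has degree exactly $n-1$, and every component of $D-C$ is a complete digraph. Writing $c=|C|$, the engine of the argument is extremal insertion against the maximality of $C$. At the single-vertex level, for $u\in D-C$ one cannot have $u_i\to u\to u_{i+1}$ for consecutive $u_i,u_{i+1}$ on $C$, so the index sets of the in- and out-neighbours of $u$ on $C$ obey a shift-disjointness relation; at the block level, for a complete component $H$ of $D-C$ one cannot have both $u_i\to H$ and $H\to u_{i+1}$, since a Hamilton path of $H$ could then be spliced in to lengthen $C$. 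Feeding the bounds $d^+(u),d^-(u)\ge n/2-1$ into these relations constrains the number and sizes of the components of $D-C$ and the way they attach to $C$. Crucially, the even-order hypothesis enters here: a configuration such as two complete blocks sharing a single cut vertex forces the two blocks to have equal order to meet the degree condition, which makes $n$ odd, so evenness rules such digraphs out; I would track the parity of $n/2-1$ throughout rather than invoking it only at the end.

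The step I expect to be the genuine obstacle is not the inequalities but the exhaustive narrowing, and in particular the case in which $D-C$ is a single large complete block of order close to $n/2$. Here simple insertion does not bite, because each vertex of the block spends almost all of its degree internally and has few neighbours on $C$, so the shift-disjointness relations are satisfied with slack; one must instead exploit the tightness of the degree-$(n-1)$ equality on $D-C$ together with strong connectivity to control exactly which cycle vertices the block may send to and receive from, and then verify that only the stated attachment patterns avoid a longer cycle. A related subtlety is that Thomassen's theorem only guarantees that $D-C$ is a transitive tournament under strong $2$-connectivity, so strong but not $2$-connected digraphs must be handled separately; a small separating set interacts with $\delta^0\ge n/2-1$ much as the condensation did above and should again force the two-block structure. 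Once the list of configurations is assembled, confirming that each is non-Hamiltonian (and that the family is non-empty) is routine, but doing so is what certifies the dichotomy.
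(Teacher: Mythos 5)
First, a point of comparison that is not your fault but matters for this review: the paper contains no proof of Theorem \ref{theorem:Darbinyan} at all. It is quoted from Darbinyan's 1986 paper \cite{Darbinyan1986} (in Russian) purely to contrast with the paper's own results on Woodall's condition, so your attempt has to stand on its own merits. It does not stand, for two concrete reasons. The decisive one is that what you have written is a plan whose central step is explicitly left open: in the strongly connected case --- which is where the entire content of the theorem lies --- you invoke Thomassen's structure theorem (Theorem \ref{theorem:Thomassen}) and the standard insertion and ``shift-disjointness'' inequalities, and then state that the exhaustive narrowing, in particular the case where $D-C$ is a single complete block of order close to $n/2$, is ``the genuine obstacle'' that one ``must'' overcome. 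No list of exceptional digraphs is ever produced, so neither the finiteness, nor the non-emptiness, nor the non-hamiltonicity of the family is ever established. Producing and certifying that family \emph{is} the theorem; the inequalities you do carry out are the routine part.

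The second gap is that the one case you complete, the non-strong case, is in tension with the statement you are proving, and you do not notice it. Your structural analysis there is essentially right, except that the cross-arcs are not pinned down: once $|S|=|T|=n/2$ and both are forced to be $\overleftrightarrow{K}_{n/2}$, a vertex of $S$ has $d^-=n/2-1$, so the degree condition only forces $d^+\geq n/2$, i.e.\ at least \emph{one} arc into $T$ (and symmetrically each vertex of $T$ needs only one in-arc from $S$); the all-cross-arcs digraph is just one member. More importantly, such digraphs exist for every even $n\geq 4$, and in large numbers for each $n$, so recording them as ``one branch of the exceptional family'' makes the family infinite, while the theorem asserts it is finite. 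Either the statement carries an implicit strong-connectivity hypothesis, as do Theorems \ref{theorem:Ghouila-Houri} and \ref{theorem:Thomassen} alongside which it is quoted --- in which case your Case 1 should be recognized as vacuous rather than as a source of exceptions --- or ``finite family'' must be read as ``finitely many parametrized classes,'' in which case Case 1 needs the sharper description you did not give. A correct proof has to resolve this tension at the outset; as written, your completed case contradicts the finiteness you claim to establish, and your uncompleted case is the whole theorem.
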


We study the extremal graphs of Theorem \ref{theorem:Woodall} in
this paper. Compared with Theorem \ref{theorem:Thomassen} and
Theorem \ref{theorem:Darbinyan}, we can completely determine all
the extremal graphs.

For other results on degree sum conditions for the existence of
Hamilton cycles in digraphs see \cite{BGH1996}, \cite{BGL1996},
\cite{BGY1999}, \cite{Darbinyan1986}, \cite{Darbinyan1998},
\cite{Manoussakis1992}, \cite{Meyniel1973}, \cite{ZM1991},
\cite{ZZ1999}, and a good summary in chapter 5 of \cite{BG2001}.
$\newline$

Another interesting aspect of directed Hamilton cycle problems is
their connection with the problem of matching alternating Hamilton
cycles in bipartite graphs. Given a bipartite graph $G$ with a
perfect matching $M$, if we orient the edges of $G$ towards the
same part, then contracting all edges in $M$, we get a digraph
$D$. An $M$-alternating Hamilton cycle of $G$ corresponds to a
directed Hamilton cycle of $D$, and vice versa. Hence, Theorem
\ref{theorem:Woodall} is equivalent to the following theorem.

\begin{theorem} \label{theorem:Las}
(Las Vergnas, 1972 \cite{Las1972}) Let $G = (B,W)$ be a balanced
bipartite graph of order $\nu$. If for any $b \in B$ and $w \in
W$, where $b$ and $w$ are nonadjacent, we have $d(w) + d(b) \geq
\nu/2 + 2$, then for every perfect matching $M$ of $G$, there is
an $M$-alternating Hamilton cycle.
\end{theorem}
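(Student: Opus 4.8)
The plan is to prove the theorem exactly via the equivalence with Woodall's Theorem \ref{theorem:Woodall} that is sketched just before the statement: I make the bipartite-to-digraph correspondence fully precise for the given matching, check that the hypotheses translate term-by-term, and then invoke Theorem \ref{theorem:Woodall}. Fix the perfect matching $M = \{b_1w_1, \dots, b_nw_n\}$ with $b_i \in B$, $w_i \in W$ and $n = \nu/2$. I form a simple loopless digraph $D$ on $\{v_1, \dots, v_n\}$, where $v_i$ represents the contracted edge $b_iw_i$, by orienting every non-matching edge from its $W$-endpoint to its $B$-endpoint and then contracting $M$; concretely, I put an arc $(v_i, v_j)$ exactly when $i \neq j$ and $w_ib_j \in E(G)$. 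Then $|D| = n = \nu/2$.

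First I would compute the degrees of $D$ in terms of those of $G$. The only edge at $w_i$ that does not become an out-arc of $v_i$ is the matching edge $w_ib_i$, so $d^+(v_i) = d(w_i) - 1$, and symmetrically $d^-(v_j) = d(b_j) - 1$. Next I would translate the hypothesis: an arc $(v_i, v_j)$ with $i \neq j$ is absent precisely when $w_i$ and $b_j$ are nonadjacent in $G$, and for such a pair the assumption $d(w_i) + d(b_j) \geq \nu/2 + 2$ rearranges to $(d(w_i) - 1) + (d(b_j) - 1) \geq \nu/2 = |D|$, that is, $d^+(v_i) + d^-(v_j) \geq |D|$. Hence $D$ satisfies exactly the hypothesis of Theorem \ref{theorem:Woodall}.

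The one point needing care is the diagonal pairs with $i = j$: these would correspond to loops, which $D$ does not contain, so Woodall's condition (stated only for distinct vertices) imposes nothing there. This is harmless, since $b_i$ and $w_i$ are joined by a matching edge and so are never a nonadjacent pair to which the hypothesis of Theorem \ref{theorem:Las} applies; the two degree conditions therefore correspond exactly on precisely the relevant pairs. Finally I would verify the cycle correspondence: a directed Hamilton cycle $v_{i_1}v_{i_2}\cdots v_{i_n}v_{i_1}$ of $D$ uses arcs $(v_{i_k}, v_{i_{k+1}})$, i.e. non-matching edges $w_{i_k}b_{i_{k+1}}$, which together with the matching edges $b_{i_k}w_{i_k}$ assemble into the cycle $b_{i_1}w_{i_1}b_{i_2}w_{i_2}\cdots b_{i_n}w_{i_n}b_{i_1}$ of $G$. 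This visits all $2n = \nu$ vertices and alternates between $M$-edges and non-$M$-edges, so it is an $M$-alternating Hamilton cycle, and the construction reverses verbatim. Applying Theorem \ref{theorem:Woodall} to $D$ then yields the desired cycle.

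I expect the main obstacle to be bookkeeping rather than mathematics: keeping the orientation convention consistent so that $M$-alternating cycles map bijectively to directed cycles, and confirming that the shift by $2$ in the bipartite bound accounts exactly for deleting the two matching half-edges at $w_i$ and $b_j$ when passing to $D$. Should a self-contained argument be preferred over invoking Theorem \ref{theorem:Woodall}, one could instead run a rotation-extension argument on a longest $M$-alternating cycle directly in $G$, but the reduction above is shorter and makes explicit the equivalence asserted before the statement.
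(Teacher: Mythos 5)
Your proposal is correct and takes essentially the same route as the paper: the paper justifies Theorem \ref{theorem:Las} exactly by the orient-and-contract correspondence with Theorem \ref{theorem:Woodall} sketched immediately before the statement, and your write-up simply makes that correspondence precise (the degree shift $d^+(v_i)=d(w_i)-1$, $d^-(v_j)=d(b_j)-1$ accounting for the $+2$ in the bound, and the bijection between directed Hamilton cycles of $D$ and $M$-alternating Hamilton cycles of $G$). Your handling of the diagonal pairs $i=j$ is also the right observation and introduces no gap.
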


Hence, we also determine the extremal graphs for the result of Las
Vergnas in this paper.

Theorem \ref{theorem:Las} is an instance of the problem of cycles
containing matchings, which studies the conditions that enforce
certain matchings to be contained in certain cycles. Some related
works can be found in \cite{AFG2009}, \cite{AFGW2007},
\cite{Berman1983}, \cite{Haggkvist1979}, \cite{JW1990},
\cite{Kaw2002}, \cite{Philpotts2008} and \cite{Wojda1983}. In
particular, Berman proved the following.

\begin{theorem} \label{theorem:Berman}
(Berman, 1983 \cite{Berman1983}) Let $G$ be a graph on $\nu \geq
3$ vertices. If for any pair of independent vertices $x$, $y \in V
(G)$, we have $d(x) + d(y) \geq \nu + 1$, then every matching lies
in a cycle.
\end{theorem}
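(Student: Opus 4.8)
The plan is to argue by contradiction with an extremal cycle. Suppose $M$ is a matching of $G$ that lies in no cycle, and among all cycles of $G$ choose $C$ containing the greatest number of edges of $M$, and, subject to that, of greatest length; write $s=|E(C)\cap M|$, so that $s<|M|$ and there is a matching edge $e=xy\in M\setminus E(C)$. The aim is to transform $C$ into a cycle that retains the $s$ matching edges already on $C$ and in addition carries $e$, contradicting the choice of $C$. Note first that, because $M$ is a matching, no edge of $C$ incident to a cycle-vertex of $\{x,y\}$ can itself lie in $M$, and likewise the two edges of $C$ flanking any matching edge are non-matching. Orienting $C=u_0u_1\cdots u_{m-1}u_0$ (subscripts mod $m$ as in the paper), one always has a non-matching edge of $C$ available to be broken near $x$ or $y$. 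Observe also that by Ore's theorem the hypothesis already forces $G$ to be Hamiltonian and $2$-connected, so $C$ may be taken long; this will be used to keep most vertices on the cycle.

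The rerouting is organised by how many of $x,y$ lie on $C$. If $x,y\notin V(C)$, I seek an index $i$ with $u_iu_{i+1}\notin M$, $u_i\in N(x)$ and $u_{i+1}\in N(y)$ (or the mirror pattern), and replace the edge $u_iu_{i+1}$ by the path $u_i\,x\,y\,u_{i+1}$; this gains $e$ while destroying only a non-matching edge, so the resulting cycle contains $s+1$ edges of $M$. If exactly one endpoint lies on $C$, I attach the other through a cycle-neighbour by the same device, and if both lie on $C$ then $e$ is a chord and I reroute $C$ through it by severing two non-matching edges. The existence of an admissible splice is to be forced by an Ore-type crossing count: setting $A=N_C(x)$ and $B^-=\{u_i: u_{i+1}\in N_C(y)\}$, a common element $u_i\in A\cap B^-$ with $u_iu_{i+1}\notin M$ yields the splice, and $|A|+|B^-|$ is bounded below in terms of $d(x)+d(y)$.

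The main obstacle is twofold, and it is precisely here that the surplus of $1$ over Ore's threshold must be spent. First, the hypothesis bounds the \emph{total} degrees $d(x)+d(y)$, whereas the count needs neighbours of $x$ and $y$ lying \emph{on} $C$; when $C$ is not spanning, the endpoints may waste edges on the vertices off $C$. I would control this exactly as in the longest-cycle method for Hamiltonicity, showing by a rotation argument that an off-cycle vertex has no two consecutive cycle-neighbours, hence that off-cycle neighbourhoods are small, which either forces enough of $N(x)\cup N(y)$ onto $C$ or directly produces a longer cycle. Second, and more delicate, the protected matching edges of $C$ forbid up to $s$ of the crossing positions, so in the tight extremal configuration the naive single-edge insertion falls short by exactly the matching contribution; overcoming this requires either replacing that insertion by a more global rerouting that swaps whole segments of $C$, or analysing the near-equality case directly to expose its rigid structure and rule it out. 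Managing these forbidden positions while simultaneously protecting the $s$ matching edges already secured is the crux of the argument; once it is handled, the case analysis above closes the contradiction. Finally I would dispose of the degenerate small cases separately, using $\nu\ge 3$ and the fact that the degree-sum bound already makes $G$ Hamiltonian.
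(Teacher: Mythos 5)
This statement is Berman's theorem, which the paper only \emph{cites} (reference \cite{Berman1983}) as background for its own results; the paper contains no proof of it, so your attempt can only be judged on its own terms. As a proof it has a genuine gap, and you have in fact flagged it yourself: the entire difficulty of the theorem is concentrated in the step you describe as ``the crux of the argument; once it is handled, the case analysis above closes the contradiction.'' Choosing $C$ with the maximum number of edges of $M$ and trying to splice $e=xy$ into a non-matching edge $u_iu_{i+1}$ of $C$ is the natural first attack, but with a hypothesis exceeding Ore's bound by only $1$, the crossing count (which needs $|N_C(x)|+|N_C(y)|\geq m+1$ to force a common position at all) leaves up to $s=|E(C)\cap M|$ forbidden positions unaccounted for, and you offer no mechanism --- neither the ``more global rerouting'' nor the ``near-equality analysis'' you allude to --- that actually recovers this deficit. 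Deferring it is not a local patch: this obstruction is exactly why the statement was a conjecture of H\"{a}ggkvist rather than a routine consequence of Ore-type counting, and Berman's published proof is organized around overcoming precisely it.

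There is also a more basic flaw in the counting you do set up: $x$ and $y$ are the two endpoints of the matching edge $e$, hence they are \emph{adjacent}, and the hypothesis --- which applies only to pairs of \emph{independent} vertices --- gives no lower bound whatsoever on $d(x)+d(y)$. So the quantity you propose to bound from below, $|A|+|B^-|$ ``in terms of $d(x)+d(y)$,'' has no bound available; a correct count must instead apply the degree condition to genuinely non-adjacent pairs, for example to $y$ and a vertex $u_{i+1}\notin N(y)$ with $u_i\in N_C(x)$, which you never do. Relatedly, your rotation claim that an off-cycle vertex can have no two consecutive neighbours on $C$ fails when the edge joining those two consecutive vertices lies in $M$ (inserting the vertex there would destroy a protected matching edge), so off-cycle neighbourhoods cannot be bounded the way you assert; the same interference between the matching and the standard longest-cycle machinery recurs throughout your sketch. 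In short, the skeleton is the natural one, but every step where the theorem's actual content lives is either missing or broken.
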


Similarly  to the above-mentioned works, Jackson and Wormald
determined all the extremal graphs of a generalized version of
Berman's result.
\begin{theorem}
(Jackson and Wormald, 1990 \cite{JW1990}) Let $G$ be a graph on
$\nu$ vertices and $M$ be a matching of $G$ such that (1) $d(x) +
d(y) \geq \nu$ for all pairs of independent vertices $x, y$ that
are incident with $M$. Then $M$ is contained in a cycle of $G$
unless equality holds in (1) and several exceptional cases happen.
\end{theorem}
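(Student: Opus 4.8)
The plan is to argue by contradiction, combining an extremal choice of cycle with a rerouting (insertion) lemma and a crossing‑type count, and to treat equality in (1) as the precise gateway to the exceptional graphs. First I would assume that $M$ is not contained in any cycle of $G$. Among all cycles of $G$ that contain at least one edge of $M$, I would select a cycle $C=u_0u_1\cdots u_{m-1}u_0$ that contains the maximum number $r$ of edges of $M$, and, subject to that, has maximum length. A short preliminary argument is needed that some edge of $M$ lies on a cycle at all: if strict inequality holds in (1), no edge of $M$ can be a bridge, since a bridge $ab\in M$ would force the endpoints' neighborhoods into disjoint sides and violate the degree sum. If $r=|M|$ we are done, so I fix an edge $e=ab\in M$ with $e\notin E(C)$, and as a first reduction I dispose of the cases where $a$ or $b$ lies on $C$ (a pendant or a chord situation, handled by the same rerouting below), so that I may assume $a,b\notin V(C)$.

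The key tool is a rerouting lemma. Writing $A=N(a)\cap V(C)$ and $B=N(b)\cap V(C)$, if there is an index $i$ with $u_i\in A$, $u_{i+1}\in B$ and $u_iu_{i+1}\notin M$, then replacing the edge $u_iu_{i+1}$ of $C$ by the path $u_i\,a\,b\,u_{i+1}$ produces a cycle $C'$ with $r+1$ edges of $M$ and greater length, contradicting the choice of $C$. A symmetric rerouting works through predecessors, and more generally through short $M$-alternating detours hanging at $a$ or $b$; using these I would show that any neighbor of $a$ or $b$ lying off $C$ can be absorbed into such a detour, so that $d(a)$ and $d(b)$ are, up to the single edge $ab$, realized on $C$ by $A$ and $B$.

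The heart of the matter is the count. If no admissible rerouting exists, then no non-matching edge $u_iu_{i+1}$ of $C$ can have simultaneously $u_i\in A$ and $u_{i+1}\in B$; comparing $A$ with the set of predecessors of $B$ then confines their overlap to the $r$ matching positions of $C$, which caps $|A|+|B|$ and hence $d(a)+d(b)$. Since $a$ and $b$ are \emph{adjacent}, (1) cannot be applied to them directly; instead I would locate on $C$ a vertex $z$ that is incident with $M$ and nonadjacent to $a$ (or to $b$), apply (1) to the independent $M$-covered pair $\{a,z\}$, and feed the resulting abundance of $z$'s neighbors on $C$ back into the configuration to manufacture the forbidden consecutive pattern $u_i\in A,\ u_{i+1}\in B$. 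This produces the rerouting after all — a contradiction — unless every inequality invoked along the way is met with equality.

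The main obstacle, and where the genuine work lies, is this equality analysis. When (1) holds with equality everywhere the count is forced, the bounds become tight: $A$, the successor set $A^{+}$, and $B$ tile $V(C)$ exactly, and the adjacencies among $a$, $b$, $z$ and the $M$-covered vertices of $C$ are pinned down. From this rigidity I would extract the exact adjacency pattern of $G$ and show that $C$, the edge $e$, and the forced non-edges assemble into one of a short list of extremal configurations — graphs built from complete and complete‑bipartite blocks glued along the edges of $M$, together with a few sporadic small companions — and I would check in turn that each such graph indeed fails to carry $M$ on a cycle while no other graph survives the equality constraints. It is this enumeration and verification of the extremal cases, rather than the counting itself, that I expect to be the hardest and most delicate part of the proof.
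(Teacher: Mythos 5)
Two things before the substance: the paper you were given never proves this statement at all --- it is Jackson and Wormald's theorem, quoted verbatim from \cite{JW1990} as background motivation for the authors' own Woodall-type result --- so there is no ``paper's proof'' to compare yours against, and your proposal has to stand on its own. Its skeleton (a cycle with the maximum number of $M$-edges, insertion of the path $u_i\,a\,b\,u_{i+1}$ across a non-matching edge, a predecessor/successor overlap count, then an equality analysis) is indeed the right family of techniques for results of this kind. But the step you yourself call the heart of the matter is missing, not merely compressed. Because $a$ and $b$ are adjacent, condition (1) never applies to them; your substitute --- pick one $M$-covered vertex $z$ on $C$ nonadjacent to $a$, apply (1) to $\{a,z\}$, and ``feed the abundance of $z$'s neighbors back into the configuration'' --- is not an argument: neighbors of a vertex $z$ sitting on $C$ furnish no cycle modification that preserves the $M$-edges of $C$ and inserts $ab$, and nothing you write converts them into the forbidden pattern $u_i\in A$, $u_{i+1}\in B$. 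The device that actually closes such counts is to apply the degree condition to \emph{two} crossed nonadjacent pairs and sum the two inequalities; this is exactly what the present paper does in proving its own Theorem \ref{MainTheorem}, where the condition is applied to the pairs $\{v_0,u_{2r-1}\}$ and $\{v_{2p_1-1},u_{2s}\}$ (off-cycle endpoint crossed with on-cycle vertex) to get a combined lower bound $\nu+2$ that is played against a combined structural upper bound. Without such a two-pair summation scheme (including verifying that the chosen on-cycle vertices are $M$-covered, so that (1) applies to them), your contradiction never materializes. The same slip appears in your preliminary step: you exclude a bridge $ab\in M$ by ``violating the degree sum'' at its endpoints, but that pair is adjacent, so (1) says nothing about it; the repair needs other $M$-covered vertices on both sides of the bridge, and when $M$ is a single edge there are none --- a path on three vertices satisfies (1) vacuously, yet its matching edge lies on no cycle, one of the degenerate situations the precise Jackson--Wormald statement must treat.

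Second, the equality analysis is not an appendix to this theorem; it \emph{is} the theorem. The statement has the form ``Hamilton-type conclusion unless equality holds and an exceptional configuration occurs,'' so the enumeration and verification of the exceptional graphs is the main body of any proof --- it is what occupies most of Jackson and Wormald's paper, just as the analogous enumeration ($\mathcal{G}_1$ through $G_4$, via the case analysis on $t_0,t_1,t_2$ and the central path $R$) occupies most of the present one. Your proposal only names the expected answer (``complete and complete-bipartite blocks glued along $M$, plus sporadic small companions'') without deriving any of it from the tight inequalities. As written, then, the proposal establishes neither half of the statement: the cycle-existence half fails at the adjacent-pair obstruction above, and the characterization half is a declaration of intent.
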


We will state our main results and their proofs in the following
sections.

\section{Main results}

Let $m,\ n\geq 1$ be integers. Let $\mathcal{D}_1$ be the set of
all digraphs obtained by identifying one vertex of
$\overleftrightarrow{K}_{n+1}$ with one vertex of
$\overleftrightarrow{K}_{m+1}$. Let $D_2$ be an arbitrary digraph
on $n$ vertices, and take a copy of $I_{n+1}$. Let $\mathcal{D}_2$
be the set of all digraphs obtained by adding arcs of two
directions between every vertex of $I_{n+1}$ and every vertex of
$D_2$. Let $D_3$ be as shown in Figure \ref{figure:exception3_D},
and take a copy of $\overleftrightarrow{K}_n$. Let $\mathcal{D}_3$
be the set of all graphs constructed by adding arcs of two
directions between $v_i$, $i=0,1$, and every vertex of
$\overleftrightarrow{K}_n$, and possibly, adding any of the arcs
$(v_0, v_1)$ and $(v_1,v_0)$, or both. Finally, let $D_4$ be the
digraph showed in Figure \ref{figure:exception4_D}. Our main
result is as below.

\begin{theorem} \label{MainTheoremDigraph}
Let $D$ be a digraph. For every vertex pair $u$ and $v$, where
there is no arc from $u$ to $v$, we have $d^+(u)+d^-(v)\geq
|D|-1$, then $D$ has a directed Hamilton cycle, unless $D\in
\mathcal{D}_1$, $\mathcal{D}_2$ or $\mathcal{D}_3$, or $D=D_4$.
\end{theorem}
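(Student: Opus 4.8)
The plan is to argue in characterization form: assume $D$ satisfies the degree condition $d^+(u)+d^-(v)\ge |D|-1$ for every pair with no arc from $u$ to $v$, and that $D$ has no directed Hamilton cycle, and then show that $D$ must belong to $\mathcal{D}_1$, $\mathcal{D}_2$, or $\mathcal{D}_3$, or equal $D_4$. A first reduction shows $D$ is strongly connected. Indeed, if not, order the strong components $S_1,\dots,S_k$ (with $k\ge 2$) so that all arcs between components run from lower to higher index, and pick $v\in S_1$ and $u\in S_k$. There is no arc from $u$ to $v$, while $d^+(u)\le |S_k|-1$ and $d^-(v)\le |S_1|-1$; the condition then forces $|S_1|+|S_k|\ge |D|+1$, contradicting $|S_1|+|S_k|\le |D|$. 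Note also that if \emph{every} relevant pair satisfied the strict Woodall bound $d^+(u)+d^-(v)\ge |D|$, then Theorem \ref{theorem:Woodall} would already supply a directed Hamilton cycle; so all of the action takes place around the pairs for which equality $d^+(u)+d^-(v)=|D|-1$ holds, and the extremal families should be read off from these tight pairs.

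Next I would set up a longest-cycle analysis. Let $C=u_0u_1\cdots u_{c-1}u_0$ be a longest directed cycle; strong connectivity gives $c\ge 2$, and non-Hamiltonicity gives $R:=D-C\ne\emptyset$. The workhorse is an insertion lemma: for $x\in R$ there is no index $i$ with both $(u_i,x)$ and $(x,u_{i+1})$ in $A(D)$, since otherwise $x$ could be spliced into $C$ between $u_i$ and $u_{i+1}$ to produce a longer cycle. Writing the out- and in-neighbourhoods of $x$ on $C$ as index sets, this disjointness yields $d_C^+(x)+d_C^-(x)\le c$ for every $x\in R$. The same idea applied to a directed path $x_1x_2\cdots x_t$ inside $R$ shows that an in-arc $(u_i,x_1)$ and an out-arc $(x_t,u_{i+1})$ cannot occur at the same gap of $C$, which is what lets several vertices of $R$ be handled simultaneously.

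The heart of the argument is to combine these insertion constraints with the degree condition to force rigidity. The plan is to show first that $R$ induces a complete digraph: a non-adjacent pair $x,y\in R$ has $d^+(x)+d^-(y)\ge |D|-1$, and feeding this large degree sum into the path-insertion constraints would force an in-arc and an out-arc at a common gap of $C$, extending it. With $R$ complete I would then control the arcs between $R$ and $C$. Each tight degree sum converts an inequality $d_C^+(x)+d_C^-(x)\le c$ into an equality, which pins down exactly where the neighbours of $x$ sit on $C$ and partitions $C$ into blocks on which the adjacency pattern is forced. Tracking these blocks, together with the sizes of $R$ and $C$ coming from the equalities $d^+(x)+d^-(y)=|D|-1$, should reveal the global structure: a cut vertex separating two complete digraphs (the family $\mathcal{D}_1$), or an independent set of size one larger than the rest completely joined to it (the family $\mathcal{D}_2$).

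The main obstacle, as usual for exact extremal characterizations, is the equality analysis needed to isolate \emph{precisely} the four families and to rule out every other tight configuration. The generic block argument should produce $\mathcal{D}_1$ and $\mathcal{D}_2$, but the sporadic family $\mathcal{D}_3$ and the single digraph $D_4$ (Figures \ref{figure:exception3_D} and \ref{figure:exception4_D}) must arise from small-order boundary cases, where $c$ or $|R|$ is so small that the block decomposition degenerates and the neighbourhoods can overlap in extra ways; these will require separate, hands-on verification. I would organize this as a careful case split on $|R|$ (and, for $\mathcal{D}_3$, on whether the arcs $(v_0,v_1)$ and $(v_1,v_0)$ are present), checking in each case that equality throughout forces exactly the listed graph. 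Finally, by the bipartite--digraph correspondence (orient all edges toward one part and contract the matching edges), each digraph exception translates into the corresponding extremal bipartite graph, yielding the analogous characterization of the extremal cases for the Las Vergnas bound of Theorem \ref{theorem:Las}.
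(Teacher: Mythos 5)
Your setup is sound, and it is essentially the paper's own machinery translated into digraph language: the paper proves the equivalent bipartite statement (Theorem \ref{MainTheorem}) via a longest $M$-alternating cycle, and your insertion lemma is exactly Lemmas \ref{lemma:CycleToPath} and \ref{lemma:CycleToCycle} read through the matching--digraph correspondence (your strong-connectivity reduction is correct but not needed on that side). The first genuine gap is quantitative. The only numerical consequence you extract from insertion is gap-disjointness, i.e. $d_C^-(y)+d_C^+(x)\le c$ whenever $D-C$ contains a directed path from $y$ to $x$. That bound cannot produce the contradiction you claim when arguing that $D-C$ induces a complete digraph: for a non-adjacent pair it only gives $d^+(x)+d^-(y)\le c+2(|D-C|-1)=|D|+|D-C|-2$, which is compatible with the hypothesis $d^+(x)+d^-(y)\ge |D|-1$ for every $|D-C|\ge 1$. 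What is actually needed is the refined count in Lemma \ref{lemma:CycleToPath}: because $C$ is longest, the attachment points of the two ends of a path $P$ off $C$ must be at least $|P|$ apart along $C$, giving (in the paper's bipartite notation) $|N_C(b)|+|N_C(w)|\le m-|P|/2+1$. Even with this refinement the degree sum comes out to exactly the threshold, so one lands in equality analysis rather than outright contradiction; completeness of the off-cycle part is then a consequence of that equality analysis (Claim \ref{claim:G_1 complete}), not of a direct splicing argument.

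The second, and larger, gap is that the characterization itself is deferred rather than proved: everything after ``should reveal the global structure'' is precisely the theorem. In the paper this is the bulk of Section 4: forcing equality in the chain (\ref{eqn:degreesum})--(\ref{eqn:sumof4geq}); introducing critical graphs, opposite paths and central paths; proving Claims \ref{claim:G_1 G_2 disjoint}--\ref{claim:t0=0_or t1=t2=0}; classifying the central-path edges into types with counts $t_0,t_1,t_2$ (including the symmetry $t_{12}=t_{21}$); and a long case analysis (Cases 1, 2.1, 2.2.1, 2.2.2, with the ``change couple'' arguments) showing that nothing outside the four families survives. Your guiding heuristic that $\mathcal{D}_3$ and $D_4$ arise as ``small-order boundary cases'' is also off: $\mathcal{D}_3$ has unbounded order (it contains an arbitrary $\overleftrightarrow{K}_n$) and arises in the paper from a structured configuration --- a critical edge whose central path has length six --- not from degeneracy of a short longest cycle, although the off-cycle part there is indeed a single contracted pair. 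So the proposal is a reasonable opening that matches the paper's strategy in outline, but it does not contain a proof of the statement.
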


Let $\mathcal{G}_1$ be the class of graphs $G$ constructed by
identifying an edge of one $K_{m+1,m+1}$ and one $K_{n+1,n+1}$,
and $\mathcal{M}_1$ be the set of all perfect matchings of $G$
containing the identified edge. Let $\mathcal{G}_2$ be the class
of graphs $G$, constructed by taking a copy of $(n+1)K_2$ with
bipartition $(B,\ W)$, and an arbitrary bipartite graph $G_2$ with
bipartition $(B_1, \ W_1)$, where $|B_1|=|W_1|=n$, which has at
least one perfect matching, then connecting every vertex in $B$ to
every vertex in $W_1$, and every vertex in $W$ to every vertex in
$B_1$. Furthermore, let $\mathcal{M}_2$ be the set of all perfect
matchings of $G$, containing all the edges in $(n+1)K_2$ (shown
thick in Figure \ref{exception2}). Let $G_3$ be as shown in Figure
\ref{exception3}, and $\mathcal{G}_3$ the set of the graphs $G$
constructed by taking one copy of $K_{n,n}$ with bipartition $(B,
W)$, and connecting every vertex in $B$ to $w_0$ and $w_1$, every
vertex in $W$ to $b_0$ and $b_1$, and possibly, adding any of the
edges $w_0b_1$, $w_1b_0$, or both. Let $\mathcal{M}_3$ be the set
of perfect matchings of $G$, containing the thick edges in $G_3$.
Finally, we let graph $G_4$ be the graph in Figure
\ref{exception4}, and $M_4$ the perfect matching of it, consisting
of the thick edges. We have the following version of our main
theorem.

\begin{theorem} \label{MainTheorem}
Let $G=(W,B)$ be a bipartite graph with a perfect matching $M$,
for every vertex pair $w\in W$ and $b\in B$, where $wb-$, we have
$d(w)+d(b)\geq \nu/2+1$. Then $G$ has an $M$-alternating Hamilton
cycle, unless one of the following holds.

(1) $G \in \mathcal{G}_1$, and $M \in \mathcal{M}_1$.

(2) $G \in \mathcal{G}_2$, and $M \in \mathcal{M}_2$.

(3) $G\in \mathcal{G}_3$, and $M \in \mathcal{M}_3$.

(4) $G=G_4$ and $M=M_4$.
\end{theorem}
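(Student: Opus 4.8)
Write $n=\nu/2=|W|=|B|$, so the hypothesis reads $d(w)+d(b)\ge n+1$ for every nonadjacent pair $w\in W$, $b\in B$. By the bipartite--digraph correspondence recalled before Theorem~\ref{theorem:Las}, this statement is equivalent to Theorem~\ref{MainTheoremDigraph}, with the four exceptional families corresponding to $\mathcal D_1,\mathcal D_2,\mathcal D_3,D_4$; I will nevertheless argue directly in the bipartite setting, where the exceptions are displayed. The first observation is that an $M$-alternating Hamilton cycle is exactly an $M$-alternating cycle meeting all $\nu$ vertices, equivalently one that contains all $n$ edges of $M$; and that for \emph{any} $M$-alternating cycle $C$ the off-cycle set $R:=G-V(C)$ is spanned by edges of $M$ (each vertex of $C$ uses its $M$-edge on $C$, so $M$-partners of off-cycle vertices are again off $C$). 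Assume for contradiction that $G$ has no $M$-alternating Hamilton cycle. If $G$ has no $M$-alternating cycle at all, I would first produce one from a longest closed $M$-alternating path, closing it up with the degree condition; in any case choose a \emph{longest} $M$-alternating cycle $C$, so that $R\neq\varnothing$ and $R$ contains at least one edge $wb\in M$ with $w\in W$, $b\in B$.

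The technical heart is an \emph{insertion lemma}. Orient $C=x_0x_1\cdots x_{2k-1}x_0$ with $x_{2i}\in W$, $x_{2i+1}\in B$, its $M$-edges being $x_{2i}x_{2i+1}$ and its link (non-matching) edges $x_{2i+1}x_{2i+2}$. For a matching edge $wb$ with $w,b\in R$, maximality of $C$ forbids replacing any link $x_{2i+1}x_{2i+2}$ by the path $x_{2i+1}\,w\,b\,x_{2i+2}$; hence there is no index $i$ with $w\sim x_{2i+1}$ \emph{and} $b\sim x_{2i+2}$. Writing $P(w)=\{i:w\sim x_{2i+1}\}$ and $S(b)=\{i:b\sim x_{2i+2}\}$, which enumerate all $C$-neighbours of $w$ and of $b$, this says $P(w)\cap S(b)=\varnothing$, so $d_C(w)+d_C(b)\le k$. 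I would also record the chained version: if several $M$-edges of $R$ could be threaded consecutively into one link, $C$ would extend, which controls both $e(R,V(C))$ and the internal adjacencies of $R$.

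The degree hypothesis is then fed in through the \emph{forced} nonadjacencies coming from $P(w)\cap S(b)=\varnothing$. Indeed, for every $i\in S(b)$ we have $i\notin P(w)$, i.e.\ $w\not\sim x_{2i+1}$, so $d(w)+d(x_{2i+1})\ge n+1$; symmetrically $b\not\sim x_{2i+2}$ for $i\in P(w)$ gives $d(b)+d(x_{2i+2})\ge n+1$. Combining these lower bounds with $d_C(w)+d_C(b)\le k$, with $|R|\ge 2$, and with the chained insertion constraints, I would push every inequality to near-equality. The outcome is rigid: the off-cycle pairs must be \emph{completely} joined to the relevant sides of $C$ (and of $R$), and there must be exactly one matched pair more than the available ``insertion slots'' can absorb. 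This ``$n+1$ competitors for $n$ slots'' pigeonhole is precisely the obstruction realised by each listed family---e.g.\ in $\mathcal G_2$ the $n+1$ edges of $(n+1)K_2$ each need a private neighbour in the $n$-vertex side of $G_2$.

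The main obstacle is the final equality analysis: when all the above inequalities are tight the extension arguments fail only in very specific configurations, and one must enumerate them exhaustively and show they coincide \emph{exactly} with the four families. Concretely I would (i) verify that each of $(\mathcal G_1,\mathcal M_1)$, $(\mathcal G_2,\mathcal M_2)$, $(\mathcal G_3,\mathcal M_3)$ and $(G_4,M_4)$ satisfies the hypothesis yet admits no $M$-alternating Hamilton cycle, so that they are genuine exceptions; and (ii) show that the rigidity forced above leaves no other possibility, i.e.\ the tight cases collapse onto these families and nothing else. Step (ii)---ruling out every near-extremal configuration except the four prescribed ones, including the sporadic small graph $G_4$---is the most delicate and computational part of the argument, and is where the bulk of the work lies.
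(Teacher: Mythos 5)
Your outline follows the same general strategy as the paper: take a longest $M$-alternating cycle $C$, prove a non-insertion lemma for $M$-edges (and, more generally, closed $M$-alternating paths) lying off $C$, feed the degree hypothesis into the nonadjacencies that the lemma forces, and then analyze the resulting tight inequalities. Up to that point your sketch is sound. But the proposal has a genuine gap, in two respects. First, your quantitative form of the insertion lemma is too weak: from $P(w)\cap S(b)=\varnothing$ you only get $d_C(w)+d_C(b)\le k$, whereas the paper's Lemma~\ref{lemma:CycleToPath} gives $|N_C(w)|+|N_C(b)|\le k-|P|/2+1$ for the endpoints of a closed $M$-alternating path $P$ in $G-C$ (the neighbours must also avoid an arc of $C$ at least as long as $P$, since $C$ is longest). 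This refinement is not cosmetic: it is exactly what the paper uses to prove that $G-C$ itself carries an $M$-alternating Hamilton cycle (the ``critical graph'' step), to derive the chain of equalities (\ref{eqn:u2s+u2r-1})--(\ref{eqn:sumof4geq}), and ultimately to show that the critical graph and the opposite graph are complete bipartite with no edges between them. With only the bound $\le k$, the inequalities do not close up to $\nu+2$ and the rigidity you invoke never materializes.

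Second, and more importantly, the part you defer as ``the most delicate and computational part'' is not a finishing touch --- it \emph{is} the theorem. The paper's proof of the extremal characterization occupies Claims~\ref{claim:G_1 complete}--\ref{claim:t0=0_or t1=t2=0}, Observations~\ref{observation:E2E1}--\ref{observation:E1E2}, and the full case analysis on the central path $R$: classifying each non-matching edge of $R$ into the four types $E_1\cap E_1^\prime$, $E_1\cap E_2^\prime$, $E_2\cap E_1^\prime$, $E_2\cap E_2^\prime$, proving $t_{12}=t_{21}$ by a degree count, ruling out ``change couples'' via explicit Hamilton-cycle constructions, and showing in Cases 1, 2.1, 2.2.1, 2.2.2 that only the configurations $\mathcal G_1,\mathcal G_2,\mathcal G_3,G_4$ survive. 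Your pigeonhole heuristic (``$n+1$ competitors for $n$ slots'') explains why the members of $\mathcal G_2$ fail to be Hamiltonian --- that is the easy direction, the verification that the listed families are exceptions --- but it gives no mechanism for proving the converse, namely that every non-Hamiltonian graph satisfying the hypothesis lies in one of the four families. As written, the proposal establishes the setup shared with the paper and then asserts, rather than proves, the classification; the statement remains unproved.
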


Since the two results are equivalent, we only prove Theorem
\ref{MainTheorem} in the next section. Before that, let's say a
few words on the non-existence of $M$-alternating Hamilton cycles
in the four exceptional cases. In Case (1), an $M$-alternating
cycle of $G$ must contain the identified edge, whose endvertices
form a vertex cut of $G$, so $G$ does not have an $M$-alternating
Hamilton cycle. In Case (2), if there is an $M$-alternating
Hamilton cycle $C$ of $G$, then the edges on $C$ that belong to
$M$ must be in $(n+1)K_2$ and $G_2$ alternately, but there is one
more such edge in $(n+1)K_2$, a contradiction. In Case (3), we can
not have an $M$-alternating Hamilton cycle containing both $e_0$
and $e_1$. Finally in Case (4), the non-existence of any
$M$-alternating Hamilton cycle can be verified directly.

\begin{figure}
\centering
\includegraphics[width=0.5\linewidth]{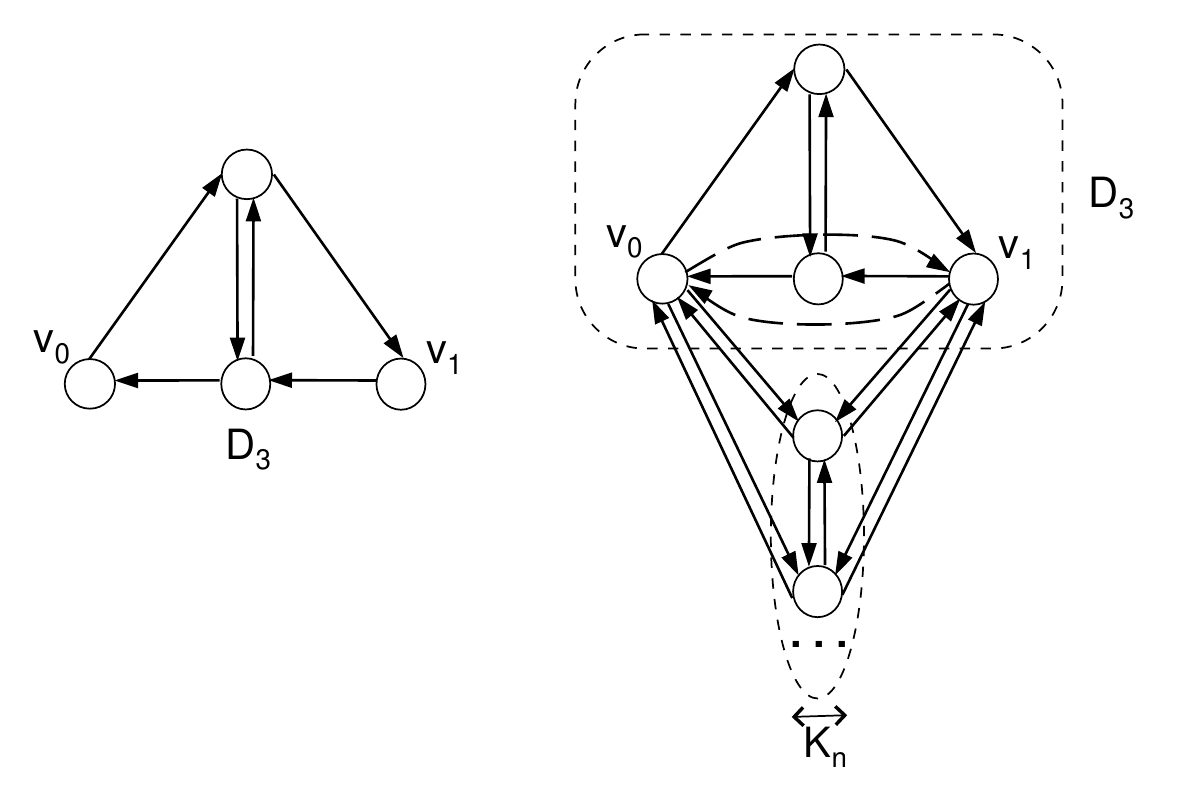}
\caption{Exceptional graph family: $\mathcal{D}_3$}
\label{figure:exception3_D}
\end{figure}

\begin{figure}
\centering
\includegraphics[width=0.33\linewidth]{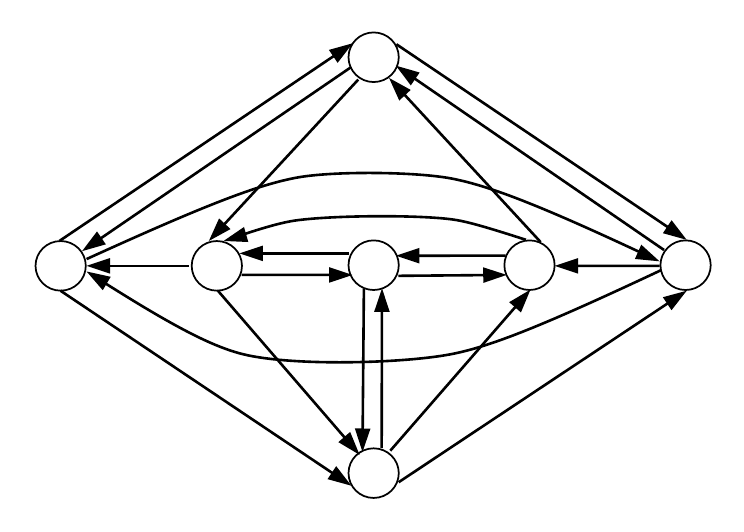}
\caption{Exceptional graph $D_4$} \label{figure:exception4_D}
\end{figure}

\begin{figure}
\centering
\includegraphics[width=0.28\linewidth]{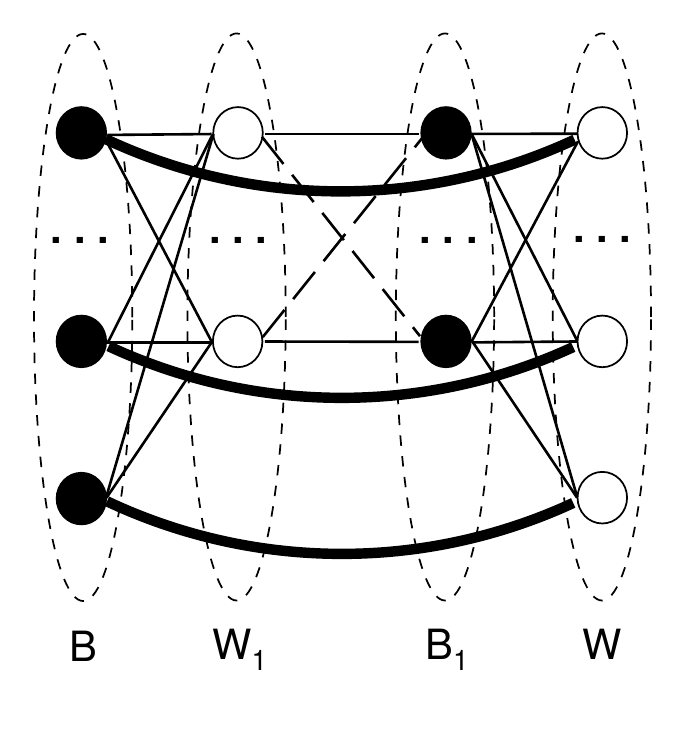}
\caption{Exceptional graph family: $\mathcal{G}_2$}
\label{exception2}
\end{figure}

\begin{figure}
\centering
\includegraphics[width=0.63\linewidth]{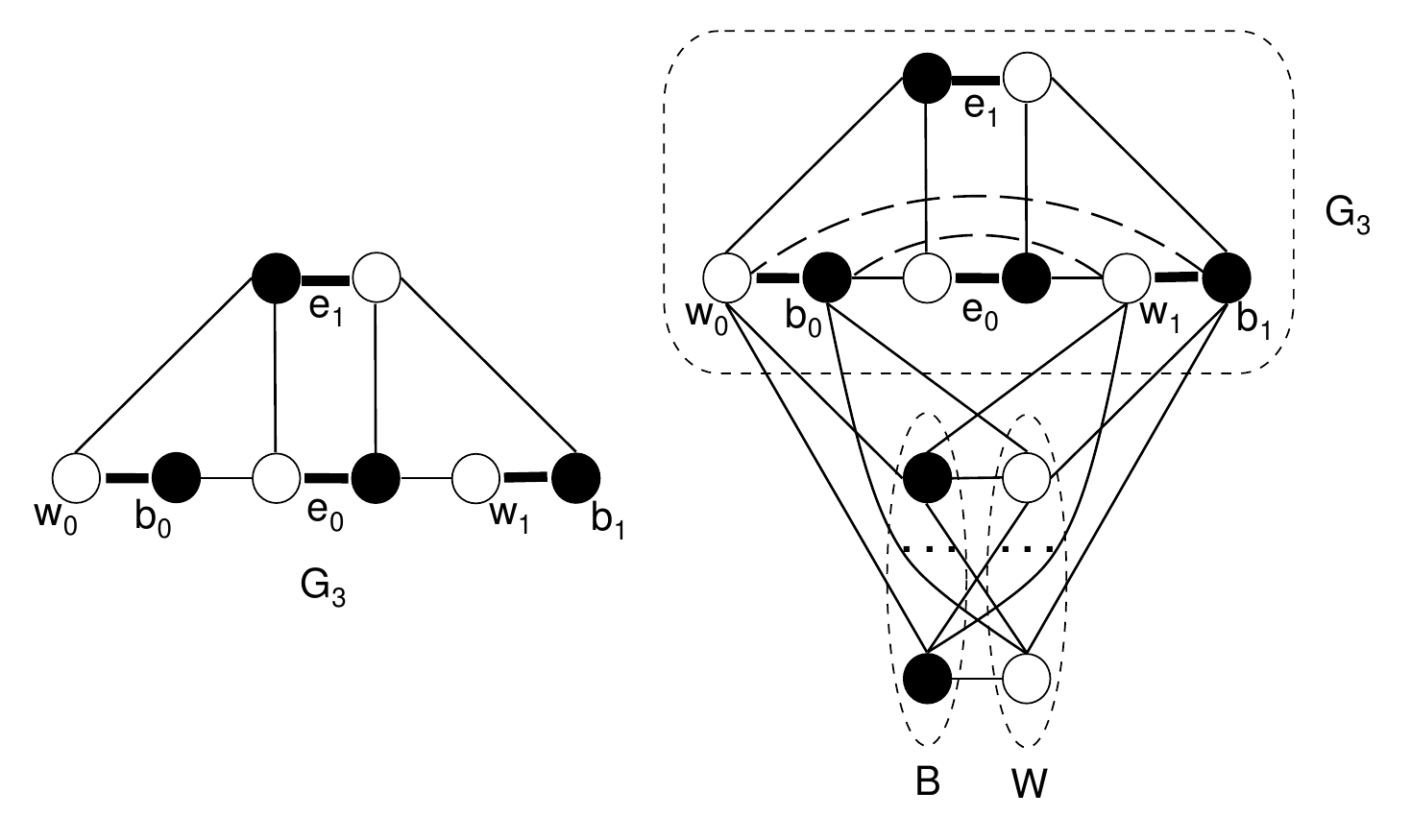}
\caption{Exceptional graph family: $\mathcal{G}_3$}
\label{exception3}
\end{figure}

\begin{figure}
\centering
\includegraphics[width=0.48\linewidth]{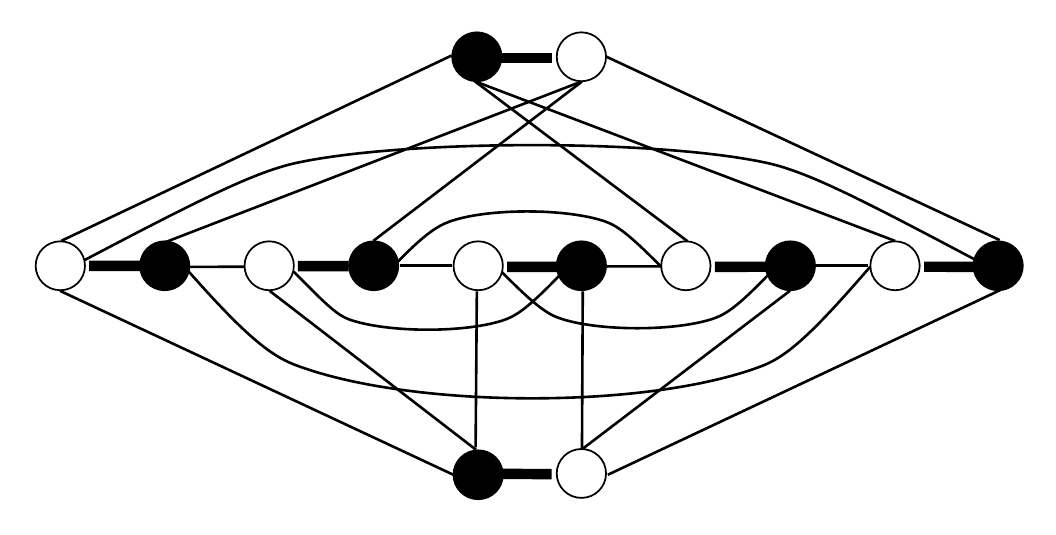}
\caption{Exceptional graph $G_4$} \label{exception4}
\end{figure}

\section{Proof of Theorem \ref{MainTheorem}}
Let $G=(W,B)$ be a bipartite graph satisfying the condition of the
theorem, $M$ a perfect matching of $G$. Suppose that $G$ does not
have an $M$-alternating Hamilton cycle. We prove the theorem by
characterizing $G$. $\newline$

The following two lemmas will be used in our proof.

\begin{lemma}\label{lemma:CycleToPath} Let $G=(W,B)$ be a bipartite graph with a
perfect matching $M$. Let $C=u_{0}u_{1}\ldots u_{2m-1}u_{0}$ be a
longest $M$-alternating cycle in $G$, where $u_{2i}\in W$,
$u_{2i+1}\in B$, and $u_{2i}u_{2i+1}\in M$, $0\leq i\leq m-1$. Let
$b\in B$, $w\in W$ be the ending vertices of a closed $M$-alternating
path $P$ in $G-C$. Then, for every $0\leq i\leq m-1$, either
$u_{2i}b-$ or $u_{2i-1}w-$. Furthermore, if $b\rightarrow C$ and
$w\rightarrow C$, then $|N_C(b)|+|N_C(w)|\leq m-|P|/2+1.$
\end{lemma}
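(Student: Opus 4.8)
The plan is to prove both conclusions from a single splicing construction: whenever $b$ and $w$ each reach $C$, I can cut $C$ open along one of its arcs and re-close it through $P$, and the maximality of $C$ then constrains how the neighbours of $b$ and $w$ are distributed around $C$. Write $|P|=2k$; it is even because $P$ is a closed $M$-alternating path running between $B$ and $W$, so it has exactly $k$ matching edges and $|P|/2=k$. Label the matching edges of $C$ by $e_i=u_{2i}u_{2i+1}$, $i\in\mathbb{Z}_m$. Note first that every edge from $b$ or $w$ to $C$ is a non-matching edge, since the $M$-partners of $b$ and $w$ lie on $P\subseteq G-C$. Now suppose $u_{2i}\in N_C(b)$ and $u_{2j-1}\in N_C(w)$. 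I would traverse the forward arc $u_{2i}C^{+}u_{2j-1}$ (which begins with $e_i$ and ends with $e_{j-1}$), jump to $w$ via the non-matching edge $wu_{2j-1}$, traverse $P$ from $w$ to $b$ (a closed $M$-alternating subpath, hence beginning and ending with matching edges), and jump back to $u_{2i}$ via the non-matching edge $bu_{2i}$. Checking the edge types at the four junctions $b,u_{2i},u_{2j-1},w$ shows this is an $M$-alternating cycle whose matching edges are exactly those on the arc together with the $k$ matching edges of $P$; if the arc contains $q$ matching edges of $C$, the spliced cycle has $q+k$ matching edges and so $2(q+k)$ vertices.

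For the first conclusion, take $j=i$, i.e. pair $u_{2i}$ with $u_{2i-1}$. Then the forward arc is all of $C$ except the single edge $u_{2i-1}u_{2i}$, so $q=m$ and the spliced cycle has $2(m+k)>2m$ vertices, contradicting the maximality of $C$. Hence we cannot have both $u_{2i}b+$ and $u_{2i-1}w+$; that is, $u_{2i}b-$ or $u_{2i-1}w-$ for every $i$.

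For the second conclusion, assume $b\to C$ and $w\to C$. For every $u_{2i}\in N_C(b)$ and $u_{2j-1}\in N_C(w)$ the spliced cycle must not beat $C$, so $q+k\le m$; equivalently the forward arc from $u_{2i}$ to $u_{2j-1}$ carries at most $m-k$ matching edges. Setting $I=\{i:u_{2i}\in N_C(b)\}$ and $J=\{j:u_{2j-1}\in N_C(w)\}$ in $\mathbb{Z}_m$, the arc count equals $(j-i)\bmod m$, so this reads $(j-i)\bmod m\in\{1,\dots,m-k\}$ for all $i\in I$, $j\in J$ (the excluded residue $0$ being exactly the $j=i$ case of the first part). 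After relabelling so that some $i_0=0$ lies in $I$, I get $J\subseteq\{1,\dots,m-k\}$, and for each fixed $j\in J$ the condition forces $I$ to avoid the length-$k$ block $\{j,j+1,\dots,j+k-1\}$. Thus $I$ avoids $\bigcup_{j\in J}\{j,\dots,j+k-1\}$; since the $|J|$ distinct starting points together with the $k-1$ elements just past the largest start are all distinct, this union has at least $|J|+k-1$ elements. Therefore $|I|\le m-(|J|+k-1)$, i.e. $|N_C(b)|+|N_C(w)|=|I|+|J|\le m-k+1=m-|P|/2+1$, as claimed.

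The conceptual heart, and the step most in need of care, is the splicing construction together with the bookkeeping that it is genuinely $M$-alternating and uses precisely $q+k$ matching edges; once that is in hand, the first conclusion is the extreme case $q=m$ and the second is a short interval-covering count. I would verify the alternation at each of the four junctions explicitly, and check the degenerate cases: $k=1$, where $P$ is a single matching edge and the bound collapses to $|I|+|J|\le m$ (consistent with the first part), and the situation where $N_C(b)$ or $N_C(w)$ is empty, which is excluded precisely by the hypothesis $b\to C$, $w\to C$ and without which the final inequality would fail.
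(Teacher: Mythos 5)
Your proof is correct. The first half coincides with the paper's: both splice $P$ into $C$ through a pair $u_{2i}b$, $u_{2i-1}w$ and contradict the maximality of $C$. For the second half the routes genuinely diverge. The paper makes an extremal choice: it picks $u_{2r}\in N_C(b)$ and $u_{2s-1}\in N_C(w)$ so that the gap $P^\prime=u_{2s}C^+u_{2r-1}$ is shortest, argues that no neighbour of $b$ or $w$ lies on $P^\prime$ (minimality handles the interior of $P^\prime$, while the two end vertices of $P^\prime$ are silently excluded by the first part), obtains $|P^\prime|\geq|P|$ from a single splice, and then counts on the complementary arc $R$ by pairing consecutive vertices $\{u_{2i-1},u_{2i}\}$ and charging at most one neighbour to each pair, plus the two endpoints of $R$; this gives $|N_C(b)|+|N_C(w)|\leq 2+(|C|-|P^\prime|-2)/2$. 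You instead apply the splice to \emph{every} pair $u_{2i}\in N_C(b)$, $u_{2j-1}\in N_C(w)$, converting maximality of $C$ into the uniform residue constraint $(j-i)\bmod m\in\{1,\dots,m-|P|/2\}$, and finish with an interval-avoidance count in $\mathbb{Z}_m$: the set $I$ must miss a block of length $|P|/2$ starting at each element of $J$, and the union of these blocks has at least $|J|+|P|/2-1$ elements. Your version costs more notation, but it is arguably more robust: it needs neither the extremal choice nor the slightly delicate ``no neighbour on $P^\prime$'' claim, since every step follows from the one splicing inequality, and it makes the role of the first part transparent (it is exactly the excluded residue $0$). The paper's version is more economical and reuses the first part directly as the counting device.
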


\begin{proof}
If there exists $0\leq k \leq m-1$, such that $u_{2k}b+$ and
$u_{2k-1}w+$, then $u_{2k}C^+u_{2k-1}wPbu_{2k}$ is an
$M$-alternating cycle longer than $C$, a contradiction. Thus, for
$0\leq i\leq m-1$, either $u_{2i}b-$ or $u_{2i-1}w-$.

If $b\rightarrow C$ and $w\rightarrow C$, let $u_{2r}\in N_C(b)$
and $u_{2s-1}\in N_C(w)$ be such that $P^\prime=u_{2s}C^+u_{2r-1}$
is the shortest. Then, there is no neighbor of $w$ and $b$ on
$P^\prime$. Since $C$ is the longest, we have $|P^\prime| \geq
|P|$. So $|N_C(w)|+|N_C(b)|\leq
2+(|C|-|P^\prime|-2)/2=m-|P^\prime|/2+1\leq m-|P|/2+1.$
\end{proof}

\begin{lemma}\label{lemma:CycleToCycle} Let $G$ be a bipartite graph with a
perfect matching $M$. Let $C=u_{0}u_{1}\ldots u_{2m-1}u_{0}$ be a
longest $M$-alternating cycle in $G$, where $u_{2i}u_{2i+1}\in M$,
$0\leq i\leq m-1$. Let $C_1$ be an $M$-alternating cycle in $G-C$.
For any vertex set $\{u_{2i-1},u_{2i}\}$, $0\leq i\leq m-1$,
either $u_{2i-1}\nrightarrow C_1$ or $u_{2i}\nrightarrow C_1$.
\end{lemma}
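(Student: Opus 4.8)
The plan is to argue by contradiction against the maximality of $C$. Suppose some pair $\{u_{2i-1},u_{2i}\}$ violates the conclusion, so that both $u_{2i-1}\rightarrow C_1$ and $u_{2i}\rightarrow C_1$. Adopting the convention of Lemma \ref{lemma:CycleToPath} that $u_{2i}\in W$ and $u_{2i+1}\in B$, we have $u_{2i-1}\in B$, $u_{2i}\in W$, and the edge $u_{2i-1}u_{2i}$ is precisely the non-matching edge of $C$ joining them. The idea is to delete this edge, opening $C$ into the closed $M$-alternating path $u_{2i}C^{+}u_{2i-1}$ (which begins with the matching edge $u_{2i}u_{2i+1}$ and ends with the matching edge $u_{2i-2}u_{2i-1}$), and then to reconnect its two ends through $C_1$ by a detour bounded by non-matching edges. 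Because the matching partners of $u_{2i-1}$ and $u_{2i}$ already lie on $C$, every edge from these two vertices into $C_1$ is automatically non-matching, which is exactly what the splice demands.

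Next I would pick the attachment points on $C_1$. Write $C_1=x_0x_1\cdots x_{2t-1}x_0$, indexed and oriented so that $x_{2j}x_{2j+1}\in M$ with $x_{2j}\in W$ and $x_{2j+1}\in B$; such an indexing exists because $C_1$ is an $M$-alternating cycle in a bipartite graph. Since $u_{2i-1}\in B$ has a neighbour on $C_1$, that neighbour lies in $W$, say $x_a$ with $a$ even; since $u_{2i}\in W$ has a neighbour on $C_1$, it lies in $B$, say $x_b$ with $b$ odd. I would then form
\[
u_{2i}\,C^{+}u_{2i-1}\,x_a x_{a+1}\cdots x_b\,u_{2i},
\]
where $x_a x_{a+1}\cdots x_b$ is the segment of $C_1$ traversed in increasing index order. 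The decisive point is the parity bookkeeping: since $a$ is even the first edge $x_a x_{a+1}$ of this segment lies in $M$, since $b$ is odd the last edge $x_{b-1}x_b$ lies in $M$, and the segment has an odd number $(b-a)\bmod 2t$ of edges, so it is a closed $M$-alternating path no matter which neighbours $x_a,x_b$ were selected. Splicing it between the non-matching edges $u_{2i-1}x_a$ and $x_b u_{2i}$ therefore yields an $M$-alternating cycle, and this cycle contains all $2m$ vertices of $C$ together with at least the two vertices $x_a,x_b$, so it is strictly longer than $C$, contradicting the choice of $C$.

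The step I expect to carry the real content is this parity observation. A priori one worries that $u_{2i-1}$ and $u_{2i}$ might attach only to badly placed vertices of $C_1$ (antipodal, or joined only across the "wrong" type of edge), so that no clean detour exists; and indeed a naive attempt to reuse a single prescribed matching or non-matching edge of $C_1$ can fail. The resolution is that the bipartition forces the $u_{2i-1}$-neighbour to have even index and the $u_{2i}$-neighbour to have odd index, and the contiguous arc of $C_1$ between an even-indexed vertex and an odd-indexed vertex \emph{always} begins and ends with matching edges; hence a valid detour exists for every admissible choice, in particular for any single witnessing neighbour of each vertex. Once this is established the contradiction is immediate, and since the roles of $W$ and $B$ are symmetric the same argument covers the opposite labelling, completing the proof.
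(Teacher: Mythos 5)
Your proof is correct and follows essentially the same route as the paper: assume both $u_{2i-1}$ and $u_{2i}$ send edges to $C_1$, pick one neighbour of each on $C_1$, and splice the closed $M$-alternating path $u_{2i}C^{+}u_{2i-1}$ with a closed $M$-alternating segment of $C_1$ to obtain a longer $M$-alternating cycle, contradicting the maximality of $C$. The only difference is that you spell out, via the parity of the indices on $C_1$, why a segment of $C_1$ starting and ending with matching edges always exists between the two chosen neighbours — a fact the paper simply asserts ("we can always find a closed $M$-alternating path $P$ as a segment of $C_1$").
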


\begin{proof}
Suppose there exists $0\leq k\leq m-1$ such that
$u_{2k-1}\rightarrow C_1$ and $u_{2k}\rightarrow C_1$. Let $b\in
N_{C_1}(u_{2k})$ and $w\in N_{C_1}(u_{2k-1})$. We can always find
a closed $M$-alternating path, $P$, as a segment of $C_1$,
connecting $b$ and $w$. Then $u_{2k}C^+u_{2k-1}wPbu_{2k}$ is an
$M$-alternating cycle longer than $C$, contradicting our
condition.
\end{proof}

In our proof, some important intermediate results are shown as
claims.

\begin{claim} \label{|G|/2+1}
There is an $M$-alternating cycle in $G$ whose length is at least
$\nu /2+1$.
\end{claim}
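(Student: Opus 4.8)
The plan is to take a longest $M$-alternating cycle and show directly that it cannot be short. Let $C=u_0u_1\ldots u_{2m-1}u_0$ be a longest $M$-alternating cycle of $G$ (such a cycle exists, since the degree hypothesis forces a minimum degree of at least $2$, and a connected bipartite graph carrying a perfect matching with $\delta(G)\ge 2$ always has an $M$-alternating cycle, obtained by extending a matching edge through a neighbour of each endpoint). Write $k=\nu/2=|W|=|B|$ and suppose, for contradiction, that $2m\le k$, i.e.\ $m\le k/2$. Put $H=G-C$. Then $H$ is nonempty, $M\cap E(H)$ is a perfect matching of $H$, and $H$ consists of $t:=k-m\ge k/2$ matched pairs. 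The goal is to locate a \emph{nonadjacent} pair whose degree sum is at most $k$, contradicting $d(w)+d(b)\ge\nu/2+1=k+1$.

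The main engine is Lemma~\ref{lemma:CycleToPath}. I would take a \emph{longest} closed $M$-alternating path $P\subseteq H$, say with endpoints $w\in W$, $b\in B$ and $2p$ vertices. Maximality of $P$ forces every $H$-neighbour of $w$ and of $b$ to lie on $P$ (an outside matched pair would extend $P$), so $|N_H(w)|,|N_H(b)|\le p$. If both $w\rightarrow C$ and $b\rightarrow C$, Lemma~\ref{lemma:CycleToPath} gives $|N_C(w)|+|N_C(b)|\le m-p+1$, whence
\[
d(w)+d(b)\le\bigl(m-p+1\bigr)+p+p=m+p+1\le k+1,
\]
using $p\le t=k-m$. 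Equality requires $p=k-m$, i.e.\ $P$ spans all of $H$. Thus, when the two endpoints are nonadjacent and both reach $C$, we already contradict the hypothesis except in the extremal spanning case, and the desired conclusion $2m\ge k+1$ follows generically.

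The remaining configurations are where the work concentrates, and I would treat the base case first. Suppose the longest closed $M$-alternating path in $H$ is a single matching edge; then $H$ is an induced matching with pairs $w_ib_i$ ($1\le i\le t$), so $d(w_i)=1+|N_C(w_i)|$ and $d(b_i)=1+|N_C(b_i)|$, and the first part of Lemma~\ref{lemma:CycleToPath} (applied to each edge $w_ib_i$) gives $|N_C(w_i)|+|N_C(b_i)|\le m$. For $i\ne j$ the vertices $w_i,b_j$ are nonadjacent, so $d(w_i)+d(b_j)\ge k+1$; summing over all ordered pairs $i\ne j$ yields $\sum_i\bigl(d(w_i)+d(b_i)\bigr)\ge t(k+1)$, while the degree expressions give $\sum_i\bigl(d(w_i)+d(b_i)\bigr)\le t(m+2)$. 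Hence $m+2\ge k+1$, i.e.\ $m\ge k-1$, contradicting $m\le k/2$ once $k\ge 3$ (note $t\ge 2$ in this range, so the pairs $i\ne j$ exist). This averaging argument cleanly disposes of the otherwise awkward situation in which all endpoints are mutually adjacent only through $M$.

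The part I expect to be the real obstacle is the combination of the non-spanning failures of the generic bound: (a) an endpoint of $P$ with \emph{no} neighbour on $C$, where the refined estimate of Lemma~\ref{lemma:CycleToPath} degrades to $|N_C(w)|+|N_C(b)|\le m$ and the naive sum no longer beats $k+1$; (b) the case in which $H$ contains an $M$-alternating cycle $C_1$ (so $P$ closes up), forcing the use of Lemma~\ref{lemma:CycleToCycle} to bound $e(C,C_1)$, since $|C_1|\le|C|$; and (c) the equality/extremal case $p=k-m$. For (a) I would exploit the connectivity of $G$ together with the strong lower bounds $d(u)\ge k+1-d(w)$ forced on the $B$-vertices of $C$ that are nonadjacent to such a $w$, either adapting the choice of endpoints so that both reach $C$ or producing a longer alternating cycle; for (b) I would run the $e(C,C_1)$ count from Lemma~\ref{lemma:CycleToCycle} in parallel with the degree sums; and for (c) I would examine the spanning configuration directly, rerouting along $C$ and $P$ to build an alternating cycle of length exceeding $2m$. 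Assembling these cases so that every graph satisfying the hypothesis — including the eventual exceptional families — still yields an $M$-alternating cycle of length at least $\nu/2+1$ is the delicate bookkeeping at the heart of the claim.
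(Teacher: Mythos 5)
Your proposal is not a complete proof: the three configurations you label (a), (b), (c) are exactly where the difficulty of your chosen strategy lives, and for each of them you offer only an intention (``I would exploit\ldots'', ``I would run\ldots'', ``I would examine\ldots''), not an argument. The most serious is (b), the case where the longest closed $M$-alternating path $P$ in $H=G-C$ has \emph{adjacent} endpoints: then $P$ closes into a cycle $C_1$, and the hypothesis, which speaks only of nonadjacent pairs, tells you nothing about $P$'s endpoints. The counting you propose there does not close on its own: vertices of $C$ may have degree as large as $k$, vertices of $H$ may have up to $t=k-m\ge k/2$ neighbours inside $H$, and combining Lemma~\ref{lemma:CycleToCycle} with the degree-sum condition yields only conclusions of the form ``every $B$-vertex of $C_1$ sends at least one edge to $C$'' --- the inequalities come out one short of a contradiction, precisely because $C$ being short makes $H$ enormous. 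Note also that your ``generic'' case is weaker than you state: $d(w)+d(b)\le k+1$ does not contradict $d(w)+d(b)\ge k+1$, so even there you only win when $p<k-m$ strictly. (Incidentally, your equality case (c) has a one-line resolution you missed: equality forces $|N_H(w)|=p$, i.e.\ $w$ is adjacent to \emph{every} $B$-vertex of $P$, in particular to $b$, contradicting $wb-$; this is exactly the trick the paper uses right after display (\ref{eqn:degreesum}) in its main proof. But cases (a) and (b) remain genuinely open in your write-up.)

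The paper's proof of this claim is entirely different and far shorter; it never presupposes a longest cycle. Take a longest \emph{closed $M$-alternating path} $P=u_0u_1\ldots u_{2p-1}$ in the whole graph $G$. Maximality of $P$ confines all neighbours of $u_0$ and $u_{2p-1}$ to $P$. If $u_0u_{2p-1}+$, the resulting cycle sends no edge to the rest of $G$ (again by maximality of $P$), hence by connectivity it is an $M$-alternating Hamilton cycle. If $u_0u_{2p-1}-$, then $d(u_0)+d(u_{2p-1})\ge \nu/2+1$, so the endpoint of larger degree, say $u_0$, has $d(u_0)\ge(\nu/2+1)/2$; since all its neighbours lie on $P$, its farthest neighbour $u_{2i-1}$ along $P$ satisfies $i\ge d(u_0)$, and $u_0Pu_{2i-1}u_0$ is an $M$-alternating cycle of length $2i\ge\nu/2+1$. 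This chord-closing step applied to a longest path in $G$ itself is the idea your sketch lacks; with it, there is no short longest cycle to argue about, and your cases (a), (b), (c) never arise.
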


\begin{proof}
Let $P=u_0u_1\ldots u_{2p-1}$ be a longest closed
$M$-alternating path in $G$, then, all neighbors of $u_0$ and
$u_{2p-1}$ in $G$ should be on $P$.

If $u_0u_{2p-1}+$, then we obtain a cycle $C=u_0u_1\ldots
u_{2p-1}u_0$. Since $P$ is the longest, $e(V(C),V(G-C))=0$.
However, $G$ is connected, so $C$ must be an $M$-alternating
Hamilton cycle and the claim holds.

If $u_0u_{2p-1}-$, by our condition, $d(u_0)+d(u_{2p-1})\geq
\nu/2+1$. Without lost of generality, assume that $d(u_0)\geq
d(u_{2p-1})$ and let $u_{2i-1}$ be the neighbor of $u_0$ with the
maximum $i$, $1\leq i \leq p$. Then, $i\geq (\nu/2+1)/2$ and $u_0P
u_{2i-1}u_0$ is an $M$-alternating cycle with length at least $2i
\geq \nu/2+1$. This proves our claim.
\end{proof}

Now let $C=u_0u_1\ldots u_{2m-1}u_0$ be a longest $M$-alternating
cycle in $G$, where $u_{2i}\in W$, $u_{2i-1}\in B$ and
$u_{2i}u_{2i+1}\in M$. Let $G_1=G-C$. Denote the neighborhood and
degree of $v\in V(G_1)$ in $G_1$ by $N_1(v)$ and $d_1(v)$. By
Claim \ref{|G|/2+1}, $|G_1|\leq \nu/2-1$.

Let $P_1=v_0v_1\ldots v_{2p_1-1}$ be a longest closed
$M$-alternating path in $G_1$, where $v_{2i}\in W$ and
$v_{2i+1}\in B$, $0\leq i \leq p_1-1$. Then $N_1(v_0)$,
$N_1(v_{2p_1-1})\subseteq V(P_1)$, and $d_1(v_0),\ d_1(v_{2p_1-1})\leq
p_1$. Firstly, we prove that $v_0\rightarrow C$ and $v_{2p_1-1}
\rightarrow C$.

If $v_0 \nrightarrow C$ and $v_{2p_1-1}\nrightarrow C$, then
$d(v_0)+d(v_{2p_1-1})\leq 2p_1 \leq |G_1| \leq \nu/2-1$. By the
condition of our theorem, $v_0v_{2p_1-1}+$, and we get a cycle
$C_1=v_0v_1\ldots v_{2p_1-1}v_0$ in $G_1$. By Lemma
\ref{lemma:CycleToCycle}, for any two vertices $u_{2i-1}$ and $u_{2i}$
on $C$, at least one of them, say $u_{2i}\nrightarrow C_1$. Then
$d(u_{2i})\leq \nu/2-p_1$. But then $d(u_{2i})+d(v_{2p_1-1})\leq
\nu/2$, contradicting the condition of the theorem.

If only one of $v_0$ and $v_{2p_1-1}$, say $v_0\rightarrow C$. Let
a neighbor of $v_0$ on $C$ be $u_{2j-1}$, by Lemma
\ref{lemma:CycleToPath}, $u_{2j}$ sends no edge to $P_1$, so
$d(u_{2j})\leq \nu/2-p_1$, and $d(u_{2j})+d(v_{2p_1-1})\leq
\nu/2$, again contradicting the condition of the theorem.

Therefore $v_0\rightarrow C$ and $v_{2p_1-1}\rightarrow
C$.$\newline$

By Lemma \ref{lemma:CycleToPath},
$|N_C(v_0)|+|N_C(v_{2p_1-1})|\leq m-p_1+1$. Therefore,
\begin{eqnarray} \label{eqn:degreesum}d(v_0)+d(v_{2p_1-1})
&\leq& 2p_1+(m-p_1+1) \nonumber \\
&=& m+p_1+1 \nonumber \\
&\leq& m+|G_1|/2+1 \nonumber \\
&=& \nu/2+1. \end{eqnarray} If $v_0v_{2p_1-1}-$, then by our
condition, $d(v_0)+d(v_{2p_1-1})\geq \nu/2+1$ and hence equalities
in (\ref{eqn:degreesum}) hold. But then we must have
$v_0v_{2p_1-1}+$, a contradiction. So $v_0v_{2p_1-1}+$, and we get
a cycle $C_1=v_0v_1\ldots v_{2p_1-1}v_0$. $\newline$

If $G_1-C_1$ is nonempty, then there exists an edge $wb\in M\cap
E(G_1-C_1)$, where $w\in W$ and $b\in B$. By the choice of $P_1$,
$e(V(C_1), V(G_1-C_1))=0$.  By our condition,
$d(w)+d(b)+d(v_0)+d(v_{2p_1-1})\geq 2(\nu/2+1)=\nu+2$. However, by
Lemma \ref{lemma:CycleToPath}, $|N_C(w)|+|N_C(b)|\leq m$, and
hence $d(w)+d(b)\leq |G_1|-2p_1+m$, while
$d(v_0)+d(v_{2p_1-1})\leq m+p_1+1$ by (\ref{eqn:degreesum}),
therefore $d(w)+d(b)+d(v_0)+d(v_{2p_1-1})\leq |G_1|+2m-p_1+1 =
\nu-p_1+1 < \nu+1$, a contradiction. Hence, $G_1-C_1$ must be
empty, then $|G_1|=2p_1$ and $C_1$ is an $M$-alternating Hamilton
cycle of $G_1$. $\newline$

{
We claim that every vertex of $G_1$ sends some edges to $C$.}
{
Let $v$ be any vertex in $G_1$. Since $G_1$ has an $M$-alternating Hamilton cycle $C_1$, we can choose a closed $M$-alternating Hamilton path $P_1$ of $G_1$ starting from $v$. By above discussion, $v$ sends some edges to $C$.}
$\newline$

For a longest $M$-alternating cycle $C$ in $G$, we call the graph
$G_1=G-C$ a critical graph (with respect to $C$) and a closed
$M$-alternating Hamilton path of $G_1$, $P_1=v_0v_1\ldots
v_{2p_1-1}$, where $v_{2i}\in W$ and $v_{2i+1}\in B$, a critical
path, or a critical edge if $|P_1|=2$. For a critical path $P_1$,
we can always find $u_{2s-1}\in N_C(v_0)$ and $u_{2r}\in
N_C(v_{2p_1-1})$, such that $P_2=u_{2s}C^+u_{2r-1}$ is the
shortest. We let $R=u_{2r}C^+u_{2s-1}$.

By Lemma \ref{lemma:CycleToCycle}, $u_{2s}\nrightarrow G_1$ and
$u_{2r-1}\nrightarrow G_1$. Further, for any edge $u_{2i-1}u_{2i}$
on $R$, we must have $e(\{u_{2i-1},\ u_{2i}\},\ \{u_{2s},\
u_{2r-1}\})\leq 1$, or we get an $M$-alternating Hamilton cycle
$$u_{2r}C^+u_{2i-1}u_{2s}C^+u_{2r-1}u_{2i}C^+u_{2s-1}v_0P_1v_{2p_1-1}u_{2r}.$$
Hence,
\begin{eqnarray}\label{eqn:u2s+u2r-1}d(u_{2s})+d(u_{2r-1})\leq
|P_2|+2+(|R|-2)/2=|P_2|+|R|/2+1.\end{eqnarray}
Moreover,
\begin{eqnarray} \label{eqn:v0+v2p1-1}
d(v_0)+d(v_{2p_1-1})\leq 2p_1+2+(|R|-2)/2=2p_1+|R|/2+1.
\end{eqnarray}
So,
\begin{eqnarray}\label{eqn:sumof4leq}d(u_{2s})+d(u_{2r-1})+d(v_0)+d(v_{2p_1-1})\leq
2p_1+|P_2|+|R|+2=\nu+2.\end{eqnarray} However $v_0u_{2r-1}-$ and
$v_{2p_1-1}u_{2s}-$, by our condition,
\begin{eqnarray}\label{eqn:sumof4geq}d(u_{2s})+d(u_{2r-1})+d(v_0)+d(v_{2p_1-1})\geq 2(\nu/2+1)=\nu+2.\end{eqnarray}
So all equalities in (\ref{eqn:u2s+u2r-1}), (\ref{eqn:v0+v2p1-1}),
(\ref{eqn:sumof4leq}) and (\ref{eqn:sumof4geq}) must hold. To get
equality in (\ref{eqn:v0+v2p1-1}),
$v_0$ (respectively $v_{2p_1-1}$) must be adjacent to all vertices in $V(G_1) \cap B$ (respectively $V(G_1) \cap W$).
and for any edge $u_{2i-1}u_{2i}$ on $R$,
$e(\{u_{2i-1}, u_{2i}\}, \{v_{0},v_{2p_1-1}\})=1$. Therefore, for
a critical path $P_1=v_0v_1\ldots v_{2p_1-1}$, we find two closed
$M$-alternating paths $R$ and $P_2$ as segments of $C$, such that
$V(C)=V(R)\cup V(P_2)$, where the ending vertices of $R$ is adjacent
to $v_0$ and $v_{2p_1-1}$, respectively, and for any edge
$u_{2i-1}u_{2i}\notin M$ on $R$, $e(\{u_{2i-1}, u_{2i}\},
\{v_{0},v_{2p_1-1}\})=1$, while $e(V(P_2),
\{v_{0},v_{2p_1-1}\})=0$. We call $P_2$ the opposite path, and $R$
the central path for $P_1$.

Furthermore, to get equality in (\ref{eqn:u2s+u2r-1}),
$u_{2s}$ (respectively $u_{2r-1}$) must be adjacent to all vertices in $V(P_2)\cap B$ (respectively $V(P_2)\cap W$).
In particular $u_{2s}u_{2r-1}+$.

\begin{claim} \label{claim:G_1 complete}
A critical graph $G_1$ is complete bipartite.
\end{claim}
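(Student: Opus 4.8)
The plan is to leverage two things already in hand: first, that the critical graph $G_1$ possesses an $M$-alternating Hamilton cycle $C_1 = v_0 v_1 \cdots v_{2p_1-1} v_0$; and second, the endpoint property just proved for an \emph{arbitrary} critical path $P_1 = v_0 \cdots v_{2p_1-1}$, namely that the starting vertex $v_0 \in W$ is adjacent to every vertex of $V(G_1) \cap B$ while the ending vertex $v_{2p_1-1} \in B$ is adjacent to every vertex of $V(G_1) \cap W$. The guiding idea is that a critical path is exactly a closed $M$-alternating Hamilton path of $G_1$, and such a path is obtained simply by deleting a non-matching edge of $C_1$; so by rotating the cut around $C_1$ we can make every vertex of $G_1$ an endpoint of some critical path, and then apply the endpoint property to each.

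First I would record the partition along $C_1$: the $W$-vertices of $G_1$ are $v_0, v_2, \ldots, v_{2p_1-2}$ and the $B$-vertices are $v_1, v_3, \ldots, v_{2p_1-1}$, with the matching edges being the $v_{2i}v_{2i+1}$ and the non-matching edges the $v_{2i-1}v_{2i}$ (indices mod $2p_1$). For each $j$ with $0 \le j \le p_1 - 1$, I would delete the non-matching edge $v_{2j-1}v_{2j}$ from $C_1$; the surviving path $v_{2j}v_{2j+1}\cdots v_{2j-1}$ begins and ends with matching edges, hence is a closed $M$-alternating Hamilton path of $G_1$ --- a critical path whose starting vertex is $v_{2j}$ and whose ending vertex is $v_{2j-1}$. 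Invoking the endpoint property for this critical path yields that $v_{2j}$ is adjacent to every vertex of $V(G_1) \cap B$.

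Letting $j$ run through $0, 1, \ldots, p_1 - 1$, the vertex $v_{2j}$ runs through all of $V(G_1) \cap W$, so every vertex of $V(G_1) \cap W$ is adjacent to every vertex of $V(G_1) \cap B$; that is, $G_1$ is complete bipartite. Symmetrically, the ending vertices $v_{2j-1}$ exhaust $V(G_1) \cap B$ and give the same conclusion.

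The only delicate point --- and the step I would double-check --- is the bookkeeping of the cut: one must be sure that removing a \emph{non-matching} edge of $C_1$ leaves a path that both starts and ends with \emph{matching} edges, so that it genuinely qualifies as a critical path and the earlier endpoint analysis transfers verbatim. This is automatic, because the matching and non-matching edges strictly alternate along $C_1$ and the two ends of the removed edge are each incident with a matching edge of the remaining path. In particular, no degree-sum inequalities have to be re-derived, since the endpoint adjacency was already established in full generality for every critical path.
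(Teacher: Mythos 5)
Your proof is correct and is essentially the paper's own argument: the paper likewise observes that since $C_1$ is an $M$-alternating Hamilton cycle of $G_1$, the critical path $P_1$ can be chosen to start at any prescribed vertex $v$, and then the equality in (\ref{eqn:v0+v2p1-1}) forces $v$ to be adjacent to the whole opposite part. Your write-up merely makes explicit the rotation step (deleting a non-matching edge of $C_1$) that the paper states in one line.
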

\begin{proof}

{
Since $C_1$ is an $M$-alternating Hamilton cycle of $G_1$, for any vertex $v\in V(G_1)$, $P_1$ can be chosen so that it is starting from $v$. By the equality of (\ref{eqn:v0+v2p1-1}), $v$ sends edges to every vertex in the opposite part of $G_1$.
}
\end{proof}

Let $G_2=G[V(P_2)]$. We call $G_2$ the opposite graph. We choose
$C$, $G_1$ and $P_1$ so that the opposite path $P_2$ is the
shortest.

\begin{claim} \label{claim:G_1 G_2 disjoint}
$e(V(G_1), V(G_2))=0$, and
$u_{2s-1}$ (respectively $u_{2r}$) is adjacent to every vertex in $V(G_1)\cap W$ (respectively $V(G_1)\cap B$).
\end{claim}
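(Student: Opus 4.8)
The plan is to pit the maximality of the longest $M$-alternating cycle $C$ and the minimality of the opposite path $P_2$ against the rigid picture already forced by the equality cases: $G_1$ is complete bipartite and carries an $M$-alternating Hamilton cycle $C_1$; $v_0$ and $v_{2p_1-1}$ are joined to every vertex of $V(G_1)\cap B$ and of $V(G_1)\cap W$ respectively; $u_{2s}$ and $u_{2r-1}$ are joined to every vertex of $V(P_2)\cap B$ and of $V(P_2)\cap W$ respectively; and $u_{2s}u_{2r-1}+$. The first thing I would record is that $G_2=G[V(P_2)]$ likewise carries a spanning $M$-alternating cycle $C_2=u_{2s}P_2u_{2r-1}u_{2s}$, obtained from $P_2$ by adding the edge $u_{2s}u_{2r-1}$. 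Thus $G_1$ and $G_2$ are two blobs each having a spanning $M$-alternating cycle, attached to the two ends $u_{2s-1},u_{2r}$ of the central path $R$: on the $G_1$ side through $v_0u_{2s-1}+$ and $v_{2p_1-1}u_{2r}+$, and on the $G_2$ side through the cycle edges $u_{2s-1}u_{2s}$ and $u_{2r-1}u_{2r}$ of $C$.

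For $e(V(G_1),V(G_2))=0$, I would argue by contradiction from the maximality of $C$. If some edge $xy$ joined $x\in V(G_1)$ to $y\in V(G_2)$, I would splice $G_1$, $R$ and $G_2$ into a single $M$-alternating cycle on $V(G_1)\cup V(R)\cup V(G_2)=V(G_1)\cup V(C)$, which has $2p_1+2m>2m=|C|$ vertices, contradicting the choice of $C$. Concretely, with $x\in V(G_1)\cap B$ and $y\in V(G_2)\cap W$, the intended cycle runs along an $M$-alternating Hamilton path of $G_1$ from $v_0$ to $x$, crosses $xy$, runs along an $M$-alternating Hamilton path of $G_2$ from $y$ to $u_{2r-1}$, takes the edge $u_{2r-1}u_{2r}$, traverses $R=u_{2r}C^+u_{2s-1}$, and closes with $u_{2s-1}v_0$; a parity check shows every inserted joining edge lies outside $M$, so the whole cycle is $M$-alternating, and the other orientation of $xy$ is symmetric. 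The step I expect to be the main obstacle is producing the two prescribed-endpoint Hamilton paths: in $G_1$ this is immediate from complete bipartiteness (the only failure being when the two prescribed ends are matched to each other, which is dispatched by the symmetric routing or by restarting the critical path), whereas in $G_2$ the required path must be built from the Hamilton cycle $C_2$ together with the chords furnished by the two universal vertices $u_{2s}$ and $u_{2r-1}$, and keeping such a path available for every admissible endpoint $y$ is the delicate point.

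For the adjacency statements I would use the minimality of $P_2$, together with the fact, now available from the first part, that every vertex of $G_1$ has all its $C$-neighbours on $R$ (it avoids $G_2=G[V(P_2)]$). Fix $w\in V(G_1)\cap W$. Since $G_1$ is complete bipartite, there is a critical path $\hat P_1$ of $G_1$ starting at $w$ and ending at $v_{2p_1-1}$; the whole equality analysis leading to (\ref{eqn:v0+v2p1-1}) is valid for $\hat P_1$ in place of $P_1$, and minimality forces its central path $\hat R$ to satisfy $|\hat R|\le|R|$, whence $d(w)\le d(v_0)$. Moreover $\hat R$ runs from the earliest $v_{2p_1-1}$-neighbour on $R$, namely $u_{2r}$, to the latest $w$-neighbour on $R$; so once I know $|\hat R|=|R|$ it follows that the latest $w$-neighbour is exactly $u_{2s-1}$, i.e. $u_{2s-1}w+$, and the claim about $u_{2r}$ and $V(G_1)\cap B$ is symmetric. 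The crux, and the hardest step, is to promote $|\hat R|\le|R|$ to equality: this amounts to showing that if $w$'s last neighbour on $R$ preceded $u_{2s-1}$ one could exhibit a strictly shorter opposite path, and it is here that the extremal choice of $P_2$ — rather than any degree inequality, which by itself leaves slack — must be exploited in full.
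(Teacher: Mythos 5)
Your plan for the first assertion has a fatal gap at exactly the point you flag as ``delicate'': the closed $M$-alternating Hamilton path of $G_2$ from $y$ to $u_{2r-1}$ (or, in the symmetric routing, from $y$ to $u_{2s}$) does not exist in general at this stage of the argument. All you know about $G_2$ here is the path $P_2$, the closing edge $u_{2s}u_{2r-1}$, and the chords from $u_{2s}$ to $V(P_2)\cap B$ and from $u_{2r-1}$ to $V(P_2)\cap W$; completeness of $G_2$ is only Claim \ref{claim:G_2 complete}, proved \emph{after} (and using) the present claim. With only these edges the path you need cannot be built: the terminal vertex $u_{2r-1}$ must be entered by its matching edge $u_{2r-2}u_{2r-1}$, so its chords are unusable, and the starting vertex $y=u_{2j}$ uses only its matching edge; then the segment $L=\{u_{2s},\ldots,u_{2j-1}\}$ below $y$ is joined to the rest of $G_2$ by usable edges only through $u_{2s}$, which must spend one of its two path edges on its own matching edge $u_{2s}u_{2s+1}\in L$ --- so at most one edge of the path crosses out of $L$, while any Hamilton path with both ends outside $L$ needs at least two. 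Hence such a path exists only when $y=u_{2s}$ (resp.\ $y=u_{2r-1}$), and those vertices send no edges to $G_1$ by Lemma \ref{lemma:CycleToCycle}, so the splice can never be performed. The paper is forced onto a different road: it first uses the degree condition on the non-adjacent pairs $\{w,u_{2r-1}\}$, $\{b,u_{2s}\}$ to show all vertices of $V(G_1)\cap W$ (resp.\ $V(G_1)\cap B$) have equal degree and all opposite paths have equal length; then, if some $b\in V(G_1)\cap B$ had a neighbour $u_{2r'}$ on $P_2$, bounding $d(u_{2s})+d(u_{2r'-1})$ along the enlarged central path $u_{2r'}C^+u_{2s-1}$ gives a \emph{strict} inequality which, added to $d(v_0)+d(b)=2p_1+|R|/2+1$, contradicts the degree condition for the non-adjacent pairs $\{b,u_{2s}\}$ and $\{v_0,u_{2r'-1}\}$. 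No longer cycle is ever exhibited; the contradiction is purely a degree count.

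Your plan for the adjacency statements is inverted in the same way. Minimality of $P_2$ gives $|\hat P_2|\geq|P_2|$, i.e.\ $|\hat R|\leq|R|$, and what you must rule out is $|\hat R|<|R|$; a \emph{longer} opposite path for another critical path contradicts nothing about the extremal choice of $P_2$, so ``exploiting the extremal choice in full'' cannot close this. The equality comes precisely from the degree hypothesis you set aside: if $|\hat R|<|R|$ then $d(w)+d(b)\leq 2p_1+|\hat R|/2+1<2p_1+|R|/2+1$, and adding $d(u_{2s})+d(u_{2r-1})\leq|P_2|+|R|/2+1$ violates $d(w)+d(u_{2r-1})+d(b)+d(u_{2s})\geq\nu+2$, which holds because these pairs are non-adjacent by Lemma \ref{lemma:CycleToCycle}. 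Once that equality is in hand, the paper finishes as you intend: by the first part the opposite path of any critical path must contain $P_2$, and having the same length it equals $P_2$, whence $wu_{2s-1}+$ and $bu_{2r}+$. Finally, your fix of ``restarting the critical path'' for the matched-endpoint degeneracy in $G_1$ is circular for the same reason: it presumes other critical paths have central paths ending at $u_{2s-1}$ and $u_{2r}$, which is exactly the statement being proved.
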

\begin{proof}
If $|G_1|=2$ the conclusion holds. We assume that $|G_1|\geq 4$.

For any closed $M$-alternating Hamilton path $P_1^\prime$ of $G_1$ with ending vertices $w\in W$ and $b\in B$, we can find an opposite path $P_2^\prime$ and a central path $R^\prime$ for $P_1^\prime$. Since $P_2$ is chosen as the shortest, $|P_2^\prime|\geq |P_2|$ and $|R^\prime| \leq |R|$.
Similar to (\ref{eqn:v0+v2p1-1}) we have
\begin{eqnarray} \label{eqn:w+b}
d(w)+d(b)\leq 2p_1+|R^\prime|/2+1 \leq 2p_1+|R|/2+1.
\end{eqnarray}
Together with (\ref{eqn:u2s+u2r-1}), we have
\begin{eqnarray}\label{eqn:sumof4leqwb}d(u_{2s})+d(u_{2r-1})+d(w)+d(b)\leq
\nu+2.\end{eqnarray}

Since $u_{2r}$ and $u_{2s-1}$ send edges to $G_1$, which has an $M$-alternating Hamilton cycle, by Lemma \ref{lemma:CycleToCycle}, $u_{2r-1}\nrightarrow G_1$ and $u_{2s} \nrightarrow G_1$, and hence $wu_{2r-1}-$ and $bu_{2s}-$. By the condition given,
\begin{eqnarray}\label{eqn:sumof4geqwb}d(u_{2s})+d(u_{2r-1})+d(w)+d(b)\geq 2(\nu/2+1)=\nu+2.\end{eqnarray}

Hence all equalities in (\ref{eqn:w+b}), (\ref{eqn:sumof4leqwb}) and (\ref{eqn:sumof4geqwb}) must hold. Therefore $|R|=|R^\prime|$, $|P_2^\prime|=|P_2|$, $d(w)=d(v_0)=\nu/2+1-d(u_{2r-1})$ and $d(b)=d(v_{2p_1-1})=\nu/2+1-d(u_{2s})$. In other words, all opposite paths (respectively all central paths) have the same length. Since any vertex in $G_1$ can be an ending vertex of an $M$-alternating Hamilton path, all vertices in $V(G_1)\cap W$ have the same degree $\nu/2+1-d(u_{2r-1})$, and all vertices in $V(G_1)\cap B$ have the same degree $\nu/2+1-d(u_{2s})$.

Let $b\neq v_{2p_1-1}$ be a vertex in $V(G_1)\cap B$, assume that $b$ has a neighbor $u_{2r^\prime}$ on $P_2$. Since $G_1$ is complete bipartite we can always find a closed $M$-alternating path $P_1^{\prime\prime}$ connecting $v_0$ and $b$ in $G_1$. (Note that $P_1^{\prime\prime}$ need not to be Hamilton. If $b=v_1$, $P_1^{\prime\prime}$ can only be the edge $v_0v_1$.) Let $P_2^{\prime\prime}=u_{2s}C^+u_{2r^\prime-1}$ and $R^{\prime\prime}=u_{2r^\prime} C^+ u_{2s-1}$. For any vertex pair $\{u_{2i-1}, u_{2i}\}$ on the path $R^{\prime\prime}$, we have $e(\{u_{2i-1},u_{2i}\},\{u_{2s},u_{2r^\prime-1}\})\leq 1$, or we get an $M$-alternating cycle
$$u_{2r^\prime}C^+u_{2i-1}u_{2s}C^+u_{2r^\prime-1}u_{2i}C^+u_{2s-1}v_0P_1^{\prime\prime}bu_{2r^\prime},$$
which is longer than $C$.
Therefore, $$d(u_{2s})+d(u_{2r^\prime-1}) \leq |P_2^{\prime\prime}|+2+(|R^{\prime\prime}|-2)/2=|P_2^{\prime\prime}|+|R^{\prime\prime}|/2+1 < |P_2|+|R|/2+1.$$ By $d(v_0)+d(b)=d(v_0)+d(v_{2p_1-1})=2p_1+|R|/2+1$, we have $d(u_{2s})+d(u_{2r^\prime-1})+d(v_0)+d(b) < (|P_2|+|R|/2+1) +(2p_1+|R|/2+1)= \nu+2$. However, since $u_{2s}b-$ and $u_{2r^\prime-1}v_0-$, by our condition, $d(u_{2s})+d(u_{2r^\prime-1})+d(v_0)+d(b) \geq \nu+2$, a contradiction. Hence $b$, and similarly any $w\in V(G_1)\cap W$, must not have any neighbor on $P_2$. That is, $e(V(G_1), V(G_2))=0$.

For any closed $M$-alternating Hamilton path $P_1^\prime$ of $G_1$ with ending vertices $w\in W$ and $b \in B$, let $P_2^\prime$ be an opposite path of it. Since $w$ and $b$ send no edges to $P_2$, $P_2$ must be part of $P_2^\prime$. However, all opposite paths have the same length, so $|P_2^\prime|=|P_2|$, and therefore $P_2^\prime = P_2$. Then, $wu_{2s-1}+$ and $bu_{2r}+$. Since any vertex in $G_1$ can be an ending vertex of a closed $M$-alternating Hamilton path of $G_1$, we prove the second part of the claim.
\end{proof}

\begin{claim} \label{claim:G_2 complete}
$G_2$ is complete bipartite,
and $u_{2s-1}$ (respectively $u_{2r}$) is adjacent to every vertex in $V(G_2)\cap W$ (respectively $V(G_2)\cap B$).
\end{claim}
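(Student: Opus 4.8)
The plan is to mirror the proof of Claim \ref{claim:G_1 complete}, with the opposite graph $G_2$ now playing the role that the critical graph $G_1$ played there. Recall that we have already shown $u_{2s}u_{2r-1}+$, so $P_2$ together with this edge is an $M$-alternating Hamilton cycle of $G_2$; consequently every vertex of $V(G_2)\cap W$ arises as the initial vertex of a closed $M$-alternating Hamilton path of $G_2$ (delete from this Hamilton cycle the non-matching edge incident to the chosen vertex). The goal is to prove that the endpoints $w'\in W$ and $b'\in B$ of any such path $P_2'$ are adjacent, respectively, to all of $V(G_2)\cap B$ and all of $V(G_2)\cap W$. Ranging over all vertices then yields that $G_2$ is complete bipartite, and tracking the neighbours of $w',b'$ on $R$ will yield the stated adjacencies of $u_{2s-1}$ and $u_{2r}$.

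First I would establish the analogue of inequality (\ref{eqn:u2s+u2r-1}) for an \emph{arbitrary} closed $M$-alternating Hamilton path $P_2'$ of $G_2$ with endpoints $w'\in W$ and $b'\in B$, namely $d(w')+d(b')\le |P_2|+|R|/2+1$. By Claim \ref{claim:G_1 G_2 disjoint} we have $e(V(G_1),V(G_2))=0$, so neither $w'$ nor $b'$ sends an edge to $G_1$, and inside $G_2$ they contribute at most $|P_2|$ together. The main work is the bound on $R$: for every non-matching edge $u_{2i-1}u_{2i}$ of $R$ we cannot have both $u_{2i-1}w'+$ and $u_{2i}b'+$, since otherwise splicing the two arcs of $R$ at this edge together with $P_2'$ and the critical path $P_1$ produces the $M$-alternating Hamilton cycle
$$u_{2r}C^+u_{2i-1}\,w'P_2'b'\,u_{2i}C^+u_{2s-1}\,v_0P_1v_{2p_1-1}\,u_{2r}$$
of $G$, a contradiction. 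Counting these forbidden coincidences over all non-matching edges of $R$ and adding the two end vertices $u_{2r},u_{2s-1}$ of $R$ gives $|N_R(w')|+|N_R(b')|\le |R|/2+1$, hence the claimed degree bound.

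Next I would pair $w'$ with $v_{2p_1-1}$ and $b'$ with $v_0$. Since these pairs straddle $G_1$ and $G_2$ they are non-adjacent, so the hypothesis gives $d(w')+d(v_{2p_1-1})\ge \nu/2+1$ and $d(b')+d(v_0)\ge \nu/2+1$. Adding these to (\ref{eqn:v0+v2p1-1}) and to the bound just obtained, and using $2p_1+|P_2|+|R|=\nu$, the four degrees sum to exactly $\nu+2$ from both directions, so every inequality is tight. Equality in the within-$G_2$ estimate forces $w'$ to be adjacent to all of $V(G_2)\cap B$ and $b'$ to all of $V(G_2)\cap W$; since every vertex of $G_2$ occurs as such an endpoint, $G_2$ is complete bipartite. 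Equality in the $R$-count forces each non-matching edge of $R$ to contribute exactly one coincidence and, crucially, the two end vertices to be used, i.e. $u_{2s-1}w'+$ and $u_{2r}b'+$; ranging over all endpoints then gives that $u_{2s-1}$ is adjacent to every vertex of $V(G_2)\cap W$ and $u_{2r}$ to every vertex of $V(G_2)\cap B$.

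I expect the main obstacle to be the splicing-and-counting step: one must check that the exhibited cycle is genuinely $M$-alternating and Hamiltonian for an arbitrary endpoint pair (not merely the distinguished pair $u_{2s},u_{2r-1}$), handle the short case $|R|=2$ separately, and carry out the parity bookkeeping on $R$ so that precisely the bound $|R|/2+1$ emerges. It is worth noting that, in contrast to the earlier claims, this argument never requires $G_2$ to be a critical graph, so it remains valid even when $|G_1|\neq|G_2|$.
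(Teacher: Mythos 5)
Your proposal is correct and takes essentially the same approach as the paper: the paper likewise uses the cycle $C_2=u_{2s}C^+u_{2r-1}u_{2s}$ to realize each pair $\{u_{2j-1},u_{2j}\}$ on $P_2$ as the endpoints of a closed $M$-alternating Hamilton path of $G_2$, substitutes these for $u_{2r-1},u_{2s}$ in (\ref{eqn:u2s+u2r-1}), (\ref{eqn:sumof4leq}) and (\ref{eqn:sumof4geq}) (with $e(V(G_1),V(G_2))=0$ supplying the non-adjacencies needed to invoke the degree-sum condition), and reads off the stated adjacencies from the forced equalities. Your write-up merely makes explicit the splicing and counting that the paper compresses into the phrase ``we can replace \ldots and all equalities must hold.''
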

\begin{proof}

By above discussions, $u_{2s}u_{2r-1}+$ and we have a cycle
$C_2=u_{2s}C^+u_{2r-1}u_{2s}$. Since $e(V(G_1), V(G_2))=0$, for
every edge $u_{2j-1}u_{2j}$ on $P_2$, where $s+1\leq j \leq r-1$,
we can replace $u_{2r-1}$ with $u_{2j-1}$ and $u_{2s}$ with
$u_{2j}$ in (\ref{eqn:u2s+u2r-1}), (\ref{eqn:sumof4leq}) and
(\ref{eqn:sumof4geq}), and all equalities must hold. So,
$u_{2j-1}$ (respectively $u_{2j}$) must be adjacent to all vertices in $V(P_2)\cap W$ (respectively $V(P_2)\cap B$),
$u_{2j-1}u_{2r}+$ and
$u_{2j}u_{2s-1}+$, therefore the claim holds.
\end{proof}

For convenience we change some notations henceforth. We let
$|G_2|=2p_2$ and the vertices of $G_2$ be $v_0^\prime$,
$v_1^\prime$, \ldots, $v_{2p_2-1}^\prime$, where $v_{2j}^\prime
v_{2j+1}^\prime\in M$, for $0\leq j\leq p_2-1$, and let
$R=u_0u_1\ldots u_{2r-1}$.

Now we discuss the situations case by case, with respect to the length of $R$
and the distribution of edges between $R$ and $G_i$, $i=1,2$.

\noindent\emph{\textbf{Case 1}}. $|R|=2$.

Then $R=u_0u_1$. By Claim \ref{claim:G_1 G_2 disjoint} and Claim
\ref{claim:G_2 complete},  For any $0\leq i\leq p_1-1$ and $0\leq
j \leq p_2-1$, $u_0v_{2i+1}+$, $u_0v_{2j+1}^\prime+$, $u_1v_{2i}+$
and $u_1v_{2j}^\prime+$. Therefore $G\in \mathcal{G}_1$ and $M\in
\mathcal{M}_1$.

\noindent\emph{\textbf{Case 2}}. $|R|\geq 4$.

\begin{claim}\label{claim:toG1G2}
For $j=1,\ 2$, and every edge $u_{2i-1}u_{2i}$, $1\leq i \leq
r-1$, exactly one of $u_{2i-1}\rightarrow G_j$ and
$u_{2i}\rightarrow G_j$ holds. Furthermore, if
$u_{2i-1}\rightarrow G_j$ (respectively $u_{2i}\rightarrow G_j$),
it is adjacent to all vertices in $V(G_j)\cap W$ (respectively $V(G_j)\cap B$).
\end{claim}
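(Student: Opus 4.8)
My plan is to prove the claim by establishing, for each $j\in\{1,2\}$ and each interior non-matching edge $u_{2i-1}u_{2i}$ of $R$ (so $1\le i\le r-1$, where I write $|R|=2r$), three statements: (i) at most one of $u_{2i-1},u_{2i}$ sends edges to $G_j$; (ii) at least one of them does; and (iii) the one that does is adjacent to every vertex of $V(G_j)\cap W$, respectively of $V(G_j)\cap B$, as appropriate. I would deduce (ii) and (iii) for both values of $j$ simultaneously from a double count of the edges between $R$ and $G_j$, once (i) is in hand; so the crux is (i), which is free for $j=1$ but genuinely hard for $j=2$.

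For (i) with $j=1$, note that $G_1=G-C$ contains the $M$-alternating Hamilton cycle $C_1$; applying Lemma~\ref{lemma:CycleToCycle} to the pair $\{u_{2i-1},u_{2i}\}$ on $C$ immediately gives that at least one of them sends no edge to $C_1=G_1$. For (i) with $j=2$, suppose both $u_{2i-1}$ and $u_{2i}$ send edges to $G_2$, and choose $x\in N(u_{2i-1})\cap V(G_2)$ and $y\in N(u_{2i})\cap V(G_2)$. If $x,y$ can be picked with $xy\notin M$, then since $G_2$ is complete bipartite it has an $M$-alternating Hamilton path $Q$ from $x$ to $y$; recalling from Claims~\ref{claim:G_1 G_2 disjoint} and~\ref{claim:G_2 complete} that $v_0$ is adjacent to $u_{2r-1}$, that $v_{2p_1-1}$ is adjacent to $u_0$, and that $P_1$ is an $M$-alternating Hamilton path of $G_1$, the cycle
\[
u_0C^{+}u_{2i-1}\,x\,Q\,y\,u_{2i}C^{+}u_{2r-1}\,v_0P_1v_{2p_1-1}\,u_0
\]
is an $M$-alternating Hamilton cycle of $G$, contradicting the standing assumption that none exists. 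Thus (i) holds whenever such a non-matched choice of $x,y$ is available.

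Granting (i), I would obtain (ii) and (iii) by counting $e(R,G_j)$. Since $G_j$ is complete bipartite of order $2p_j$ and $e(V(G_1),V(G_2))=0$, the degree sum over $G_j$ is $\sum_{g\in V(G_j)}d(g)=2p_j^{2}+e(R,G_j)$. For $j=1$, the equality in~(\ref{eqn:v0+v2p1-1}) together with Claim~\ref{claim:G_1 G_2 disjoint} gives that every vertex of $V(G_1)\cap W$ has degree $d(v_0)$ and every vertex of $V(G_1)\cap B$ has degree $d(v_{2p_1-1})$, with $d(v_0)+d(v_{2p_1-1})=2p_1+r+1$, whence $e(R,G_1)=p_1(r+1)$. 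For $j=2$, applying the degree hypothesis to the non-adjacent pairs between $V(G_2)\cap W$ and $V(G_1)\cap B$, and between $V(G_2)\cap B$ and $V(G_1)\cap W$, yields $\sum_{g\in V(G_2)}d(g)\ge p_2\big(\nu+2-d(v_0)-d(v_{2p_1-1})\big)=p_2(r+2p_2+1)$, hence $e(R,G_2)\ge p_2(r+1)$. Subtracting the $2p_j$ edges contributed by the two ends $u_0,u_{2r-1}$ of $R$ (adjacent to all of $V(G_j)\cap B$ and $V(G_j)\cap W$ respectively), I get $e(R_{\mathrm{int}},G_j)\ge(r-1)p_j$, where $R_{\mathrm{int}}=\{u_1,\dots,u_{2r-2}\}$. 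On the other hand, (i) says each of the $r-1$ interior non-matching edges has at most one connector to $G_j$, and each connector contributes at most $p_j$ edges, so $e(R_{\mathrm{int}},G_j)\le(r-1)p_j$. Equality therefore holds throughout, which forces exactly one connector per edge, each adjacent to the entire relevant colour class: precisely (ii) and (iii).

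The main obstacle is the residual case of (i) for $j=2$, namely when every admissible choice forces $xy\in M$, i.e.\ $u_{2i-1}$ and $u_{2i}$ meet $G_2$ only in a single matched pair $\{x,y\}$ with $xy\in M$. For $p_2\ge2$ there is no $M$-alternating Hamilton path of $G_2$ from $x$ to $y$, so the construction above collapses. Excluding this is equivalent to bounding by $r+1$ the total number of neighbours on $R$ of the two ends of a Hamilton path of $G_2$; I would prove it by a rotation/exchange argument along the longest cycle $C$ — the on-cycle analogue of Lemma~\ref{lemma:CycleToPath}, which we may invoke for the off-cycle graph $G_1$ but not directly for $G_2$ — turning a forbidden pair of crossing chords from $G_2$ into $R$ into a cycle that, after splicing in the external path $P_1$ through $G_1$, is longer than $C$. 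This asymmetry between $G_1=G-C$ and $G_2\subseteq C$ is where essentially all the difficulty of the claim is concentrated.
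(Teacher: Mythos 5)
Your plan matches the paper on two of its three pieces: the paper also obtains ``at most one'' for $G_1$ from Lemma~\ref{lemma:CycleToCycle}, and ``at most one'' for $G_2$ --- in the case where the neighbours $x\in N(u_{2l-1})\cap V(G_2)$, $y\in N(u_{2l})\cap V(G_2)$ can be chosen with $xy\notin M$ --- by exactly your splicing construction. Your counting derivation of (ii) and (iii) from (i) is also sound; it is a summed variant of the paper's pointwise argument, which applies the degree hypothesis to the non-adjacent pairs $v_{2l}v_{2k+1}^\prime$ and $v_{2l+1}v_{2k}^\prime$ and forces equality in the resulting chain of inequalities.

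The genuine gap is the residual case you flag at the end, and the fix you sketch for it cannot work. Suppose $u_{2l-1}$ and $u_{2l}$ meet $G_2$ only in a matched pair $\{v_{2s}^\prime,v_{2s+1}^\prime\}$, with $p_2\ge 2$. Any cycle built from your two crossing chords, the edge $v_{2s}^\prime v_{2s+1}^\prime$, and the spliced-in path $P_1$ has length $|R|+2+|P_1|=2r+2+2p_1$, whereas $|C|=2r+2p_2$; since $C$ is longest we only know $p_2\ge p_1$, so whenever $p_2\ge p_1+1$ your cycle is \emph{not} longer than $C$. Routing instead through the rest of $G_2$ --- the path $u_0Ru_{2l-1}v_{2s}^\prime v_{2s+1}^\prime u_{2l}Ru_{2r-1}$ followed by a closed $M$-alternating Hamilton path $P_2^\prime$ of $G_2-\{v_{2s}^\prime,v_{2s+1}^\prime\}$, whose ends attach to $u_{2r-1}$ and $u_0$ by Claim~\ref{claim:G_2 complete} --- gives length exactly $|C|$, never more. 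And no cycle can swallow both $G_1$ and all of $G_2$ in this configuration: $e(V(G_1),V(G_2))=0$, and an alternating excursion into $G_2$ through the two chords must immediately use the matching edge $v_{2s}^\prime v_{2s+1}^\prime$ and exit, covering only that pair. So there is simply no ``longer than $C$'' contradiction available here. What kills this case in the paper is not cycle length but the secondary extremal choice made just before Claim~\ref{claim:G_1 G_2 disjoint}: among all longest cycles, critical graphs and critical paths, $P_2$ is chosen \emph{shortest}. The rerouted cycle above has length exactly $|C|$, hence is again a longest cycle with the same critical graph $G_1$ and critical path $P_1$, but it admits the opposite path $P_2^\prime$ of length $|P_2|-2$, contradicting that choice. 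Your argument never invokes this minimality, and without it (or some substitute) the residual case remains a real, unclosed hole in the proof of part~(i) for $j=2$, on which (ii) and (iii) then depend.
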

\begin{proof}
Firstly, we prove that for $j=1,\ 2$ and every edge
$u_{2i-1}u_{2i}$, $1\leq i \leq r-1$, $u_{2i-1}\nrightarrow G_j$
or $u_{2i}\nrightarrow G_j$. By Lemma \ref{lemma:CycleToCycle}, the
conclusion holds for $G_1$. Now we prove it for $G_2$. Suppose to
the contrary that there exists $1\leq l \leq r-1$ such that
$u_{2l-1}\rightarrow G_2$ and $u_{2l} \rightarrow G_2$, and let
$v_{2s}^\prime \in N_{G_2}(u_{2l-1})$ and $v_{2t+1}^\prime \in
N_{G_2}(u_{2l})$. If $|G_2|=2$ or $t\neq s$, We can find a closed
$M$-alternating Hamilton path $Q$ of $G_2$ connecting
$v_{2s}^\prime$ and $v_{2t-1}^\prime$, and hence we have an
$M$-alternating Hamilton cycle
$$u_0Ru_{2l-1}v_{2s}^\prime Q v_{2t-1}^\prime
u_{2l}Ru_{2r-1}v_0P_1v_{2p_1-1}u_0$$ of $G$, contradicting our
assumption. If $|G_2|\geq 4$ and $t=s$, let $P_2^\prime$ be a
closed $M$-alternating Hamilton path of $G_2-\{v_{2s}^\prime,\
v_{2s+1}^\prime\}$. Then $P_2^\prime$ is an opposite path for
$P_1$, with the central path $u_0Ru_{2l-1}v_{2s}^\prime
v_{2s+1}^\prime u_{2l}Ru_{2r-1}$, which is shorter than $P_2$,
contradicting our choice of $P_2$. Hence $u_{2i-1}\nrightarrow
G_2$ or $u_{2i}\nrightarrow G_2$, for  $1\leq i \leq r-1$.

Arbitrarily choose $0\leq l \leq p_1-1$ and $0\leq k \leq p_2-1$.
We have $d(v_{2l})+d(v_{2l+1})\leq 2p_1+2+(|R|-2)/2=2p_1+r+1$ and
similarly $d(v_{2k}^\prime)+d(v_{2k+1}^\prime)\leq 2p_2+r+1$. So
\begin{eqnarray} \label{eqn:k+l_leq}d(v_{2l})+d(v_{2l+1})+d(v_{2k}^\prime)+d(v_{2k+1}^\prime)\leq
2p_1+2p_2+2r+2=\nu+2.\end{eqnarray} However
$v_{2l}v^\prime_{2k+1}-$ and $v_{2l+1}v^\prime_{2k}-$, by the
condition of the theorem,
\begin{eqnarray} \label{eqn:k+l_geq}d(v_{2l})+d(v_{2k+1}^\prime)+d(v_{2l+1})+d(v_{2k}^\prime)
\geq 2(\nu/2+1)=\nu+2,\end{eqnarray} and all equalities must hold.
To obtain equalities, for $j=1,\ 2$, and every edge
$u_{2i-1}u_{2i}$, $1\leq i \leq r-1$, exactly one of $u_{2i-1}
\rightarrow G_j$ and $u_{2i}\rightarrow G_j$ must hold.
Furthermore, since $l$ and $k$ are arbitrarily chosen, we prove
that if
$u_{2i-1}\rightarrow G_j$ (respectively $u_{2i}\rightarrow G_j$), it
is adjacent to all vertices in $V(G_j)\cap W$ (respectively $V(G_j)\cap B$).
\end{proof}

Let $1\leq i \leq r-1$. We define $E_1$ ($E_1^\prime$) to be the
set of edges $u_{2i-1}u_{2i}$, where $u_{2i-1}v_{2j}+$, for every
$0\leq j\leq p_1-1$ ($u_{2i-1}v_{2k}^\prime+$, for every $0\leq k
\leq p_2-1$), and $E_2$ ($E_2^\prime$) to be the set of edges
$u_{2i-1}u_{2i}$, where $u_{2i}v_{2j+1}+$, for every $0\leq j\leq
p_1-1$ ($u_{2i}v_{2k+1}^\prime+$, for every $0\leq k\leq p_2-1$).

By Claim \ref{claim:toG1G2}, for every $1\leq i \leq r-1$,
$u_{2i-1}u_{2i}\in E_1\cap E_1^\prime$, $E_1\cap E_2^\prime$,
$E_2\cap E_1^\prime$ or $E_2\cap E_2^\prime$. Accordingly, we say
that $u_{2i-1}u_{2i}$ is an edge of type I, II, III or IV for
$G_1$, $G_2$ and $R$. Let the number of edges $u_{2i-1}u_{2i}$
belonging to $E_1\cap E_1^\prime$, $E_1\cap E_2^\prime$, $E_2\cap
E_1^\prime$ and $E_2\cap E_2^\prime$ be $t_{11}$, $t_{12}$,
$t_{21}$ and $t_{22}$, respectively. We have
$d(v_0)=t_{11}+t_{12}+p_1+1$, $d(v_1)=t_{22}+t_{21}+p_1+1$,
$d(v_0^\prime)=t_{11}+t_{21}+p_2+1$ and
$d(v_1^\prime)=t_{22}+t_{12}+p_2+1$.

Since equalities hold in (\ref{eqn:k+l_leq}) and
(\ref{eqn:k+l_geq}), we have
$d(v_{2l})+d(v_{2k+1}^\prime)=d(v_{2l+1})+d(v_{2k}^\prime)=\nu/2+1$
for any $0\leq l\leq p_1-1$ and $0\leq k \leq p_2-1$, Hence
\begin{eqnarray} \label{eqn:t12=t21}
t_{11}+t_{22}+2t_{12}+p_1+p_2+2 &=& d(v_0)+d(v_1^\prime) \nonumber
\\ &=& \nu/2+1 \nonumber \\ &=& d(v_1)+d(v_0^\prime) \nonumber \\ &=&
t_{11}+t_{22}+2t_{21}+p_1+p_2+2.
\end{eqnarray} So $t_{12}=t_{21}$.

We let $t_1=t_{11}$, $t_2=t_{22}$ and $t_{0}=t_{12}=t_{21}$, then
$t_1+t_2+2t_0=r-1$. $\newline$

We summarise some structural results in the form of observations.

\begin{observation} \label{observation:E2E1}
If there exists $1\leq j<i\leq r-1$, such that $u_{2i-1}u_{2i} \in
E_1$ ($E_1^\prime$) and $u_{2j-1}u_{2j}\in E_2^\prime$ ($E_2$).
Then $u_{2j-1}u_{2i}-$.
\end{observation}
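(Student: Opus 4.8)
The plan is to argue by contradiction: assuming $u_{2j-1}u_{2i}+$, I will splice the pieces of $R$, $G_1$ and $G_2$ together into a single $M$-alternating cycle through every vertex of $G$, contradicting the standing hypothesis that $G$ has no $M$-alternating Hamilton cycle. The reason such a chord is dangerous is precisely that it lets me bypass the segment $S=u_{2j}C^{+}u_{2i-1}$ of $R$, while the defining full-adjacency properties of $E_1$ and $E_2^\prime$ supply clean entry and exit edges from $S$ into $G_1$ and $G_2$.

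First I record the ingredients. Let $S=u_{2j}C^{+}u_{2i-1}$, a closed $M$-alternating path with $W$-end $u_{2j}$ and $B$-end $u_{2i-1}$, and let $R^\prime$ be the path $u_0C^{+}u_{2j-1}$ followed by the chord $u_{2j-1}u_{2i}$ followed by $u_{2i}C^{+}u_{2r-1}$. This chord is non-matching, since the $M$-partner of $u_{2j-1}$ is $u_{2j-2}$; hence $R^\prime$ is a genuine closed $M$-alternating path with $W$-end $u_0$ and $B$-end $u_{2r-1}$, covering $V(R)\setminus V(S)$. Because $u_{2i-1}u_{2i}\in E_1$, the vertex $u_{2i-1}$ is joined to every vertex of $V(G_1)\cap W$; because $u_{2j-1}u_{2j}\in E_2^\prime$, the vertex $u_{2j}$ is joined to every vertex of $V(G_2)\cap B$. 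From Claim \ref{claim:G_1 G_2 disjoint} and Claim \ref{claim:G_2 complete}, $u_0$ is joined to every vertex of $V(G_1)\cap B$ and of $V(G_2)\cap B$, and $u_{2r-1}$ to every vertex of $V(G_1)\cap W$ and of $V(G_2)\cap W$; moreover $G_1$ and $G_2$ are complete bipartite by Claim \ref{claim:G_1 complete} and Claim \ref{claim:G_2 complete}.

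Then I assemble the Hamilton cycle from the four closed $M$-alternating blocks $R^\prime$, $S$, $G_1$, $G_2$, arranged cyclically as $R^\prime-G_1-S-G_2-R^\prime$ and joined by the non-matching splice edges: $u_0$ to a vertex of $V(G_1)\cap B$, $u_{2i-1}$ to a vertex of $V(G_1)\cap W$, $u_{2j}$ to a vertex of $V(G_2)\cap B$, and $u_{2r-1}$ to a vertex of $V(G_2)\cap W$. Inside $G_1$ (respectively $G_2$) I route an $M$-alternating Hamilton path between the chosen $W$- and $B$-endpoints; since $G_1,G_2$ are balanced complete bipartite, such a path exists whenever the two chosen endpoints are not an $M$-pair, which I can always arrange using the full adjacency of $u_0,u_{2r-1},u_{2i-1},u_{2j}$ to the relevant sides (and when a block has order $2$ it is a single matching edge, needing no choice). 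Tracing the blocks together with the splices shows the $M/\overline{M}$ pattern is preserved everywhere, so the result is an $M$-alternating cycle covering $V(R^\prime)\cup V(S)\cup V(G_1)\cup V(G_2)=V(G)$, the desired contradiction.

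The main thing to get right is the bookkeeping of parities: keeping track of which end of each block lies in $W$ and which in $B$, so that every splice edge is indeed a non-matching edge joining a $W$-end to a $B$-end and the alternation never breaks, and confirming that the prescribed endpoints of the $G_1$- and $G_2$-paths can be taken to avoid matched pairs. The parenthetical statement, with $u_{2i-1}u_{2i}\in E_1^\prime$ and $u_{2j-1}u_{2j}\in E_2$, is settled by the identical construction with the roles of $G_1$ and $G_2$ interchanged.
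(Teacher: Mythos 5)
Your proposal is correct and is essentially the paper's own argument: the cycle you assemble as $R^\prime$--$G_1$--$S$--$G_2$--$R^\prime$ with the four splice edges is exactly the paper's explicit Hamilton cycle $u_0Ru_{2j-1}u_{2i}Ru_{2r-1}v_0^\prime P_2v_{2p_2-1}^\prime u_{2j}Ru_{2i-1}v_0P_1v_{2p_1-1}u_0$ (traversed in the opposite direction), and you handle the parenthetical case by the same $G_1$/$G_2$ swap. The only difference is presentational: you phrase it as splicing closed $M$-alternating blocks, while the paper simply writes down the cycle using the existing paths $P_1$ and $P_2$, so no re-routing inside $G_1$ or $G_2$ is ever needed.
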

\begin{proof}
If $u_{2j-1}u_{2i}+$, we obtain an $M$-alternating Hamilton cycle
$$u_0Ru_{2j-1}u_{2i}Ru_{2r-1}v_0^\prime P_2v_{2p_2-1}^\prime u_{2j}Ru_{2i-1}v_0P_1v_{2p_1-1}u_0$$
$$(u_0Ru_{2j-1}u_{2i}Ru_{2r-1}v_0P_1v_{2p_1-1}u_{2j}Ru_{2i-1}v_0^\prime
P_2v_{2p_2-1}^\prime u_0),$$ contradicting our assumption.
\end{proof}

\begin{observation}\label{observation:CriticalEdge}
If there exists $1\leq i \leq r-2$, such that $u_{2i-1}u_{2i} \in
E_1$ and $u_{2i+1}u_{2i+2}\in E_2$, then $u_{2i}u_{2i+1}$ is a
critical edge, $|G_1|=|G_2|=2$, and exactly one of
$u_{2i}v_1^\prime+$ and $u_{2i+1}u_0+$ ($u_{2i+1}v_0^\prime+$ and
$u_{2i}u_{2r-1}+$) holds.

If there exists $1\leq i \leq r-2$, such that $u_{2i-1}u_{2i} \in
E_1^\prime$ and $u_{2i+1}u_{2i+2}\in E_2^\prime$, then
$u_{2i}u_{2i+1}$ is a critical edge, $|G_1|=2$, and exactly one of
$u_{2i}v_1+$ and $u_{2i+1}u_0+$ ($u_{2i+1}v_0+$ and
$u_{2i}u_{2r-1}+$) holds.
\end{observation}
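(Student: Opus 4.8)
The plan is, in either part, to reroute $C$ into a new longest $M$-alternating cycle whose removal leaves exactly $\{u_{2i},u_{2i+1}\}$, so that $u_{2i}u_{2i+1}$ becomes a critical edge, and then to read off the stated adjacencies from the equality $e(\{x,y\},\{\text{endpoints of the critical path}\})=1$ that we already established for every non-matching edge $xy$ on a central path. For the first part, $u_{2i-1}u_{2i}\in E_1$ makes $u_{2i-1}$ adjacent to every vertex of $V(G_1)\cap W$, and $u_{2i+1}u_{2i+2}\in E_2$ makes $u_{2i+2}$ adjacent to every vertex of $V(G_1)\cap B$. Since $G_1$ is complete bipartite (Claim \ref{claim:G_1 complete}) it has a closed $M$-alternating Hamilton path $v_a\cdots v_b$ with $v_a\in V(G_1)\cap W$ and $v_b\in V(G_1)\cap B$, and I would splice it into $C$ in place of the matching edge $u_{2i}u_{2i+1}$, forming
\[
C'=u_{2i+2}C^+u_{2i-1}\,v_a\cdots v_b\,u_{2i+2},
\]
an $M$-alternating cycle of length $|C|-2+|G_1|$. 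As $C$ is longest this gives $|G_1|\le 2$, hence $|G_1|=2$, $G_1=\{v_0,v_1\}$, and $C'$ is again longest with $G-C'=\{u_{2i},u_{2i+1}\}$; so $u_{2i}u_{2i+1}$ is a critical edge.

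To force $|G_2|=2$ I would analyse the critical edge $u_{2i}u_{2i+1}$ on the longest cycle $C'$. Using the neighbours $u_{2i-1}\in N_{C'}(u_{2i})$ and $u_{2i+2}\in N_{C'}(u_{2i+1})$, the opposite path they determine is exactly $v_0v_1$, of length $2$. Since $C,G_1,P_1$ were chosen so that the opposite path $P_2$ is the shortest over all such configurations, $|G_2|=|P_2|\le 2$, whence $p_2=1$ and $G_2=\{v_0',v_1'\}$. The central path of this new configuration is $R^*=u_{2i+2}C^+u_{2i-1}$, and its two non-matching edges at the joins with $G_2$ are $u_{2r-1}v_0'$ and $v_1'u_0$. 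Applying to each the equality $e(\{x,y\},\{u_{2i},u_{2i+1}\})=1$, the edge $v_1'u_0$ yields that exactly one of $u_{2i}v_1'+$ and $u_{2i+1}u_0+$ holds, and $u_{2r-1}v_0'$ yields that exactly one of $u_{2i}u_{2r-1}+$ and $u_{2i+1}v_0'+$ holds, which is the desired conclusion.

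For the second part $u_{2i-1}$ is adjacent to every vertex of $V(G_2)\cap W$ and $u_{2i+2}$ to every vertex of $V(G_2)\cap B$, but $G_2$ lies on $C$, so a single splice is unavailable. Instead I would combine these with the endpoint adjacencies of Claim \ref{claim:G_1 G_2 disjoint} ($u_{2r-1}$ to all of $V(G_1)\cap W$ and $u_0$ to all of $V(G_1)\cap B$) and reroute through both complete bipartite graphs: take a Hamilton path of $G_2$ between $u_{2i-1}$ and $u_{2i+2}$ and a Hamilton path $v_a\cdots v_b$ of $G_1$ between $u_{2r-1}$ and $u_0$, giving
\[
C''=u_0C^+u_{2i-1}\,[G_2\text{-path}]\,u_{2i+2}C^+u_{2r-1}\,v_a\cdots v_b\,u_0 .
\]
This omits $u_{2i},u_{2i+1}$ and inserts all of $G_1$, so $|C''|=|C|-2+|G_1|$ and again $|G_1|=2$, $G_1=\{v_0,v_1\}$, with $u_{2i}u_{2i+1}$ a critical edge of the longest cycle $C''$. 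The central path of this configuration contains the segment $u_{2r-1}\,v_0\,v_1\,u_0$, whose non-matching edges $u_{2r-1}v_0$ and $v_1u_0$, via the same equality, give that exactly one of $u_{2i}v_1+$ and $u_{2i+1}u_0+$ holds and exactly one of $u_{2i}u_{2r-1}+$ and $u_{2i+1}v_0+$ holds.

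The delicate points I expect are two. First, verifying the $M$-alternation through each spliced Hamilton path: each is a closed $M$-alternating path entered and left by a non-matching edge, and one must choose its two endpoints to be non-matched in $G_i$ when $|G_i|\ge 4$ so that such a Hamilton path actually exists. Second, and more essential, the minimality step in the first part: one must confirm that $(C',u_{2i}u_{2i+1})$ is an admissible configuration for the very minimisation that defined $P_2$, so that its length-$2$ opposite path legitimately forces $|G_2|=2$.
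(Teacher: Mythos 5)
Your proposal is correct and follows essentially the same route as the paper: in each part it constructs the same detour cycle (through $G_1$, respectively through $G_2$ and $G_1$) missing only $u_{2i},u_{2i+1}$, deduces $|G_1|=2$ from the maximality of $C$, gets $|G_2|=2$ in the first part from the global minimality of $P_2$ via the length-$2$ opposite path $v_0v_1$, and reads the adjacency dichotomies off the established equality $e(\{x,y\},\{u_{2i},u_{2i+1}\})=1$ for non-matching edges on the central path. The two points you flag as delicate (admissibility of the new configuration in the minimisation defining $P_2$, and $M$-alternation of the spliced paths) are exactly what the paper relies on implicitly, handled in the same way.
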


\begin{proof}
Suppose there exists $1\leq i\leq r-2$, such that
$u_{2i-1}u_{2i}\in E_1$ and $u_{2i+1}u_{2i+2}\in E_2$, then
$u_{2i}u_{2i+1}$ is a critical edge with respect to the
$M$-alternating cycle
$$u_0Ru_{2i-1}v_0P_1v_{2p_1-1}u_{2i+2}Ru_{2r-1}v_0^\prime
P_2v_{2p_2-1}^\prime u_0,$$
where $P_1$ is an opposite path. Since
$G_1$ is critical, $|G_1|=2$. Since $|P_1|=2$, and $P_2$ is the
shortest opposite path, $|G_2|=2$. Since $u_0v_1^\prime$
($u_{2r-1}v_0^\prime$) are on a central path for the critical edge
$u_{2i}u_{2i+1}$ and the opposite path $v_0v_1$, exactly one of
$u_{2i+1}u_0+$ and $u_{2i}v_{1}^\prime+$ ($u_{2i+1}v_0^\prime+$
and $u_{2i}u_{2r-1}+$) holds.

{
Now suppose there exists $1\leq i\leq r-2$, such that
$u_{2i-1}u_{2i}\in E_1^\prime$ and $u_{2i+1}u_{2i+2}\in E_2^\prime$. Then
$u_{2i}u_{2i+1}$ is a critical edge with respect to the
$M$-alternating cycle
$$u_0Ru_{2i-1} v_0^\prime
P_2v_{2p_2-1}^\prime u_{2i+2}Ru_{2r-1} v_0P_1v_{2p_1-1} u_0,$$
where $P_2$ is an opposite path. Since $G_1$ is critical, $|G_1|=2$. Since $u_0v_1$
($u_{2r-1}v_0$) are on a central path for the critical edge
$u_{2i}u_{2i+1}$ and the opposite path $P_2$, exactly one of
$u_{2i+1}u_0+$ and $u_{2i}v_{1}+$ ($u_{2i+1}v_0+$
and $u_{2i}u_{2r-1}+$) holds.
}
\end{proof}

\begin{observation}\label{observation:E1E2}
If there exists $1\leq i<k<j\leq r-1$, such that
$u_{2i-1}u_{2i}\in E_1$ ($E_1^\prime$), $u_{2j-1}u_{2j}\in E_2$
($E_2^\prime$), $u_{2k-1}u_{2k}\in E_2^\prime$ ($E_2$) and
$u_{2k-1}u_0+$, then $u_{2i}u_{2j-1}-$.
\end{observation}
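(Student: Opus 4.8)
Observation \ref{observation:E1E2} asserts: if there exist indices $1\leq i<k<j\leq r-1$ with $u_{2i-1}u_{2i}\in E_1$, $u_{2j-1}u_{2j}\in E_2$, $u_{2k-1}u_{2k}\in E_2^\prime$ and $u_{2k-1}u_0+$, then $u_{2i}u_{2j-1}-$.

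Let me understand the setup. We have a longest $M$-alternating cycle $C$, decomposed (in Case 2) into the central path $R=u_0u_1\ldots u_{2r-1}$ and the opposite path $P_2$ (which spans $G_2$). The critical graph $G_1$ has Hamilton path $P_1=v_0v_1\ldots v_{2p_1-1}$, and $G_2$ has Hamilton path/cycle on $v_0^\prime\ldots v_{2p_2-1}^\prime$. Membership of an edge $u_{2i-1}u_{2i}$ in $E_1$ means $u_{2i-1}$ is adjacent to every $v_{2j}\in V(G_1)\cap W$ (so I can route through $G_1$ entering at $u_{2i-1}$), and $E_2$ means $u_{2i}$ is adjacent to every $v_{2j+1}\in V(G_1)\cap B$ (so I can leave $G_1$ at $u_{2i}$). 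Similarly $E_1^\prime, E_2^\prime$ concern attachment to $G_2$.

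The plan is the standard "assume the forbidden adjacency, then build an $M$-alternating Hamilton cycle of $G$ longer than $C$ — in fact spanning all of $V(C)\cup V(G_1)\cup V(G_2)$ — contradicting that $C$ is longest." So I would suppose $u_{2i}u_{2j-1}+$ and exhibit an explicit $M$-alternating Hamilton cycle. The closed $M$-alternating path $P_1$ through $G_1$ can be attached because $u_{2i-1}\in E_1$ (reaching $v_0\in W$) and $u_{2j}\in E_2$ (reaching $v_{2p_1-1}\in B$); the path $P_2$ through $G_2$ can be attached using $u_{2k}\in E_2^\prime$ on one side and the hypothesis $u_{2k-1}u_0+$ on the other, which lets me splice in $u_0$ and reach the start of $R$. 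The indices $i<k<j$ guarantee the three attachment points occur in the correct cyclic order along $R$, so the segments $u_?C^+u_?$ partition $R$ without overlap.

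Concretely, I would assemble a cycle of the shape
$$u_0C^+u_{2i-1}\,v_0P_1v_{2p_1-1}\,u_{2j}C^+u_{2r-1}v_0^\prime P_2 v_{2p_2-1}^\prime\,u_{2k}C^+u_{2j-1}\,u_{2i}C^+u_{2k-1}\,u_0,$$
checking that every vertex of $R$, $G_1$ and $G_2$ appears exactly once and that the edges alternate correctly with $M$ at each junction. The hard part is purely bookkeeping: verifying the traversal directions of $R$ (forward versus backward) so that the spliced-in $M$-edges of $P_1$ and $P_2$ join at matching (as opposed to non-matching) points, and confirming that the assumed new edge $u_{2i}u_{2j-1}$ together with $u_{2k-1}u_0$ supplies exactly the two non-matching links needed to close the tour. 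Once the cycle is displayed, its existence contradicts the maximality of $C$, forcing $u_{2i}u_{2j-1}-$, which proves the observation; the parenthetical $E_1^\prime/E_2^\prime/E_2$ variant follows by the symmetry exchanging the roles of $G_1$ and $G_2$.
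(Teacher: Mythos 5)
Your proposal is correct and matches the paper's proof essentially verbatim: the paper assumes $u_{2i}u_{2j-1}+$ and exhibits exactly the cycle you display, $u_0Ru_{2i-1}v_0P_1v_{2p_1-1}u_{2j}Ru_{2r-1}v_0^\prime P_2v_{2p_2-1}^\prime u_{2k}Ru_{2j-1}u_{2i}Ru_{2k-1}u_0$, using the same six junction edges, and likewise disposes of the parenthetical variant by symmetry. The contradiction you invoke (maximality of $C$, equivalently the standing assumption that $G$ has no $M$-alternating Hamilton cycle) is the same one the paper uses.
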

\begin{proof}
If $u_{2i}u_{2j-1}+$, we obtain an $M$-alternating Hamilton cycle
$$u_0Ru_{2i-1}v_0P_1v_{2p_1-1}u_{2j}Ru_{2r-1}v_0^\prime P_2v_{2p_2-1}^\prime
u_{2k}Ru_{2j-1}u_{2i}Ru_{2k-1}u_0,$$ contradicting our assumption.

By symmetry, the claim holds under the other situation.
\end{proof}

\begin{claim}
$|G_1|=2$.
\end{claim}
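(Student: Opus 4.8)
The plan is to argue by contradiction: assume $|G_1|\geq 4$, i.e. $p_1\geq 2$, and derive a violation of the degree condition. The first step is to pin down the sequence of edge types along $R$. By Observation~\ref{observation:CriticalEdge}, if some consecutive pair $u_{2i-1}u_{2i},\,u_{2i+1}u_{2i+2}$ has its first edge in $E_1$ and its second in $E_2$ (or its first in $E_1'$ and its second in $E_2'$), then $|G_1|=2$, contrary to assumption. Hence along $R$ no $E_1$-edge is immediately followed by an $E_2$-edge, and likewise for $E_1',E_2'$; equivalently the ``$G_1$-labels'' form a block $E_2\cdots E_2\,E_1\cdots E_1$ and the ``$G_2$-labels'' a block $E_2'\cdots E_2'\,E_1'\cdots E_1'$. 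Combining this block structure with $t_{12}=t_{21}$ from (\ref{eqn:t12=t21}) forces the two blocks to break at the same position, so $t_0=0$ and every edge of $R$ is of type IV (an initial segment of $a$ edges) or of type I (the remaining $r-1-a$ edges).

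Next I would extract the non-adjacencies. For a type IV edge $u_{2j-1}u_{2j}$ (so $j\le a$) and a type I edge $u_{2i-1}u_{2i}$ (so $i>a\ge j$), Observation~\ref{observation:E2E1} applies, since the later edge lies in $E_1$ and the earlier in $E_2'$, and it yields $u_{2j-1}u_{2i}-$. Thus every type IV $B$-endpoint is non-adjacent to every type I $W$-endpoint. Moreover, by Claim~\ref{claim:toG1G2} a type IV $B$-endpoint sends no edge to $G_1$ or $G_2$, and a type I $W$-endpoint likewise, so all their neighbours lie on $R$. Counting then gives that a type IV vertex $u_{2j-1}$ can be adjacent only to the $a+1$ vertices $u_0,u_2,\ldots,u_{2a}$ among all of $W$, whence $d(u_{2j-1})\le a+1$; symmetrically $d(u_{2i})\le r-a$ for a type I vertex $u_{2i}$.

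When both types occur (that is $1\le a\le r-2$), I pick such a non-adjacent pair; then
\[
 d(u_{2j-1})+d(u_{2i})\le (a+1)+(r-a)=r+1<r+p_1+p_2+1=\nu/2+1,
\]
contradicting the degree condition. It remains to treat the two boundary cases. If $a=0$ (all edges type I), take a type I $W$-vertex $u_{2i}$ and any $y\in V(G_1)\cap B$; these are non-adjacent, $d(u_{2i})\le r$, and by Claim~\ref{claim:G_1 G_2 disjoint} every vertex of $V(G_1)\cap B$ has degree $d(v_1)=p_1+1$, so $d(u_{2i})+d(y)\le r+p_1+1<\nu/2+1$, a contradiction. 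The case $a=r-1$ (all edges type IV) is symmetric, pairing a type IV $B$-vertex with a vertex of $V(G_1)\cap W$, whose common degree is $d(v_0)=p_1+1$. In every case $p_1\ge 2$ is contradicted, so $|G_1|=2$.

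The step I expect to be the crux is the first one: correctly reducing the edge-type sequence to the pure ``type IV then type I'' pattern, since this is exactly what lets Observation~\ref{observation:E2E1} produce a \emph{complete} set of non-adjacencies between the low-degree vertices. Once that rigid structure is in place the contradiction is a short degree count requiring no further cycle surgery; the only additional care is the separate bookkeeping for the degenerate cases $a\in\{0,r-1\}$, where only one kind of low-degree vertex is present and one must pair it against $G_1$ rather than against the opposite end of $R$.
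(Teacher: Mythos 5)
Your proposal is correct and follows essentially the same route as the paper's own proof: Observation~\ref{observation:CriticalEdge} gives the monotone block structure of the labels along $R$, the identity $t_{12}=t_{21}$ forces the two blocks to break at the same place (so $t_0=0$), Observation~\ref{observation:E2E1} plus Claim~\ref{claim:toG1G2} then yield the non-adjacencies and degree bounds $d(u_{2j-1})\le a+1$, $d(u_{2i})\le r-a$ giving the contradiction when both types occur, and the degenerate all-one-type cases are killed by pairing a degree-$\le r$ vertex of $R$ with a degree-$(p_1+1)$ vertex of $G_1$. The only differences are cosmetic (you make the ``easily seen'' step $t_0=0$ explicit, pick an arbitrary non-adjacent pair rather than the pair at the break, and write out both boundary cases instead of invoking symmetry), so there is nothing to correct.
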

\begin{proof}
Suppose $|G_1|\geq 4$. By Observation
\ref{observation:CriticalEdge}, there does not exist $1\leq i\leq
r-1$, such that $u_{2i-1}u_{2i}\in E_1$ ($E_1^\prime$) and
$u_{2i+1}u_{2i+2}\in E_2$ ($E_2^\prime$). Therefore, there can not
exist $i<j$, such that $u_{2i-1}u_{2i}\in E_1$ ($E_1^\prime$) and
$u_{2j-1}u_{2j}\in E_2$ $(E_2^\prime$). In other words, there
exits an integer $0\leq k_1 \leq r-1$ ($0\leq k_2\leq r-1$), such
that for all $i\leq k_1$ ($j\leq k_2$), $u_{2i-1}u_{2i}\in E_2$
($u_{2j-1}u_{2j}\in E_2^\prime$) and for all $i> k_1$ ($j>k_2$),
$u_{2i-1}u_{2i}\in E_1$ ($u_{2j-1}u_{2j}\in E_1^\prime$). It is
easily seen that $t_0=0$ and $k_1=k_2$. We let $k=k_1=k_2$.

Suppose that $t_1,\ t_2 \neq 0$, or equally, $1\leq k\leq r-2$.
Consider the vertices $u_{2k-1}$ and $u_{2k+2}$.
By Observation \ref{observation:E2E1}, for all $j\geq k+1$,
$u_{2k-1}u_{2j}-$, and for all $j\leq k$, $u_{2k+2}u_{2j-1}-$.
Particularly, $u_{2k-1}u_{2k+2}-$. But then we have
$d(u_{2k-1})\leq k+1$, $d(u_{2k+2})\leq r-k$ and
$d(u_{2k-1})+d(u_{2k+2})\leq r+1 < \nu/2+1$, contradicting our
condition.

Suppose one of $t_1$ and $t_2$, say $t_1=0$. Then for $1\leq i
\leq r-1$, $d(u_{2i-1})\leq r$. Moreover $d(v_0)=p_1+1$, so
$d(u_{2i-1})+d(v_0)\leq r+p_1+1 <\nu/2+1$ but $v_0u_{2i-1}-$, a
contradiction.

So we must have $|G_1|=2$.
\end{proof}

\begin{claim} \label{claim:t0=0_or t1=t2=0}
Either $t_0=0$, or $t_1 =t_2= 0$.
\end{claim}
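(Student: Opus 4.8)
The plan is to argue by contradiction: suppose the claim fails, so $t_0\geq 1$ and, not both $t_1,t_2$ are zero. By the symmetry exchanging the two colour classes $W$ and $B$ (which preserves the hypothesis, maps a longest $M$-alternating cycle to a longest one, a critical graph to a critical graph, and interchanges the roles ``odd endpoint'' and ``even endpoint'', hence $E_1\leftrightarrow E_2$, $E_1'\leftrightarrow E_2'$, so that $t_1\leftrightarrow t_2$ while $t_0=t_{12}=t_{21}$ is fixed), I may assume $t_1\geq 1$. Since $t_{12}=t_{21}=t_0\geq 1$, there is both a type~II edge and a type~III edge on $R$, and since $t_1\geq 1$ there is a type~I edge. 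The tools I would feed these into are: the already-established fact $|G_1|=2$ (so $G_1=\{v_0,v_1\}$ with $v_0v_1\in M$), the completeness of $G_2$ together with the prescribed attachments of the ends $u_0,u_{2r-1}$ of $R$ to $G_1$ and $G_2$ (Claims~\ref{claim:G_1 G_2 disjoint} and \ref{claim:G_2 complete}), and the degree expressions $d(v_0)=t_1+t_0+2$, $d(v_1)=t_2+t_0+2$, and their analogues for $v_0',v_1'$.

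First I would fix the coarse positions of the three edges along $R=u_0u_1\ldots u_{2r-1}$. If $|G_2|\geq 4$, then by the first part of Observation~\ref{observation:CriticalEdge} no central edge of $E_1$ is immediately followed by one of $E_2$ (such a consecutive pair would force $|G_2|=2$); exactly as in the proof that $|G_1|=2$, this sorts the $G_1$-labels, so every $E_2$ edge (types III, IV) precedes every $E_1$ edge (types I, II). In particular the type~III edge lies at some position $c$ and the type~I and type~II edges at positions $a,b>c$. Note that the $G_2$-labels need \emph{not} be sorted here, since the second part of Observation~\ref{observation:CriticalEdge} only constrains adjacencies of a consecutive $E_1'E_2'$ pair without forbidding it; this is the essential difference from the $|G_1|\geq 4$ argument. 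The borderline possibility $|G_2|=2$ (both $G_1,G_2$ single matching edges) I would isolate as a separate, more symmetric subcase.

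In the sorted subcase I would convert the coexistence of these types into a forbidden object. Using that the type~I edge gives $u_{2a-1}v_0+$ and $u_{2a-1}\to V(G_2)\cap W$, the type~III edge gives $u_{2c}v_1+$ and $u_{2c-1}\to V(G_2)\cap W$, and the type~II edge gives $u_{2b}\to V(G_2)\cap B$ and $u_{2b-1}v_0+$, I would splice the matching edge $v_0v_1$ and the whole of $G_2$ into $C$ by breaking the non-matching central edges at (suitably chosen among) $a,b,c$ and reconnecting through $v_0,v_1$ and through a closed $M$-alternating Hamilton path of $G_2$, following the explicit rerouting templates of Observations~\ref{observation:E2E1}--\ref{observation:E1E2}. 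The resulting $M$-alternating Hamilton cycle would contradict the standing assumption that $G$ has none. Where a direct splice is blocked because the required chord (e.g.\ a $u_{2i}u_{2j-1}$) is absent, Observations~\ref{observation:E2E1} and \ref{observation:E1E2} instead \emph{force} that nonadjacency; accumulating such forced nonadjacencies at a carefully chosen pair of vertices of $R$ yields two nonadjacent vertices whose degrees, computed from the type counts $t_1,t_2,t_0$, sum to strictly less than $\nu/2+1$, contradicting the degree hypothesis.

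I expect the main obstacle to be the bookkeeping in the cycle construction: for each of the type~I, II, III edges one must track which endpoint attaches to $G_1$ and which to $G_2$ (the $E_1/E_2$ versus $E_1'/E_2'$ roles), and then verify that each spliced route is genuinely $M$-alternating and visits every vertex of $G$. The relative order of the three edges and of the two ends of $R$ breaks into several positional cases, and in the subcase $|G_2|=2$ the sortedness of the $G_1$-labels is unavailable, so there I would rely on the adjacency conclusions of Observation~\ref{observation:CriticalEdge} for consecutive $E_1E_2$ and $E_1'E_2'$ pairs together with a direct degree count, rather than on the sorted structure.
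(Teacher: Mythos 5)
Your preparatory steps are correct: the $W$--$B$ symmetry reduction to $t_1\geq 1$, the degree formulas $d(v_0)=t_1+t_0+2$ and $d(v_1)=t_2+t_0+2$, and the fact that $|G_2|\geq 4$ forbids an $E_1$ edge immediately followed by an $E_2$ edge (hence sorts the $E_1/E_2$ labels, while the $E_1'/E_2'$ labels need not be sorted). But the proof stops exactly where the claim's real content begins: everything after ``I would splice'' is intention, not argument. You never exhibit the nonadjacent pair that violates the degree condition, never carry out the count, and defer the whole subcase $|G_2|=2$ to an unspecified ``direct degree count.'' Two of the concrete choices you do commit to are also off. First, you look for the contradicting pair among vertices of $R$, but the pair that works is not of that form: the argument that succeeds takes the type I edge $u_{2i-1}u_{2i}$ with $i$ \emph{maximum} and pairs $u_{2i}$ with $v_1\in V(G_1)$ (nonadjacent by Claim \ref{claim:toG1G2}), so that $d(u_{2i})\geq \nu/2+1-d(v_1)=t_1+t_0+\nu/2-r$; it is this lower bound, played against an upper bound on $d(u_{2i})$ coming from forced nonadjacencies, that collapses. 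For two vertices on $R$ there is no comparably clean bound, since such vertices may have many neighbours in $G_2$ and on $R$. Second, the nonadjacencies you want to ``accumulate'' from Observation \ref{observation:E1E2} are conditional: that observation requires an intermediate edge $u_{2k-1}u_{2k}$ of the right type \emph{together with the chord} $u_{2k-1}u_0+$. That chord must be manufactured before the observation can be used: by maximality of $i$ the edge at position $i+1$ lies in $E_2\cup E_2'$, and Observation \ref{observation:CriticalEdge} applied at positions $i,\ i+1$, combined with $u_{2i}v_1-$ and $u_{2i}v_1^\prime-$ (the edge at $i$ is in $E_1\cap E_1'$), forces $u_{2i+1}u_0+$. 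Your sketch never establishes any such chord, so none of the nonadjacencies you plan to rely on are actually available.

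Moreover, the case distinction $|G_2|\geq 4$ versus $|G_2|=2$ that organizes your whole plan is unnecessary, and sortedness is never needed. With $u_{2i-1}u_{2i}$ the last type I edge, the degree bound above shows $u_{2i}$ has a neighbour $u_{2j-1}$ with $j>i+1$ and $u_{2j-1}u_{2j}\in E_2\cup E_2'$; one then splits only on the type of the edge at $i+1$. If it is in $E_2\cap E_2'$, Observation \ref{observation:E1E2} (now usable thanks to $u_{2i+1}u_0+$) contradicts that adjacency outright. If it is in $E_1\cap E_2'$ or $E_2\cap E_1'$, then Observation \ref{observation:CriticalEdge} gives $u_{2i}u_{2r-1}-$, Observation \ref{observation:E1E2} excludes all $u_{2j-1}$ with $u_{2j-1}u_{2j}\in E_2$, $j>i+1$, and the remaining $\geq t_0+\nu/2-r-1\geq t_0+1$ neighbours of $u_{2i}$ must sit on the at most $t_0$ edges of $E_1\cap E_2'$, a contradiction. (Note that in the first situation the observation being invoked itself forces $|G_2|=2$, so your ``sorted'' subcase and this uniform argument cannot be reconciled anyway.) Until an accounting of this kind is carried out, and $|G_2|=2$ is treated, your proposal is a frame around a missing core rather than a proof.
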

\begin{proof}
Suppose that $t_0>0$, and one of $t_1$ and $t_2$ is greater than
$0$. Without lost of generality, we may assume that $t_1\geq t_2$,
and so $t_1>0$.

Let $u_{2i-1}u_{2i}\in E_1\cap E_1^\prime$, $1\leq i\leq r-1$, be
such that $i$ is the maximum. Then by our condition,
$d(u_{2i})+d(v_1) \geq \nu/2+1$. Hence, $d(u_{2i})\geq
\nu/2+1-d(v_1) = \nu/2+1-(t_2+t_0+2) = t_1+t_0+\nu/2-r$. By
Observation \ref{observation:E2E1}, $u_{2i}$ can not be adjacent
to any $u_{2j-1}$, where $u_{2j-1}u_{2j}\in E_2\cup E_2^\prime$ and
$j<i$. Hence $u_{2i}$ sends at least
$t_1+t_0+\nu/2-r-(t_1+1)=t_0+\nu/2-r-1$ edges to
$\{u_{2r-1}\}\cup\{u_{2j-1}: u_{2j-1}u_{2j}\in E_2\cup
E_2^\prime,\  j>i+1\}$. Since $t_0>0$ and $\nu/2-r\geq 2$,
$u_{2i}\rightarrow \{u_{2j-1}: u_{2j-1}u_{2j}\in E_2\cup
E_2^\prime,\ j>i+1\}$, so there exists at least one
$u_{2j-1}u_{2j}$ such that $j>i+1$ and $u_{2j-1}u_{2j}\in E_2\cup
E_2^\prime$.

{By our choice of $u_{2i-1}u_{2i}$, $u_{2i+1}u_{2i+2}\in E_2\cup
E_2^\prime$. If $u_{2i+1}u_{2i+2} \in E_2$, then by Observation \ref{observation:CriticalEdge},
$u_{2i}u_{2i+1}$ is a critical edge, and exactly one of $u_{2i}v_{1}^\prime+$ and $u_{2i+1}u_0+$ holds. By
$u_{2i-1}u_{2i}\in E_1^\prime$ we have $u_{2i}v_1^\prime-$, therefore $u_{2i+1}u_0+$. If $u_{2i+1}u_{2i+2} \in E_2^\prime$, then again by Observation \ref{observation:CriticalEdge},
$u_{2i}u_{2i+1}$ is a critical edge, and exactly one of $u_{2i}v_{1}+$ and $u_{2i+1}u_0+$ holds. By
$u_{2i-1}u_{2i}\in E_1$ we have
$u_{2i}v_1-$, hence $u_{2i+1}u_0+$.}

Now we discuss different situations of $u_{2i+1}u_{2i+2}$.

If $u_{2i+1}u_{2i+2}\in E_2\cap E_2^\prime$, let $j>i+1$ be such
that $u_{2i}u_{2j-1}+$, $u_{2j-1}u_{2j}\in E_2\cup E_2^\prime$. By
Observation \ref{observation:E1E2}, $u_{2i}u_{2j-1}-$, a
contradiction.

If $u_{2i+1}u_{2i+2}\in E_1\cap E_2^\prime$ or $E_2\cap
E_1^\prime$, without lost of generality, we may assume that
$u_{2i+1}u_{2i+2}\in E_1\cap E_2^\prime$. Since $u_{2i}u_{2i+1}$
is a critical edge and $u_{2i+1}v_0+$, by Observation
\ref{observation:CriticalEdge}, we have $u_{2i}u_{2r-1}-$. For
$j>i+1$, where $u_{2j-1}u_{2j}\in E_2$, by Observation
\ref{observation:E1E2}, $u_{2i}u_{2j-1}-$. Therefore $u_{2i}$
sends at least $t_0+\nu/2-r-1\geq t_0+1$ edges to $\{ u_{2j-1}:
u_{2j-1}u_{2j}\in E_1\cap E_2^\prime, j>i+1\}$. However, the
number of such $u_{2j-1}$ is at most $t_0$, a contradiction.
\end{proof}

\noindent\emph{\textbf{Case 2.1}}. $t_0=0$.

Without lost of generality, we may assume that $t_1>0$, and let
$u_{2i-1}u_{2i}\in E_1\cap E_1^\prime$.

If there exists $u_{2j-1}u_{2j}$, $j<i$, such that
$u_{2j-1}u_{2j}\in E_2\cap E_2^\prime$, then $u_{2j-1}u_{2i}-$ by
Observation \ref{observation:E2E1}.

If there exists $u_{2j-1}u_{2j}$, $j>i+1$, such that
$u_{2j-1}u_{2j}\in E_2\cap E_2^\prime$, then there exists $i\leq k
\leq j-1$, such that $u_{2k-1}u_{2k}\in E_1\cap E_1^\prime$ and
$u_{2k+1}u_{2k+2}\in E_2\cap E_2^\prime$. By Observation
\ref{observation:CriticalEdge}, $u_{2k}u_{2k+1}$ is a critical
edge, and since $u_{2k+1}v_0-$ and $u_{2k}v_1-$, we have
$u_{2k}u_{2r-1}+$ and $u_{2k+1}u_0+$. By Observation
\ref{observation:E1E2}, $u_{2i}u_{2j-1}-$.

Hence, for all $u_{2j-1}u_{2j}\in E_2\cap E_2^\prime$, $j\neq
i+1$, $u_{2i}u_{2j-1}-$. So, $d(u_{2i})\leq t_1+2$. But then
\begin{eqnarray}\label{OrderG2}\nu/2+1\leq d(u_{2i})+d(v_1)\leq t_1+2+t_2+2 =
(\nu-2p_2-2-2)/2+4=\nu/2-p_2+2.
\end{eqnarray}
Since $p_2\geq 1$, all equalities must hold, hence $p_2=1$ and
$2r-1=\nu-5$. Furthermore, to get $d(u_{2i})= t_1+2$, we must have
the following.

(a) $u_{2i+1}u_{2i+2}\in E_2\cap E_2^\prime$, hence
$u_{2i-1}u_{2i}\neq u_{\nu-7}u_{\nu-6}$.

(b) $u_{2i}u_{2j-1}+$, for all $u_{2j-1}u_{2j}\in E_1\cap
E_1^\prime$.

(c) $u_{2i}u_{\nu-5}+$.

By (a), $t_2\geq 0$, and similarly, for any $u_{2i-1}u_{2i}\in
E_2\cap E_2^\prime$, we can prove the following.

(d) $u_{2i-3}u_{2i-2}\in E_1\cap E_1^\prime$, hence
$u_{2i-1}u_{2i} \neq u_{1}u_2$.

(e) $u_{2i-1}u_{2j}+$, for all $u_{2j-1}u_{2j}\in E_2\cap
E_2^\prime$.

(f) $u_{2i-1}u_{0}+$.

So, the edges $u_{2i-1}u_{2i}$, $1\leq i \leq \nu/2-3$, belong to
$E_1\cap E_1^\prime$ and $E_2\cap E_2^\prime$ alternatively.
Moreover, $u_1u_2\in E_1\cap E_1^\prime$ and
$u_{\nu-7}u_{\nu-6}\in E_2\cap E_2^\prime$. Hence we must have
$\nu=4n+2$, for some integer $n\geq 2$, $u_{4j+1}u_{4j+2}\in
E_1\cap E_1^\prime$ and $u_{4j+3}u_{4j+4}\in E_2\cap E_2^\prime$
for $0\leq j \leq n-2$. The vertex set $\{u_{4j+1},\ u_{4j+2}:
0\leq j \leq n-2\}\cup \{v_0,\ v_0^\prime,\ u_{4n-3}\}$, as well
as $\{u_{4j+3},\ u_{4j+4}: 0\leq j \leq n-2\}\cup \{v_1,\
v_1^\prime,\ u_{0}\}$, induce complete bipartite subgraphs,
respectively.

Let $B_1=\{u_{4j+1}: 0\leq j \leq n-1\}$, $W=\{u_{4j+2}: 0\leq j
\leq n-2\}\cup \{v_0,\ v_0\prime\}$, $B=\{u_{4j+3}: 0\leq j \leq
n-2\}\cup \{v_1, v_1^\prime\}$ and $W_1=\{u_{4j}: 0\leq j \leq
n-1\}$. By above discussion, there can be no more edge between $B$
and $W$. But we can add edges between $B_1$ and $W_1$ freely, to
obtain all graphs $G\in\mathcal{G}_2$, with $M\in \mathcal{M}_2$.
$\newline$

\noindent\emph{\textbf{Case 2.2}}. $t_1=t_2=0$.
{Since $t_1+t_2+2t_0=r-1$, we have $r=2t_0+1$ and
$r$ must be odd.}

If there exists $1\leq i \leq r-2$, such that $u_{2i-1}u_{2i}\in
E_1\cap E_2^\prime$ and $u_{2i+1}u_{2i+2}\in E_2\cap E_1^\prime$
($u_{2i-1}u_{2i}\in E_2\cap E_1^\prime$ and $u_{2i+1}u_{2i+2}\in
E_1\cap E_2^\prime$), we say that an A-change (B-change) occurs at
$u_{2i-1}$. If there exist $i$ and $j$, such that $2\leq i+1<
j\leq r-2$, and there is an A-change (B-change) occurs at
$u_{2i-1}$ and a B-change (A-change) occurs at $u_{2j-1}$, we say
that a change couple occurs at $(u_{2i-1},\ u_{2j-1})$.

\noindent\emph{\textbf{Case 2.2.1}}. $|G_2|\geq 4$.

There can not be any A-change, or by Observation
\ref{observation:CriticalEdge}, $|G_1|=|G_2|=2$. To avoid any
A-change, for $1\leq i \leq (r-1)/2$, $u_{2i-1}u_{2i}\in E_2\cap
E_1^\prime$ and for $(r+1)/2\leq i \leq r-1$, $u_{2i-1}u_{2i}\in
E_1\cap E_2^\prime$.

Suppose that $r=3$. It is not hard to see that $u_0u_3-$ and
$u_2u_5-$, while each of $u_0u_5$ and $u_1u_4$ can be exist or
not. Hence we obtain all the graph in class $\mathcal{G}_3$,
except those with $n=1$.

If $r \geq 5$, then $u_{r-1}u_r$ is a critical edge, with central
path $u_{r+1}Ru_{2r-1}v_0v_1u_0Ru_{r-2}$ and opposite graph $G_2$
(Figure \ref{figure:N11N}). Consider the edge $v_1u_0$ and
$u_1u_2$ on the central path. We have $v_1u_{r-1}+$,
$u_0\rightarrow G_2$, $u_1 \rightarrow G_2$, and by Claim
\ref{claim:t0=0_or t1=t2=0}, $u_2u_r+$. But then an A-change
occurs at $v_1$, a contradiction.


\begin{figure}[!htbp]
\centering
\includegraphics[width=0.55\linewidth]{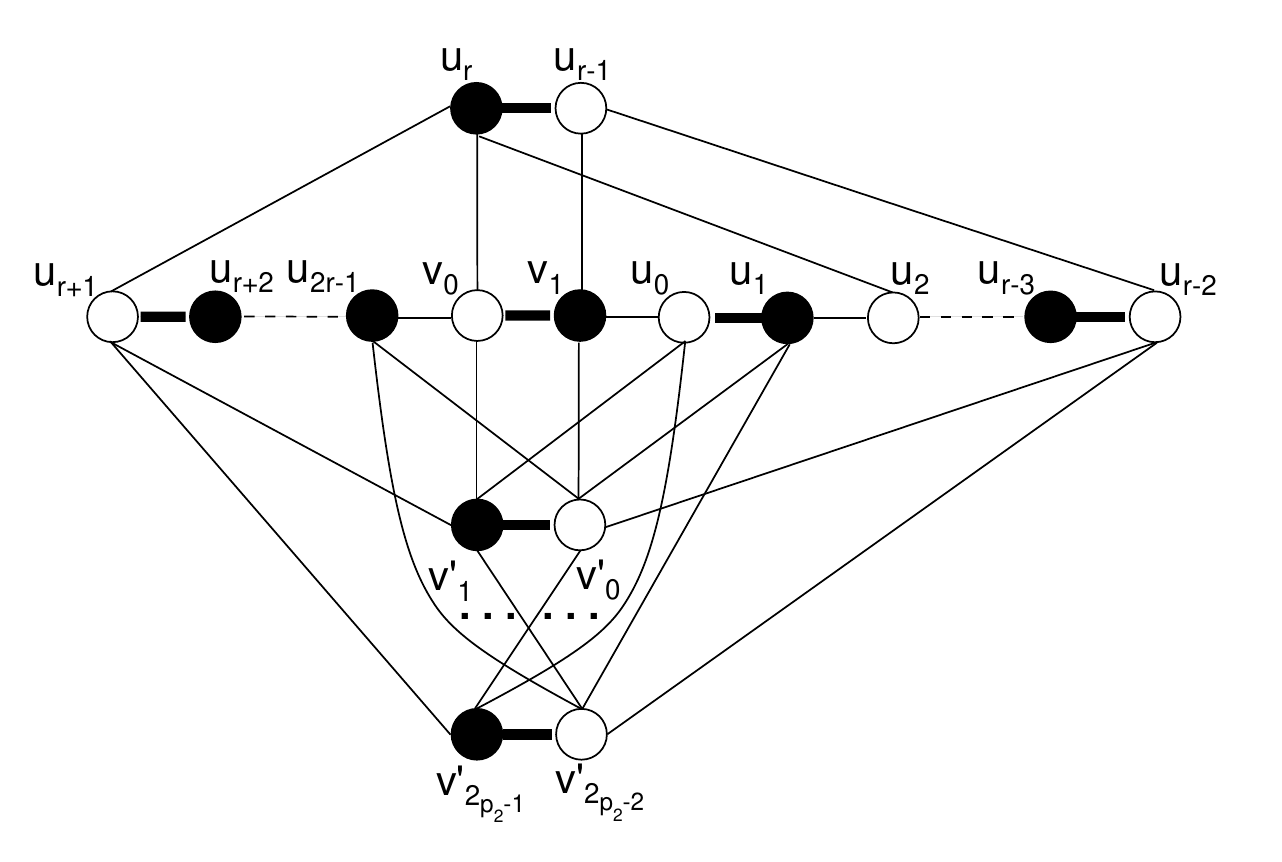}
\caption{Contradiction in Case 2.2.1} \label{figure:N11N}
\end{figure}


\noindent\emph{\textbf{Case 2.2.2}}. $|G_2|=2$.

Then $\nu=4n+6$, for some $n\geq 1$. For $n=1$, it is not hard to
verify that $G\in \mathcal{G}_3$, $M\in \mathcal{M}_3$, and we
obtain all graphs in $\mathcal{G}_3$ together with Case 2.2.1. For
$n=2$, it can be checked that $G=G_4$, $M=M_4$. Henceforth we
assume that $n\geq 3$, and then $r=2n+1 \geq 7$.

We call $G_1$ and $G_2$ a critical edge pair with central path
$R$. Since we have discussed all other cases, we may assume that
for every critical edge pair and the central path, every edge of
the central path that is not in $M$ is of type II or III.

Let there be a change couple occurs at $(u_{2i-1},\ u_{2j-1})$.
Without lost of generality, suppose that an A-change occurs at
$u_{2i-1}$ and a B-change occurs at $u_{2j-1}$, then
$u_{2i}u_{2i+1}$ and $u_{2j}u_{2j+1}$ are critical edges. Since
$u_{2i}u_{2i+1}$ and $v_1v_0$ is a critical edge pair, with the
central path $u_{2i+2}Ru_{2r-1}v_0^\prime v_1^\prime
u_0Ru_{2i-1}$, by our assumption, $u_{2j-1}u_{2j}$ and
$u_{2j+1}u_{2j+2}$ are of type II or III. By $u_{2j}v_1+$ and
$u_{2j+1}v_0+$, we have $u_{2j-1}u_{2i}+$ and $u_{2j+2}u_{2i+1}+$.
Similarly, we have $u_{2i-1}u_{2j}+$ and $u_{2i+2}u_{2j+1}+$.
However, we get an $M$-alternating Hamilton cycle
$$u_0Ru_{2i-1}u_{2j}u_{2j+1}u_{2i+2}Ru_{2j-1}u_{2i}u_{2i+1}v_0^\prime v_1^\prime
u_{2j+2}Ru_{2r-1}v_0v_1u_0$$ then, a contradiction. Therefore,
there must not be any change couple. $\newline$

By symmetry, we may assume that $u_1u_2\in E_1\cap E_2^\prime$,
and let $r_0>0$, $r_1>r_0$ and $r_2\geq r_1$ be such that
$u_1u_2$, $\ldots$, $u_{2r_0-1}u_{2r_0} \in E_1\cap E_2^\prime$,
$u_{2r_0+1}u_{2r_0+2}$, $\ldots$, $u_{2r_1-1}u_{2r_1}\in E_2\cap
E_1^\prime$, $u_{2r_1+1}u_{2r_1+2}$, $\ldots$,
$u_{2r_2-1}u_{2r_2}\in E_1\cap E_2^\prime$ and if
$u_{2r_2+1}u_{2r_2+2}$ exists, $u_{2r_2+1}u_{2r_2+2}\in E_2\cap
E_1^\prime$.

If $r_1-r_0\geq 2$ and $r_2-r_1\geq 1$, then a change couple
occurs at $(u_{2r_0-1},\ u_{2r_1-1})$, a contradiction. Hence,
$r_1-r_0=1$ or $r_2=r_1$.

If $r_1-r_0=1$, then $r_2> r_1$, and the edge
$u_{2r_2+1}u_{2r_2+2}$ exits. If $r_2-r_1\geq 2$, a change couple
occurs at $(u_{2r_1-1},\ u_{2r_2-1})$, a contradiction. Therefore
$r_2=r_1+1$. Moreover, if any B-change occurs at $u_{2j-1}$ where
$j\geq r_2+1$, we obtain a change couple $(u_{2r_0-1},\
u_{2j-1})$, again leading to a contradiction. Hence, we must have
$u_{2r_2+1}u_{2r_2+2},\ \ldots,\ u_{2r-3}u_{2r-2}\in E_2\cap
E_1^\prime$, and then $r_0=(r-3)/2$, $r_1=(r-1)/2$ and
$r_2=(r+1)/2$.



Then $u_{r+1}u_{r+2}$ and $v_1v_0$ is a critical edge pair, with
the central path $u_{r+3}Ru_{2r-1}v_0^\prime v_1^\prime
u_0Ru_{r}$. Again we may assume that the edge of the central path
not in $M$ are of type II or III. Consider the edges
$u_{r-4}u_{r-3}$ and $u_{r-2}u_{r-1}$, Since $u_{r-4}v_0+$ and
$u_{r-1}v_1+$, we must have $u_{r-3}u_{r+2}+$ and
$u_{r-2}u_{r+1}+$. Since $r\geq 7$, $2r-3> r+3$. Consider the
edges $u_{2r-3}u_{2r-2}$. Since $v_1u_{2r-2}+$, we must have
$u_{2r-3}u_{r+1}+$. But then we find a change couple occur at
$(u_{2r-3},\ u_{r-4})$, a contradiction (Figure
\ref{figure:N11Nredraw}).

\begin{figure}[!htbp]
\centering
\includegraphics[width=0.62\linewidth]{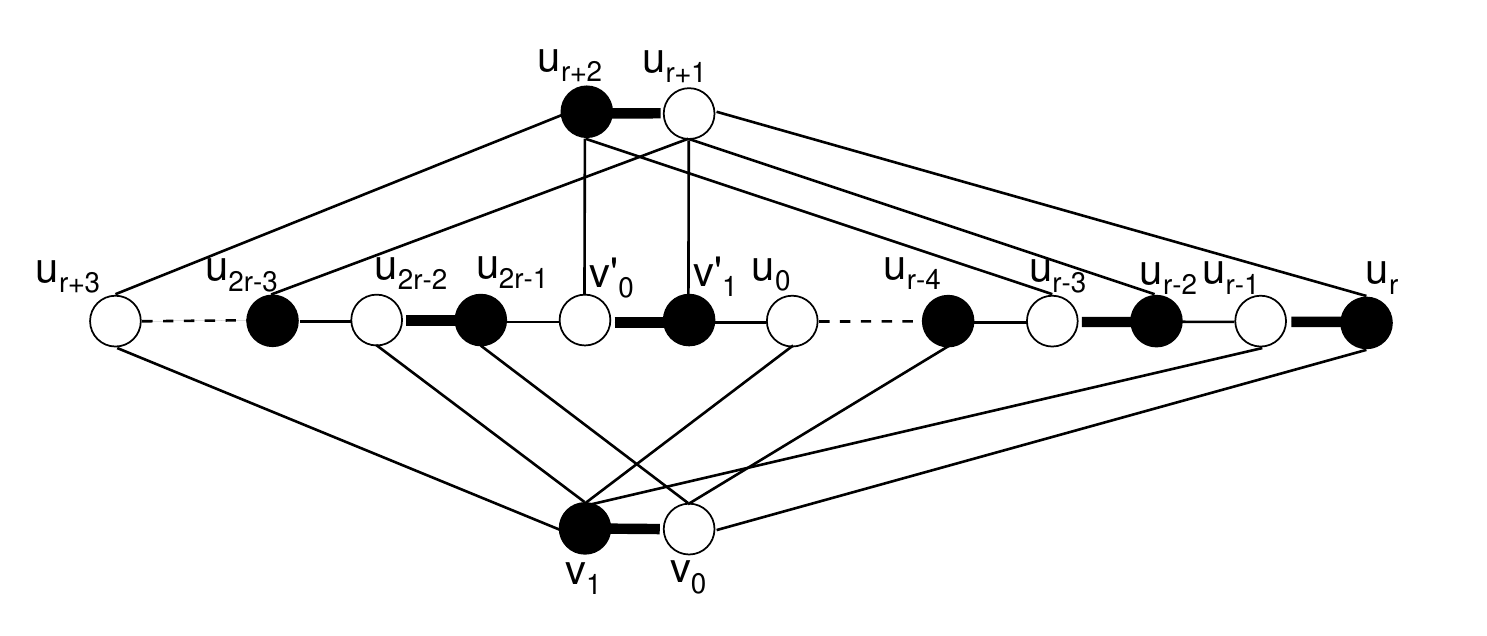}
\caption{Critical pair $u_{r+1}u_{r+2}$ and $v_1v_0$}
\label{figure:N11Nredraw}
\end{figure}

If $r_2=r_1$, then $u_1u_2,\ \ldots,\ u_{r-2}u_{r-1}\in E_1\cap
E_2^\prime$ and $u_{r}u_{r+1},\ \ldots,\ u_{2r-3}u_{2r-2}\in
E_2\cap E_1^\prime$. Then, $u_{r-1}u_{r}$ and $v_0 v_1$ is a
critical pair, with the central path $u_{r+1}Ru_{2r-1}v_0^\prime
v_1^\prime u_0Ru_{r-2}$. For the edges $u_{2i-1}u_{2i}$ with
$(r+3)/2\leq i\leq r-1$, $v_1u_{2i}+$, so we must have
$u_{2i-1}u_{r-1}+$. For the edges $u_{2i-1}u_{2i}$ with $1\leq
i\leq (r-3)/2$, $v_0u_{2i-1}+$, so we must have $u_{2i}u_{r}+$.
For the edge $u_{2r-1}v_0^\prime$ and $v_1^\prime u_0$, we have
$u_{2r-1}v_0+$, $v_0^\prime u_{r}+$, $v_1^\prime u_{r-1}+$ and
$u_0v_1+$. Thus we reach a same config with the case that
$r_1-r_0=1$.

\section{Final Remarks}
Most of the degree sum conditions for Hamilton problems care about
independent vertex sets. In our work, we try to strengthen the
condition of our main theorem, by replacing \textquotedblleft for
every vertex pair $u$ and $v$, where there is not arc from $u$ to
$v$\textquotedblright with \textquotedblleft for every vertex pair
$u$ and $v$\textquotedblright. Naturally, if the former condition
guarantees hamiltonicity without exception, then such a strengthening
brings nothing. But in the case where there are exceptions, we do
find some differences. Let $\mathcal{D}_1^\prime$ be a subset of
$\mathcal{D}_1$, in which $n=m$. Let $\mathcal{D}_3^\prime$ be a
subset of $\mathcal{D}_3$, where $n=1$. We have the following
result.

\begin{theorem} \label{MainTheoremDigraphStrengthen}
Let $D$ be a digraph. If for every vertex pair $u$ and $v$,  we have
$d^+(u)+d^-(v)\geq |D|-1$, then $D$ has a directed Hamilton cycle,
unless $D\in \mathcal{D}_1^\prime$, $\mathcal{D}_2$ or
$\mathcal{D}_3^\prime$, or $D=D_4$.
\end{theorem}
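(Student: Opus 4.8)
The plan is to derive Theorem~\ref{MainTheoremDigraphStrengthen} from the already-proven Theorem~\ref{MainTheoremDigraph} (equivalently Theorem~\ref{MainTheorem}) by observing that the new hypothesis is strictly stronger than the old one. Indeed, requiring $d^+(u)+d^-(v)\geq |D|-1$ for \emph{every} ordered vertex pair $u,v$ is more restrictive than requiring it only for pairs with no arc from $u$ to $v$. Hence any digraph $D$ satisfying the strengthened condition also satisfies the condition of Theorem~\ref{MainTheoremDigraph}, and so $D$ is Hamiltonian \emph{unless} $D\in\mathcal{D}_1$, $\mathcal{D}_2$, or $\mathcal{D}_3$, or $D=D_4$. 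The entire task therefore reduces to checking which members of these four exceptional families continue to violate the new, stronger degree condition: every member that fails it automatically becomes Hamiltonian (or rather, is excluded from being a genuine counterexample), while every member that still satisfies it remains a legitimate exception.

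First I would dispose of $\mathcal{D}_2$ and $D_4$. For these I expect to verify directly that the strengthened all-pairs condition still holds, so they remain exceptions unchanged; this is a routine degree computation using the explicit descriptions in Section~3 (for $\mathcal{D}_2$, the independent set $I_{n+1}$ joined in both directions to an arbitrary $D_2$ on $n$ vertices, and for $D_4$ the single small digraph of Figure~\ref{figure:exception4_D}). Next I would examine $\mathcal{D}_1$, the digraphs obtained by identifying a vertex of $\overleftrightarrow{K}_{n+1}$ with a vertex of $\overleftrightarrow{K}_{m+1}$. Here the point is that when $n\neq m$ the two ``ends'' have different sizes, and one can find an ordered pair $u,v$ (with an arc present, so invisible to the weaker condition) whose out/in-degree sum drops below $|D|-1$; only the balanced case $n=m$ survives, giving exactly $\mathcal{D}_1^\prime$. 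I would make this precise by computing $d^+(u)$ and $d^-(v)$ for $u$ in the smaller complete digraph and $v$ in the larger one and showing the sum equals $|D|-1$ precisely when $n=m$.

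The analogous reduction for $\mathcal{D}_3$ is the step I expect to be the main obstacle, since $\mathcal{D}_3$ has the most intricate structure (two special vertices $v_0,v_1$ each joined in both directions to $\overleftrightarrow{K}_n$, with the arcs $(v_0,v_1)$, $(v_1,v_0)$ optionally present, as in Figure~\ref{figure:exception3_D}). I would carefully compute the out- and in-degrees of $v_0$, $v_1$, and of a typical vertex of the $\overleftrightarrow{K}_n$ block, and test the sum $d^+(u)+d^-(v)\geq |D|-1$ over all ordered pairs, paying special attention to pairs involving $v_0$ and $v_1$ where the optional arcs matter. The claim to establish is that the strengthened condition forces $n=1$, yielding precisely $\mathcal{D}_3^\prime$; for $n\geq 2$ one should exhibit an ordered pair (typically one vertex inside $\overleftrightarrow{K}_n$ and the relevant special vertex) whose degree sum falls short of $|D|-1$.

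Finally I would assemble these observations: combining Theorem~\ref{MainTheoremDigraph} with the facts that $\mathcal{D}_2$ and $D_4$ survive in full, that $\mathcal{D}_1$ survives only as $\mathcal{D}_1^\prime$, and that $\mathcal{D}_3$ survives only as $\mathcal{D}_3^\prime$, gives exactly the exceptional list $\mathcal{D}_1^\prime$, $\mathcal{D}_2$, $\mathcal{D}_3^\prime$, $D_4$ claimed in the theorem. One caveat I would double-check is that every excluded exceptional digraph (those failing the new condition) is genuinely Hamiltonian and not merely ``removed from the list by a technicality''; but since Theorem~\ref{MainTheoremDigraph} already guarantees Hamiltonicity for all digraphs outside its own four families, and the strengthened hypothesis is never weaker, no new non-Hamiltonian digraphs can appear, so the only work is the bookkeeping of which old exceptions still satisfy the stronger inequality.
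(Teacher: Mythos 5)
Your reduction strategy is the right one, and it is exactly what the paper intends (it gives no further proof of this statement in Section 5): the all-pairs hypothesis implies the hypothesis of Theorem \ref{MainTheoremDigraph}, so no new non-Hamiltonian digraphs can arise, and the whole proof amounts to deciding which members of $\mathcal{D}_1$, $\mathcal{D}_2$, $\mathcal{D}_3$ and $\{D_4\}$ still satisfy the stronger condition. Your logical framing, the closing caveat (which you correctly dismiss), and your handling of $\mathcal{D}_2$ and $D_4$ are fine. However, two of your three filtering computations would fail as written. For $\mathcal{D}_1$, the pair you propose to test --- $u$ in the smaller complete digraph, $v$ in the larger --- satisfies $d^+(u)+d^-(v)=n+m=|D|-1$ whether or not $n=m$, so it can never detect the imbalance. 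The violating pairs when $n\neq m$ are two vertices in the \emph{same}, smaller block: for $u,v$ both in $\overleftrightarrow{K}_{n+1}$ minus the identified vertex (with $n<m$), $d^+(u)+d^-(v)=2n<n+m=|D|-1$. Such pairs carry arcs in both directions, which is precisely why the weaker theorem never sees them.

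The more serious problem is $\mathcal{D}_3$, which you yourself flag as the main obstacle. Your description of the family is wrong: $D_3$ in Figure \ref{figure:exception3_D} is not just $\{v_0,v_1\}$; it is a four-vertex digraph containing two further vertices, say $a$ and $b$, of out- and in-degree $2$, which are \emph{not} joined to $\overleftrightarrow{K}_n$ at all (up to relabeling, the arcs of $D_3$ are the directed cycle $v_0\rightarrow a\rightarrow v_1\rightarrow b\rightarrow v_0$ together with arcs in both directions between $a$ and $b$; this can be reconstructed from the bipartite family $\mathcal{G}_3$, whose members have order $2n+8$, or from Case 2.2 in the proof of Theorem \ref{MainTheorem}). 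With the structure you describe, every member with $n\geq 2$ would have the Hamilton cycle $v_0y_1v_1y_2y_3\cdots y_nv_0$ and would satisfy the all-pairs condition for every $n$, so your computation could not possibly force $n=1$; it would instead leave you with a family that is simultaneously ``exceptional'' and Hamiltonian, a contradiction you could not resolve. With the correct structure, the violating pair for $n\geq 2$ is $(a,b)$: these two vertices have arcs in both directions between them (again invisible to the weaker hypothesis), and $d^+(a)+d^-(b)=4<n+3=|D|-1$ as soon as $n\geq 2$, while for $n=1$ every ordered pair satisfies the bound since all out- and in-degrees are at least $2$ and $|D|-1=4$. That computation, not one involving a vertex of $\overleftrightarrow{K}_n$, is what yields $\mathcal{D}_3^\prime$.
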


As a corollary, we can improve the Ore condition as well. Given a
(undirected) graph $G$, if we replace every edge $uv\in E(G)$ with
two arcs $uv$ and $vu$, we have a digraph $D$. Applying Theorem
\ref{MainTheoremDigraph} on $D$, we obtain the following result.

Let $n,m\geq 1$, and $\mathcal{G}_5$ be the set of graphs obtained
by identify one vertex
  of a complete graph $K_{m+1}$ and one vertex of a complete graph
  $K_{n+1}$, where $n,\ m\geq 1$. Let $\mathcal{G}_6$ be the set
  of all graphs obtained by joining every vertex of a graph
  $I_{n+1}$ to every vertex of an arbitrary graph on $n$
  vertices.

\begin{corollary}
Let $G$ be a graph. If for every distinct nonadjacent vertex pair $u$
and $v$, we have $d(u)+d(v)\geq |G|-1$, then $G$ has a Hamilton
cycle, unless $G\in \mathcal{G}_5$, or $G\in \mathcal{G}_6$.
\end{corollary}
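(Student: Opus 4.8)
The plan is to transfer the statement to digraphs and invoke Theorem \ref{MainTheoremDigraph}. Given an undirected graph $G$ satisfying the hypothesis, I form the digraph $D$ by replacing each edge $uv$ with the two arcs $(u,v)$ and $(v,u)$, exactly the construction indicated before the corollary. Then $|D|=|G|$ and every vertex satisfies $d^+(u)=d^-(u)=d(u)$; moreover there is no arc from $u$ to $v$ in $D$ precisely when $u$ and $v$ are distinct and nonadjacent in $G$. Hence the hypothesis $d(u)+d(v)\ge |G|-1$ for nonadjacent pairs is exactly the condition $d^+(u)+d^-(v)\ge |D|-1$ for all ordered pairs with no arc from $u$ to $v$. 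Since $D$ is symmetric, a directed Hamilton cycle of $D$ projects to a Hamilton cycle of $G$ and conversely, so it suffices to decide when $D$ fails to have a directed Hamilton cycle. By Theorem \ref{MainTheoremDigraph}, such a failure forces $D\in\mathcal{D}_1$, $\mathcal{D}_2$ or $\mathcal{D}_3$, or $D=D_4$.

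The second step is to intersect each exceptional family with the class of symmetric digraphs and read off the underlying graph. Every member of $\mathcal{D}_1$ is symmetric, and identifying a vertex of $\overleftrightarrow{K}_{n+1}$ with one of $\overleftrightarrow{K}_{m+1}$ underlies exactly the gluing of $K_{n+1}$ and $K_{m+1}$ at a vertex, that is, the family $\mathcal{G}_5$. A member of $\mathcal{D}_2$, built from $I_{n+1}$ and an arbitrary digraph $D_2$ on $n$ vertices by inserting both arcs between the two parts, is symmetric if and only if $D_2$ itself is symmetric; then $D_2$ underlies an arbitrary graph on $n$ vertices and $D$ underlies the join of $I_{n+1}$ with that graph, which is precisely $\mathcal{G}_6$. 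In both cases the underlying graph is genuinely non-Hamiltonian (a cut vertex in $\mathcal{G}_5$; deleting the $n$ vertices of the arbitrary part of $\mathcal{G}_6$ leaves $n+1$ components), so these yield exactly the two advertised exceptions.

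The remaining, and main, work is to show that $\mathcal{D}_3$ and $D_4$ contribute nothing new, for which I would use that a symmetric digraph has a directed Hamilton cycle if and only if its underlying graph does. For $\mathcal{D}_3$—two vertices $v_0,v_1$ each joined in both directions to a $\overleftrightarrow{K}_n$, with the arcs between $v_0$ and $v_1$ optional—symmetry forces both or neither of $(v_0,v_1)$ and $(v_1,v_0)$ to be present. With both arcs the underlying graph is $K_{n+2}$, and with neither it is $K_{n+2}$ minus one edge; both are Hamiltonian once $n\ge 2$, so the only non-Hamiltonian symmetric instance is $n=1$ with neither arc, whose underlying graph is the path $K_3-e$, already a member of $\mathcal{G}_5$ (glue two copies of $K_2$). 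Finally I would check that $D_4$ is not a symmetric digraph, so no edge-doubled graph can equal it and this case is vacuous. Combining the three steps, a non-Hamiltonian $G$ satisfying the hypothesis must lie in $\mathcal{G}_5\cup\mathcal{G}_6$, which is the assertion.

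I expect the main obstacle to be precisely this $\mathcal{D}_3$/$D_4$ bookkeeping. The structural families of Theorem \ref{MainTheoremDigraph} are only claimed to \emph{contain} the non-Hamiltonian extremal digraphs, so the symmetric members that happen to be Hamiltonian—all of $\mathcal{D}_3$ for $n\ge 2$, and the complete case—must be explicitly discarded rather than reported as exceptions, and one must confirm that $D_4$ cannot arise from edge-doubling. Keeping careful track of which symmetric members actually violate Hamiltonicity, as opposed to merely lying in the family, is where the care is needed.
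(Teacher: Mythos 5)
Your overall route is exactly the paper's: the paper "proves" this corollary in one sentence, by doubling every edge into a digon and invoking Theorem \ref{MainTheoremDigraph}, leaving the reader to intersect the exceptional families with the symmetric digraphs. Your write-up fills in that bookkeeping, and your treatment of $\mathcal{D}_1$ (all symmetric, giving $\mathcal{G}_5$) and of $\mathcal{D}_2$ (symmetric iff the inner digraph $D_2$ is symmetric, giving exactly $\mathcal{G}_6$) is correct, as is the observation that the logical direction needed is: non-Hamiltonian $G$ satisfying the hypothesis forces the doubled digraph into one of the four families.

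There is, however, one concrete error, caused by guessing the content of Figure \ref{figure:exception3_D}: $D_3$ is not just the two vertices $v_0,v_1$. It can be reconstructed from the bipartite family $\mathcal{G}_3$ (whose figure graph $G_3$ has eight vertices, four beyond $w_0,b_0,w_1,b_1$) and from the structure produced in Case 2.2 of the proof: $D_3$ has four vertices, namely $v_0,v_1$ together with two further vertices $x_0,x_1$ carrying the \emph{one-way} arcs $x_0\rightarrow v_0\rightarrow x_1\rightarrow v_1\rightarrow x_0$ (plus both arcs between $x_0$ and $x_1$); only the arcs between $v_0$ and $v_1$ are optional, while the arcs incident with $x_0,x_1$ are fixed. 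Consequently your case analysis of "symmetric members of $\mathcal{D}_3$" (underlying graphs $K_{n+2}$ or $K_{n+2}$ minus an edge, Hamiltonian for $n\ge 2$, with the $n=1$ case landing in $\mathcal{G}_5$) analyzes a different family from the paper's: had $\mathcal{D}_3$ really been your family, its members with $n\ge 2$ would be Hamiltonian digraphs, which is incompatible with their role as extremal examples. Under the paper's actual definition the case is disposed of more simply: no member of $\mathcal{D}_3$ is symmetric at all, because the arc $x_0\rightarrow v_0$ is present while $v_0\rightarrow x_0$ is not, so no edge-doubled graph lies in $\mathcal{D}_3$ and the case is vacuous, exactly as for $D_4$ (which is likewise asymmetric, a check you correctly flag but leave unverified). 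Your final conclusion therefore stands, and your framework (intersect with symmetric digraphs, discard any Hamiltonian members) is robust to this correction, but the specific $\mathcal{D}_3$ step as written does not match the object it is supposed to analyze.
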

A slightly stronger result can be found in \cite{LLF2007}. There
is only one exceptional class, for it considers only $2$-connected
graphs.

  \begin{theorem} (Li, Li and Feng, 2007)
    Let G be a $2$-connected graph with $|G|\geq 3$. If $d(u) + d(v)\geq |G|-1$ for
    every pair of vertices $u$ and $v$ with $d(u, v) = 2$, then $G$ has
    a Hamilton cycle, unless $|G|$ is odd and $G\in \mathcal{G}_6$.
  \end{theorem}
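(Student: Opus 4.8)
The plan is to prove this by a longest-cycle argument in the spirit of Fan-type theorems, rather than by invoking our Corollary. The point is that the present hypothesis only constrains pairs at distance exactly $2$, which is strictly weaker than the Ore-type hypothesis ``$d(u)+d(v)\ge|G|-1$ for \emph{every} nonadjacent pair'' used in the Corollary; so one cannot simply quote the Corollary, and $2$-connectivity must be brought in precisely to manufacture enough distance-$2$ pairs. Accordingly, I would assume $G$ is $2$-connected, satisfies $d(u)+d(v)\ge|G|-1$ whenever $d(u,v)=2$, and has no Hamilton cycle, and then seek a contradiction or the extremal structure. Let $C=v_1v_2\ldots v_cv_1$ be a longest cycle, oriented so that $v_i^{+}=v_{i+1}$. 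Since $C$ is not Hamiltonian, $G-C\ne\emptyset$; fix a component $H$ of $G-C$. Because $G$ is $2$-connected, Menger's theorem guarantees that $H$ meets $C$ in at least two attachment vertices, joined to $H$ by internally disjoint paths.

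Next I would exploit maximality of $C$ locally. Take $x\in H$ with its set $N_C(x)$ of neighbours on $C$ (working with the ends of the two disjoint $H$--$C$ paths when $H$ is nontrivial). For each $v_i\in N_C(x)$, maximality forbids $x$ from being adjacent to the successor $v_i^{+}$ (otherwise $x$ could be inserted to lengthen $C$), while $v_i$ is a common neighbour of $x$ and $v_i^{+}$; hence $d(x,v_i^{+})=2$ and the hypothesis gives $d(x)+d(v_i^{+})\ge|G|-1$. On the other hand, the standard crossing/insertion estimate for a longest cycle caps the same sum: $d(x)+d(v_i^{+})\le|V(C)|=c$, since a chord among the successor set would splice together a cycle longer than $C$. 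As $G-C$ is nonempty we have $c\le|G|-1$, so the chain $|G|-1\le d(x)+d(v_i^{+})\le c\le|G|-1$ collapses to equalities. This forces $c=|G|-1$, so $G-C$ is the single vertex $x$, and the crossing bound is attained for every successor.

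The decisive and most delicate step is the analysis of this equality case, and it is where the exceptional family appears rather than an outright contradiction, owing to the slack introduced by the bound $|G|-1$ in place of $|G|$. Here I would argue that attainment of the crossing bound forces $N_C(x)$ to be an independent alternating set on $C$, that the successor set together with $x$ forms an independent set, and that every edge across the cut between these two sides is present. Counting gives $|N_C(x)|=s$ and an independent side of size $s+1$, so $|G|=2s+1$ is odd and $G$ is the join of an independent $I_{s+1}$ with an arbitrary graph on the $s$ remaining vertices, i.e. $G\in\mathcal{G}_6$. Conversely I would check that any deviation from this rigid pattern — an edge inside the independent side, an unbalanced cut, or a nontrivial $G-C$ — returns enough degree to build a cycle longer than $C$, contradicting maximality. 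The main obstacle is exactly this extremal bookkeeping: because the degree estimates only close to within one unit, all the real work lies in converting that single unit of slack into the precise description of $\mathcal{G}_6$ while excluding every other equality configuration. Finally, the exception is unavoidable: deleting the $s$-vertex side of any $G\in\mathcal{G}_6$ leaves $s+1$ isolated vertices, more components than the size of the deleted set, which no Hamiltonian graph permits.
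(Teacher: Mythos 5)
This theorem is not proved in the paper at all: it is quoted from Li, Li and Feng (2007) in the Final Remarks as a point of comparison with the paper's Corollary, so your proposal can only be judged on its own merits, and judged that way it has a genuine gap at its pivotal step. The inequality $d(x)+d(v_i^{+})\le |V(C)|=c$ is not the standard crossing estimate and is false in general: it presupposes that all neighbours of $x$ and of $v_i^{+}$ lie on $C$. If the component $H$ of $G-C$ containing $x$ has more than one vertex, then $d(x)$ also counts neighbours inside $H$; and $v_i^{+}$, while it cannot have neighbours in $H$ (that would lengthen $C$), may well have neighbours in \emph{other} components of $G-C$. What the disjointness of $N_C(x)^{+}$ and $N_C(v_i^{+})$ actually yields, after adding these off-cycle contributions, is $d(x)+d(v_i^{+})\le c+|V(G-C)|-1=|G|-1$. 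The chain $|G|-1\le d(x)+d(v_i^{+})\le |G|-1$ still collapses to equality, but equality no longer forces $c=|G|-1$ or $G-C=\{x\}$; it only forces $x$ to be joined to all of $H\setminus\{x\}$ and $v_i^{+}$ to be joined to all of $\bigl(V(C)\setminus S\bigr)\cup\bigl(V(G)\setminus (V(C)\cup V(H))\bigr)$, where $S=N_C(x)^{+}$. Eliminating $|H|\ge 2$ and extra components is exactly where $2$-connectivity and further cycle-exchange arguments must do real work; your write-up obtains $G-C=\{x\}$ from a bound that is only valid once $G-C=\{x\}$ is known, so the step is circular.

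The second gap is in the equality analysis that you yourself call ``decisive.'' Granting $G-C=\{x\}$, equality does give $N(v^{+})=V(C)\setminus S$ for every $v\in N(x)$ and the independence of $S\cup\{x\}$, but the balance $|V(C)\setminus S|=s$, hence $c=2s$ and $|G|=2s+1$ odd, does not follow from ``counting.'' Indeed, applying the hypothesis to a distance-$2$ pair of successors $v_i^{+},v_j^{+}$ gives $2(c-s)\ge |G|-1=c$, i.e. $c\ge 2s$ --- the wrong direction; the configuration $c>2s$ must be excluded by a further argument. One workable route: if some arc of $C$ between a successor and the next neighbour of $x$ has an internal vertex $w$, use an edge from a successor to $w$ to exchange $x$ into the cycle and expel the successor $v_i^{+}$, obtaining a second longest cycle $C'$; re-running the entire equality analysis on $C'$ with the expelled vertex forces the neighbourhood of each vertex of $S\setminus\{v_i^{+}\}$ to equal both $V(C)\setminus S$ and $N(x)$, whence the set $V(C)\setminus(S\cup N(x))$ is empty. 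Without something of this kind the exceptional family $\mathcal{G}_6$ is not actually identified. Your skeleton (longest cycle, successor set, equality analysis, and the correct observation that the Corollary cannot simply be quoted because only distance-$2$ pairs are constrained) is the right one, but the two steps above constitute the actual content of the theorem and are missing.
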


  Stimulated by above results, we conjecture that the lower bound of degree sum in the following result can be reduced by
  $1$, with some exceptional cases.

  \begin{theorem} (Bang-Jensen, Gutin and Li, 1996 \cite{BGL1996})
  Let $D$ be a strong digraph such that for every pair of dominating
  non-adjacent and every pair of dominated non-adjacent vertices
  $\{u, v\}$, we have $min\{d^+(u)+d^-(v), d^-(u)+d^+(v)\}\geq
  |D|$. Then $D$ has a directed Hamilton cycle.
  \end{theorem}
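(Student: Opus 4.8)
The plan is to argue by contradiction through a longest directed cycle, using the classical insertion technique as the engine and invoking the degree hypothesis exactly at the dominated and dominating pairs that such a cycle forces into existence. Write $n=|D|$, assume $D$ is strong, satisfies the hypothesis, and has no Hamilton cycle, and let $C=u_0u_1\ldots u_{c-1}u_0$ be a longest directed cycle, so that $1\le c<n$ and $H=D-V(C)$ is nonempty. Strong connectivity guarantees arcs in both directions between $V(C)$ and $V(H)$.

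First I would record the insertion inequalities that maximality of $C$ forces. If $x\in V(H)$ and both $(u_i,x)$ and $(x,u_{i+1})$ were arcs, then $x$ could be spliced between $u_i$ and $u_{i+1}$ to lengthen $C$; hence for each index $i$ at most one of these holds, giving (number of out-neighbours of $x$ on $C$) plus (number of in-neighbours of $x$ on $C$) at most $c$. More generally, taking a maximal path $P=x_1\ldots x_p$ inside $H$ and the same splicing with $(u_i,x_1)$ and $(x_p,u_{i+1})$, one gets a bound on the in-neighbours of $x_1$ plus the out-neighbours of $x_p$ along $C$, while maximality of $P$ confines all in-neighbours of $x_1$ and all out-neighbours of $x_p$ to $V(C)\cup V(P)$.

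Next I would manufacture the non-adjacent dominated and dominating pairs. If $x\in V(H)$ sends arcs to two cycle vertices $u_i,u_j$, then $\{u_i,u_j\}$ has the common in-neighbour $x$ and is \emph{dominated}; if instead $u_i,u_j$ both send arcs to $x$ it is \emph{dominating}; common neighbours on $C$ produce the analogous pairs inside $H$. For any such pair that is also non-adjacent the hypothesis yields $\min\{d^+(u_i)+d^-(u_j),\,d^-(u_i)+d^+(u_j)\}\ge n$. The heart of the argument is to choose the external vertex (or the path endpoints) and the pair so that the arcs counted by this lower bound are, via the insertion inequalities and the successor/predecessor structure along $C$, confined to at most $c$ positions; summing the resulting lower bounds against these upper bounds should force $c\ge n$, the desired contradiction.

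The hardest part will be this last counting step, and within it the twin requirements that each pair fed to the hypothesis be \emph{simultaneously} non-adjacent and certifiably dominated or dominating --- the condition is silent on pairs lacking both a common out-neighbour and a common in-neighbour --- and that the asymmetric term $\min\{d^+(u)+d^-(v),\,d^-(u)+d^+(v)\}$ be handled by orienting the whole analysis so that it is precisely the smaller mixed sum that gets bounded below by $n$. Promoting the single-vertex insertion picture to genuine paths in $H$, and organising the case analysis by how the neighbourhoods of the chosen vertices distribute around $C$, is where the technical weight concentrates; a preliminary reduction establishing a spanning cycle factor (via the deficiency form of K\"onig's theorem, whose obstruction again surfaces as non-adjacent dominated or dominating pairs) can be used to start the count from controlled structure rather than from a single long cycle.
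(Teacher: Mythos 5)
This theorem is not proved in the paper at all: it is quoted in the Final Remarks from the cited reference of Bang-Jensen, Gutin and Li as motivation for a conjecture, so there is no ``paper proof'' to compare against, and your attempt must stand on its own. It does not. What you have written is a strategy outline in which the decisive step --- ``summing the resulting lower bounds against these upper bounds should force $c\ge n$'' --- is precisely the theorem's entire difficulty, and it is left unexecuted. The concrete gap is the one you yourself flag and then set aside: the dominated and dominating pairs that the longest-cycle/insertion picture manufactures (two out-neighbours of an external vertex, two in-neighbours, endpoints of paths in $H$, etc.) have no reason to be non-adjacent, and for adjacent pairs the hypothesis is completely silent, so no degree inequality is available to feed into the count. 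Every known proof of this result lives exactly in that case: one must use the adjacency arcs themselves, together with a genuine \emph{multi-insertion} argument (inserting whole paths of $D-C$ vertex-group by vertex-group, not single vertices), to either lengthen $C$ or propagate enough structure to reach a contradiction. Your sketch promotes single-vertex insertion to paths only nominally, and offers no mechanism for the adjacent-pair case; the same objection defeats the proposed K\"onig/cycle-factor preliminary reduction, since the deficient set there also yields dominating pairs that may well be adjacent.

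Two smaller points. First, your concern about ``orienting the whole analysis so that it is precisely the smaller mixed sum that gets bounded below'' is vacuous: the hypothesis $\min\{d^+(u)+d^-(v),\,d^-(u)+d^+(v)\}\ge |D|$ says that \emph{both} mixed sums are at least $|D|$, so either one may be used at any point without any orientation bookkeeping. Second, the insertion inequality you record (at most one of $(u_i,x)$, $(x,u_{i+1})$ per position) bounds $d^+_C(x)+d^-_C(x)$ by $c$, but the hypothesis never bounds the degree of a single vertex, so this inequality cannot be played against the degree condition directly; it only becomes useful after the non-adjacent dominated/dominating pairs have been produced, which returns you to the unsolved case above. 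As it stands the proposal identifies the right objects but proves nothing.
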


\section*{Acknowledgments}

We thank the anonymous reviewers for their careful reading of the paper, and valuable suggestions that have helped to improve the quality of the paper greatly. In particular, the proof of Claim \ref{claim:G_1 complete} and Claim \ref{claim:G_1 G_2 disjoint} is significantly shortened and improved according to the suggestions from one of the reviewers. We also thank Professor Gregory Gutin for sending us a hard copy of reference \cite{Darbinyan1986}.

\end{document}